\documentclass[12pt]{amsart}
\usepackage{amsmath, amssymb}
\usepackage{mathtools, a4wide, mathrsfs, bm}
\usepackage{paralist}
\usepackage{xcolor}
\usepackage[T1]{fontenc}

\usepackage[colorlinks=true,linkcolor=blue,citecolor=blue,pdfpagelabels=false]{hyperref}
\pdfstringdefDisableCommands{\let\uppercase\@firstofone}

\numberwithin{equation}{section}

\raggedbottom

\setlength{\parskip}{0.2cm}

\theoremstyle{plain}
\newtheorem{thm}{Theorem}[section]
\newtheorem{lem}[thm]{Lemma}
\newtheorem{prop}[thm]{Proposition}
\newtheorem{cor}[thm]{Corollary}
\newcommand{\thmref}[1]{Theorem~\ref{#1}}
\newcommand{\lemref}[1]{Lemma~\ref{#1}}
\newcommand{\propref}[1]{Proposition~\ref{#1}}
\newcommand{\corref}[1]{Corollary~\ref{#1}}

\theoremstyle{definition}
\newtheorem{rmk}[thm]{Remark}
\newtheorem{defi}[thm]{Definition}

\newcommand{\rmkref}[1]{Remark~\ref{#1}}

\newcommand{\mbb}{\mathbb}
\newcommand{\x}{\textbf}
\newcommand{\mf}{\mathbf}
\newcommand{\q}{\quad}
\newcommand{\qq}{\qquad}

\newcommand{\mc}{\mathcal}
\newcommand{\mk}{\mathfrak}
\newcommand{\mrm}{\mathrm}
\newcommand{\sltwo}{\mrm{SL}(2, \mf Z)}
\newcommand{\spn}{\mrm{Sp}(n,\mf Z)}
\newcommand{\spth}{\mrm{Sp}(3,\mf Z)}

\newcommand{\spt}{\mrm{Sp}(3, \mf Z)}

\newcommand{\sltwor}{\mrm{SL}(2, \mf R)}

\newcommand{\spne}{\mrm{Sp}(n-1,{\mf R})}

\newcommand{\spnr}{\mrm{Sp}(n, \mf R)}
\newcommand{\glnz}{\mrm{GL}(n, \mf Z)}
\newcommand{\glnc}{\mrm{GL}(n, \mf C)}
\newcommand{\slnz}{\mrm{SL}(n, \mf Z)}

\newcommand{\ut}{\underset}

\newcommand{\0}{\underline{0}}

\newcommand{\z}{\mathfrak z}
\newcommand{\mN}{M_{\kappa}(\Gamma(N'))}

\newcommand{\wm}{\widetilde{M}}
\newcommand{\wnu}{\widetilde{\nu}}
\newcommand{\wmu}{\widetilde{\mu}}
\newcommand{\n}{\nonumber}
\newcommand{\te}{\textbullet \, }

\newcommand{\tr}{\mathrm{tr}}

\newcommand{\dis}{\mrm{disc}}

\newcommand*{\QEDB}{\hfill\ensuremath{\square}}

\begin{document}

\title[Fundamental Fourier coefficients]{On fundamental Fourier coefficients of Siegel modular forms}

\author{Siegfried B\"ocherer}
\address{Institut f\"ur Mathematik\\
Universit\"at Mannheim\\
68131 Mannheim (Germany).}
\email{boecherer@math.uni-mannheim.de}

\author{Soumya Das}
\address{Department of Mathematics\\ 
Indian Institute of Science\\ 
Bangalore -- 560012, India \\
and Humboldt Fellow, Universit\"{a}t Mannheim\\
68131 Mannheim (Germany).}
\email{soumya@iisc.ac.in, sdas@mail.uni-mannheim.de}

\date{}
\subjclass[2000]{Primary 11F30, 11F46, Secondary 11F50} 
\keywords{Fourier coefficients, Siegel modular forms, fundamental discriminant, vector-valued, nonvanishing}

\begin{abstract}
We prove that if $F$ is a non-zero (possibly non-cuspidal) vector-valued Siegel modular form of any degree, then it has infinitely many non-zero Fourier coefficients which are indexed by half-integral matrices having odd, square-free (and thus fundamental) discriminant. The proof uses an induction argument in the setting of vector-valued modular forms. Further, as an application of a variant of our result and complementing the work of A. Pollack, we show how to obtain an unconditional proof of the functional equation of the spinor $L$-function of a holomorphic cuspidal Siegel eigenform of degree $3$ and level $1$.
\end{abstract}

\maketitle 

\section{Introduction}
Fourier coefficients of Siegel modular forms have been objects of continued interest over the years. It is a very useful fact, not only theoretically, but also in computation, that such a form $F \in M^n_k$,  where $M^n_k$ (resp. $S^n_k$) being the space of (resp. cuspidal) Siegel modular forms on $\spn$ of scalar weight $k$, is determined by finitely many of its Fourier coefficients $a_F(T)$, described uniformly in terms of the weight $k$ and $n$. Here and henceforth, for $F \in M^n_k$, we write the Fourier expansion of $F$ as
\[ F(Z) = \sum_{T \in \Lambda_n} a_F(T) e(TZ),  \]
and $\Lambda_n$ is the set of all half-integral positive semi-definite matrices (see section~\ref{prelim}) of size $n$ and $e(TZ):= \exp (2 \pi i \tr TZ)$. Such a result is known in the literature as `Sturm'-bound, see \cite{Fr, Kl}, even though this result was known from the works of Hecke or Maa{\ss}. For a more recent version of this in the context of the determination of elliptic cusp forms by `square-free' Fourier coefficients, see \cite{AD}.

Equally important are results concerning the determination of a Siegel modular form $F$ by Fourier coefficients $a_F(T)$ supported on $T \in \Lambda_n^+$ (which are positive-definite members of $\Lambda_n$) with the discriminant of $2T$ (which we denote by $\dis(2T)$) varying in an arithmetically interesting subset $\mc{S}$ of natural numbers, e.g., square-free numbers or fundamental discriminants. For the many works along this line of research, we refer the reader to the introductions in \cite{AD, saha}. Let us just note here that in the case of newforms of half-integral weights (via Waldspurger's formula) such a result is equivalent to the determination of these forms by the twisted $L$-functions of their Shimura lifts.

When the set $\mc{S}$ as above consists of all fundamental discriminants, A. Saha \cite{saha} proved an affirmative result on $S^2_k$; this result has applications to the representation theory of automorphic forms, see the discussion in \cite[Introduction]{saha}. It is of course desirable to generalise the results of \cite{saha} to higher degree Siegel cusp forms (including vector-valued modular forms), and also to include the space of Eisenstein series. In fact, these aspects were mentioned as `\textit{difficult open}' problems in \cite[remark~2.6,~2.7]{saha}. While the latter (in degree $2$) was addressed to in \cite[Prop.~7.7]{BD2} by the authors, in this paper, we settle both of these questions in the most general case (for full level), in particular including vector-valued modular forms. We note that one of the most natural settings for the problem at hand (also noted in \cite[remark~2.7]{saha}), is to consider the set $\mc{S}$ to be all those lattices $2T$ which are maximal in the set of even integral lattices of a given rank. It is this viewpoint that we would consider in this paper. 

Let us now state our main result. Let $\rho$ be a polynomial, not necessarily irreducible representation of $ \mrm{GL}(n,\mf C)$. Denote by $M^n_\rho$ the vector space of holomorphic vector-valued Siegel modular forms on $\spn$ with automorphy factor $\rho$ (see section~\ref{prelim} for more details) with `determinantal' weight $k(\rho) $ (see section~\ref{prelim} for the definition). We need one more piece of notation. Let $M \in \Lambda_n^+$ and denote by $d_M$ its `absolute discriminant' (i.e., ignoring the usual sign), defined by 
\begin{equation*}
 d_M := |\mrm{disc}(2M)|= \begin{cases}    \det(2M) \text{ if } n \text{ is even,} \\
\frac{1}{2} \det(2M) \text{ if } n \text{ is odd}. \end{cases}
\end{equation*}
Further for $X \ge 1$, put
\[  {\mk S}_F (X) := \{ d \leq X, \,  d \, \mbox{\rm{odd, square-free}} \,|  d_T=d \, \mbox{\rm{ for some }} T \mbox{\rm{ and }}  a_F(T) \neq 0 \}.  \]

Define the function $\varrho(n)$ by
\begin{equation}
\varrho(n) = \begin{cases}  3/2 &\text{ if } n \text{ is even}, \\
1 &\text{ if } n \text{ is odd}.
  \end{cases}
\end{equation}
\begin{thm} \label{mainthm}
Let $F \in M^n_\rho$ be non-zero and $k(\rho) - \frac{n}{2} \ge \varrho(n) $. When $n$ is even, assume that $F$ is cuspidal. Then there exist infinitely many $\glnz$-inequivalent $T \in \Lambda_n^+$ such that $d_T$ is odd and square-free, and $a_F(T)\neq 0$. Moreover, the following stronger quantitative result holds: for any given $\epsilon>0$,
\begin{align}
\# {\mk S}_F (X)  \gg\begin{cases}   X \cdot (\log X)^{-1/2}  &\text{ if } n \text{ is odd} ,  \\ 
  X^{5/8 - \epsilon} &\text{ if } n \text{ is even and } F \text{ is cuspidal,}  \\
  X &\text{ if } F \text{ is scalar-valued, non-cuspidal of weight } k, \\
   & \q k \text{ is even, } k>n+1 \text{ and } n \text{ is odd. }  \\
\end{cases}
\end{align}
Here the implied constant depends only on $F$ and $\epsilon$.
\end{thm}

Here we call $S,T \in \Lambda_n^+$ equivalent under $\glnz$ if therre exists $U \in \glnz$ such that $T=U^t S U$. 
For a version of \thmref{mainthm} where we count prime discriminants, see \thmref{oddthm}.

In particular, taking $\rho = \det^k$ (for $k - \frac{n}{2} \ge \varrho(n) $) \thmref{mainthm} applies to scalar-valued Siegel modular forms of weight $k$. For more information about the lower bound on the quantity $k(\rho)$, see \rmkref{wts}. Since the $T$ appearing in \thmref{mainthm} arise from maximal (even) lattices, the statement of \thmref{mainthm} also holds a fortiori for maximal lattices.

We add here that the different lower bounds for the quantity $\# \mk S_F(X)$ in \thmref{mainthm} depending on the parity of the degree $n$ occur due to our different treatment
of these cases. The first and last lower bounds $\# \mk S_F(X)$ emanate from an argument involving multiplicity-one for integral weights whereas the second lower bound relies on the existence of unconditional bounds on Fourier coefficients of  half-integral elliptic cusp forms. Let us mention that if $n$ is odd, we encounter integral weights, and half-integral weights otherwise. The reader may note that when $n$ is even, we do not have a result on non-cusp forms. This is due to some complications arising from half-integral weights.

Our proof uses induction on the degree $n$, with the 
Fourier-Jacobi expansion as a main tool. 
The proof clearly decomposes into a preparatory part (called \textit{part~A}) of algebraic and number theoretic
considerations and an analytic part (called \textit{part~B}), 
where non-vanishing properties of Fourier coefficients for elliptic modular forms of half-integral or integral weights via some version of the Rankin-Selberg method play an essential role. 

\subsection*{Let us now briefly explain the steps of \textit{part~A}}
The main aim of this part is to reduce 
the question to a problem on certain elliptic modular forms. The results in this part should hold more generally over the classical tube domains I-IV (as in e.g. \cite{Yam}), but we do not pursue it here mainly because such a treatment may obscure the technical points of the paper. We may return to this point in a future work.

\subsubsection*{Step~1} This step assures the existence of a non-vanishing Jacobi coefficient $\varphi_T= \varphi_T(\tau, \mk z)$ ($T \in \Lambda_{n-1}^+, \tau \in \mf H, \mk z \in \mf C^{(1,n-1)}$)
of $F$ with discriminant of $T$ being odd and square-free.
To prove this, we consider the Taylor expansion of $F$ with respect to $\mk z$ around the origin. Then the non-vanishing Taylor coefficients of the lowest homogeneous degree
give rise to a possibly vector-valued modular form of degree $n-1$. By induction, this modular form of degree $n-1$ has a nonvanishing Fourier coefficient indexed by $T\in \Lambda^+_{n-1} $ with $d_T$ being odd and square-free.
For the original modular form $F$ of degree $n$ this assures the existence of a nonvanishing $\varphi_T$ with $T$ as above.
We mention here that in order to make the induction work, we have to deal with vector-valued modular forms from the beginning, even if we started from scalar-valued modular forms (see \propref{vec}).

\subsubsection*{Step~2} This step assures that the (possibly vector-valued) non-vanishing Fourier-Jacobi coefficient $\varphi_T$ from \textit{Step~1} has a non-vanishing component $\varphi_T^{(r)}$ which is actually
a scalar valued Jacobi form. It is this Jacobi form which we will focus on throughout the rest of our proof.

\subsubsection*{Step~3} This step is concerned with  the theta expansion of $\varphi_T^{(r)}$: we show that there exist at least one non-zero theta-component $h_\mu= h_\mu(\tau)$ ($\mu \in \mf Z^{n-1} / (2T) \mf Z^{n-1}$) of $\varphi_T^{(r)}$ such that $T^{-1}[\mu/2]$ has the highest possible denominator (essentially equal to $d_T$); we call such $\mu$ `primitive'. This result (\propref{primh}) is of independent interest, and is an intrinsic result in the theory of Jacobi forms. We finish \textit{Step~3} by setting up the desired non-vanishing properties for the Fourier coefficients of such $h_{\mu}$, which follows from \textit{part~B} (discussed below) to prove our theorem. Here we encounter both integral and half-integral weights according to the parity of $n \bmod 2$. We also have to take special care of the prime $p=2$ during the induction step (while passing from odd to even degrees), and ensure we do not get unnecessary high powers of $2$, see subsection~\ref{2adjust}.

\subsection*{Let us now briefly explain the steps of \textit{part~B}}

Our induction steps in \textit{Part~A} do not `see' whether the modular form is cuspidal or not; however in this part, such a distinction becomes prominent. Moreover let us note that \textit{Part~B} actually serves two purposes:

(i) first, it covers the base case $n=1$ of the induction procedure; i.e., proves \thmref{mainthm} when $f \in M^1_k$ with $k \in \mf N$. If $f \in S^1_k$, then such a result is already known from \cite[Thm.~6 and Prop.~5.8]{AD}. The extension to $M^1_k$ is trivial.

(ii) second, and more importantly, it helps to glue the non-zero `square-free' Fourier coefficients of $h_\mu$ (see section~\ref{2adjust}) with $T \in \Lambda_{n-1}^+$ to obtain some $\mc T \in \Lambda_n^+$ which is also `square-free'. The treatment of these $h_\mu$'s however leads us to both integral and half integral weights over the principal congruence subgroups. Thus in the discussion below, we focus only on (ii). We give two approaches for the analytic part, and call them Method~1 (see section~\ref{ell-icusp},~\ref{ell-hcusp}) and Method~2 (see section~\ref{ell-ncusp}). We feel that both the methods have their own advantages and limitations. These are discussed below.

\subsubsection*{Step~1} The analytic part first analyses the Fourier expansion of the degree one cusp forms $h_{\mu}$ for primitive $\mu$ from \textit{Step~3} of \textit{part~A} discussed above. Here some analytic number theory of modular forms comes in, and we essentially adapt an argument from \cite{AD, saha} using either a classical Rankin-Selberg method or a `smoothed' version (Method~1) in the case of half-integral weights, to the groups $\Gamma_1(N)$. The primitiveness of $\mu$ is crucial here. This method has the advantage that it holds uniformly for both integral and half-integral weight cusp forms, and does not depend on any multiplicity-one result. See \thmref{partb-c}. In the sequel, we would only use the result for the half-integral weight cusp forms, as for those with integral weights, we prove a \textsl{better} quantitative result (see \thmref{partb-m}), which relies on multiplicity-one (Method~2). The details can be found in sections~\ref{ell-icusp},~\ref{ell-hcusp}.

\subsubsection*{Step~2} The remaining step is to treat non-cusp forms. When $n=1$, we are reduced to usual elliptic modular forms; for which we present a new method (Method~2), which is actually robust enough to deal with both cusp and non-cusp forms! See section~\ref{ell-ncusp} for the details. Let us only mention here that this method crucially relies on the multiplicity-one for the newspaces and applies only for integral weights. We therefore assume here that $n$ is odd so that the weight of $h_\mu$ is integral.
But the quality of the quantitative result is better than what can be obtained by Method~1. See \thmref{partb-m}. 

We finally combine the main results from these steps and finish the proof at the end of section~\ref{anaB}, presented in subsection~\ref{concl}. 

To put things into perspective, let us mention that in degree $2$ our proof looks somewhat similar to that in \cite{saha} (here the setting is that of scalar-valued cusp forms) in that we also reduce the question to a suitable Jacobi cusp form, say $\phi$. However there are quite a few interesting differences. 

(i) First, instead of using the Eichler-Zagier map to reduce the question further to half-integral weight elliptic modular forms, we work directly with any of the `primitive' theta components of $\phi$ (i.e., those theta components $h_\mu$ for which $(\mu, 4m)=1$, where $m=$ index of $\phi$). These $h_\mu$ automatically have Fourier expansion supported away from the level, so the analytic treatment becomes easier (cf. \cite[Prop.~5.1]{AD}, \cite[Prop.~3.7]{saha}). More importantly, these primitive theta components are crucial for us since we are led to deal with levels which are squares, and these levels does not satisfy the conditions of \cite[Thm.~2]{AD} or \cite[Thm.~2]{saha}.

(ii) Second, our induction procedure only allows for the index $m$ to be square-free, whereas in \cite{saha} one could take $m$ to be an odd prime. This is not serious when $n=2$, but for higher degrees this is a non-trivial point; it may not be possible to choose a non-zero Fourier-Jacobi coefficient $\varphi_T$ with $T \in \Lambda^+_{n-1}$ and $d_T$ a prime, see \rmkref{p}. However we show in \corref{nton1} that one can always choose such a $\varphi_T \ne 0$ with $d_T$ odd and square-free.

(iii) Third, by choosing $m$ (sticking to $n=2$ for illustration) odd and square-free and invoking \propref{primh}, we avoid the subtlety of the injectiveness (this was a non-trivial difficulty in \cite{AD}) of the Eichler-Zagier map. This injectiveness property ensures smooth passage from Jacobi forms to elliptic modular forms, and is known only when $m$ is a prime and $n=2$. For degrees $n \ge 3$ the theory of Eichler-Zagier maps is not well-developed. We circumvent this by using what we call the `primitive' theta components of $\varphi_T$.
Moreover, we believe that this method should work with suitable modifications for other kinds of modular forms, e.g., Hermitian modular forms.

Concerning an important application of our main result, let us recall the recent work of A. Pollack \cite{Pol}, where a meromorphic continuation of the degree $8$ spinor $L$-function $Z_F(s)$ attached to a Siegel Hecke eigenform $F$ on $\spth$ was proved. Further it was shown that it's functional equation follows under the assumption of nonvanishing of some Fourier coefficient $a_F(T)$ with $T$ corresponding to a maximal order in a quaternion algebra over $\mf Q$ ramified at $\infty$. In the last section of the paper we show, as an application of a variant of our main result, how to remove the aforementioned assumption to get an unconditional result, which may be stated as follows.

Let $\Lambda_F(s)$ denote the completed spinor $L$-function attached to the Hecke eigenform $F$ (see e.g., \cite[p.~2]{Pol} for the description of the Gamma factors).

\begin{thm} \label{poll}
Let $F$ be a non-zero Siegel cuspidal eigenform form on the group $\spt$ of weight $k \ge 3$. Then the spinor $L$-function $Z_F(s)$ attached to $F$ has a meromorphic continuation to $\mf C$, is bounded in vertical strips and satisfies the functional equation $\Lambda_F(s) = \Lambda_F(1-s)$.
\end{thm}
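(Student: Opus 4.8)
The plan is to combine Pollack's meromorphic continuation result for $Z_F(s)$ with a nonvanishing statement for a suitably restricted family of Fourier coefficients of $F$, supplied by a variant of \thmref{mainthm}. Pollack \cite{Pol} constructs the spinor $L$-function of $F \in S^3_k$ via an integral representation (a Rankin--Selberg type integral unfolding against a Fourier coefficient of $F$), which already yields meromorphic continuation and boundedness in vertical strips; the functional equation, however, is only obtained up to the hypothesis that $a_F(T) \ne 0$ for some $T \in \Lambda_3^+$ with $2T$ corresponding (via the half-integral matrix $\leftrightarrow$ ternary quadratic form dictionary) to a \emph{maximal} order in the definite quaternion algebra $B_\infty/\mf Q$ ramified exactly at $\infty$ and one finite prime, or more precisely so that the associated quadratic form has the right local invariants. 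So the whole task reduces to establishing this nonvanishing unconditionally.

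First I would isolate exactly which arithmetic condition on $T$ Pollack needs: $T \in \Lambda_3^+$ of odd square-free (hence fundamental) discriminant with the correct Hasse--Witt/genus invariants so that $2T$ is the Gram matrix of a maximal order in $B_\infty$. Since $n=3$ is odd, $d_T = \tfrac12\det(2T)$, and \thmref{mainthm} (in the cuspidal odd-degree case) already guarantees infinitely many $T \in \Lambda_3^+$ with $d_T$ odd and square-free and $a_F(T)\ne 0$, with the strong count $\gg X^{1-\epsilon}$. The point is that the inductive machinery of \textit{part~A}, which works with maximal even lattices throughout, actually produces $T$ whose genus is pinned down: oddness and square-freeness of the discriminant force $2T$ to be a maximal lattice, and in dimension $3$ a maximal ternary even lattice of given square-free determinant is essentially rigid up to genus, matching a maximal quaternion order. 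Thus I would phrase the needed input as a \emph{variant} of \thmref{mainthm}: among the infinitely many $T$ with $a_F(T)\ne 0$ and $d_T$ odd square-free, one can further arrange the finitely many local conditions at the relevant primes (in particular at $p=2$, using the $2$-adic adjustment of subsection~\ref{2adjust} to avoid extra powers of $2$) so that $2T$ is a maximal order in $B_\infty$ ramified at one odd prime. Feeding any such single $T$ into Pollack's integral representation makes his archimedean and nonarchimedean local integrals nonzero, so the global integral is not identically zero, and the functional equation $\Lambda_F(s) = \Lambda_F(1-s)$ drops out of his formalism.

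In order: (1) recall Pollack's theorem, extracting the precise maximality/ramification hypothesis on $T$ and the statement that everything except the functional equation is already unconditional; (2) state and prove the variant of \thmref{mainthm} needed — apply the degree-$3$ cuspidal case to get $d_T$ odd square-free with $a_F(T) \ne 0$ infinitely often, then refine at the finitely many bad primes so the genus of $2T$ is the maximal-order genus, invoking the structure of maximal ternary lattices and the $p=2$ care from subsection~\ref{2adjust}; (3) substitute this $T$ into the integral representation, note the local integrals are nonzero (this is essentially in \cite{Pol}, or a short local computation), conclude the global zeta integral is nonzero, hence the functional equation holds; (4) assemble with the already-known continuation and boundedness to state \thmref{poll}.

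The main obstacle is step (2): matching the rather coarse output of the induction (``$d_T$ odd and square-free'') with the sharper requirement that $2T$ lie in the specific genus of a maximal quaternion order ramified at $\infty$ and one odd prime, i.e., controlling \emph{all} the local invariants of the ternary form $2T$ simultaneously, not just its discriminant. This is where one must exploit that odd square-free discriminant already forces maximality, so that only the choice of the ramified odd prime (equivalently, a Legendre-symbol type condition) and the behaviour at $2$ remain free — and these can be handled by showing the nonvanishing Fourier coefficients produced by the induction are not confined to a single such genus. If, as the authors suggest, this refinement cannot be pushed through for an arbitrary eigenform, one would fall back on the weaker but still sufficient assertion that \emph{some} maximal $T$ works, which is exactly what \thmref{mainthm} with the ``maximal lattices'' remark already delivers; I would use that as the safety net so that \thmref{poll} is unconditional regardless.
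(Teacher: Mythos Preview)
Your overall architecture is correct: the whole point is to supply Pollack with a single $T \in \Lambda_3^+$ such that $a_F(T) \neq 0$ and $T$ corresponds to a \emph{maximal order} in a definite quaternion algebra, after which his integral representation yields the functional equation. The gap is in how you propose to produce such a $T$.

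You repeatedly invoke the implication ``$d_T$ odd and square-free $\Rightarrow$ $2T$ is a maximal even lattice'' (which is true and is exactly the content of the remark after \thmref{mainthm}) and then slide to ``$\Rightarrow$ the associated quaternion order is maximal''. These are not the same statement, and the second does not follow from the first. Under the ternary-form/quaternion-order dictionary the reduced discriminant of the order equals $d_T$, while the discriminant of the ambient definite algebra must be a product of an \emph{odd} number of finite primes. So if $d_T = p_1 p_2$ with two odd primes, the order $\mathcal{O}_T$ sits inside an algebra of discriminant $p_1$ or $p_2$ and is an Eichler order of level $p_2$ (resp.\ $p_1$), hence \emph{not} maximal --- yet $d_T$ is square-free and the lattice $2T$ is maximal. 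Your ``safety net'' (fall back on \thmref{mainthm} with the maximal-lattice remark) therefore does not close the argument, and your plan to ``arrange finitely many local conditions'' to land in the right genus is not a method: you give no mechanism by which the induction in \textit{part~A} could be made to respect a Hasse-invariant constraint at every prime dividing $d_T$.

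What the paper actually does is different and sharper. It proves a genuine strengthening, \thmref{oddthm}: for odd $n$ and cuspidal $F$, one has $a_F(T)\neq 0$ for infinitely many $T$ with $d_T$ an odd \emph{prime}. This requires a new analytic input, \lemref{appln}: a nonzero $f \in S_k(\Gamma_1(N))$ whose Fourier expansion is supported away from the level has $a_f(p)\neq 0$ for $\gg X/\log X$ primes $p \le X$. The lemma is proved by decomposing $f$ along newforms with nebentypus, applying Rankin--Selberg $L$-functions $L(\mathfrak f \otimes \overline{\mathfrak g},s)$ together with Brumley's nonvanishing on $\Re(s)=1$, and invoking Ikehara--Wiener for $\tfrac{L'}{L}$. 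Since $n=3$ is odd, the theta-component $H_\mu$ from \propref{parta} has integral weight and Fourier support coprime to its level, so \lemref{appln} applies and yields $d_{\mathcal T}$ prime. The parity constraint on the ramification set then forces the algebra to be ramified exactly at $\{p,\infty\}$ and $|rd(\mathcal O_T)| = p = $ discriminant of the algebra, so $\mathcal O_T$ is maximal --- this is the short argument in the paper's subsection~5.1. In other words, the paper sidesteps the genus-matching problem you flagged as the main obstacle by refining ``square-free'' to ``prime'', and the substantive new content is \lemref{appln}, which your proposal does not contain.
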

As indicated before, note that the proof of \thmref{poll} does not use \thmref{mainthm} directly, instead uses a variant of it, which is proved in a self-contained manner in the section~\ref{app}. To put things into perspective, let us note here that the analytic properties of the spinor $L$-function for eigenforms on $\spn$ were conjectured by Andrianov (which are of course special cases of Langlands conjectures, note that we are dealing with non-generic automorphic forms here) and they were proved by him when $n=2$ (cf.~\cite{An3}). The meromorphic continuation of the spinor $L$-function for eigenforms on $\spt$ are known by the work of Asgari-Schmidt (cf. \cite{AS}), but obtaining the functional equation of these objects is a delicate matter and was not known for $n \geq 3$. Thus Pollack's work combined with our results from this paper shows the functional equation unconditionally for the first time when $n=3$.

As another application, let us mention that if one studies the standard $L$-function via the Andrianov identity (see \cite{An1}), one has to use a Rankin convolution involving a theta series attached to a quadratic form $T$. It is quite convenient to know from the beginning (from \thmref{mainthm}) that one may choose $T$ to have square-free discriminant (and hence the nebentypus character of the theta series is a primitive quadratic character), see \cite{An1, An2, Cou-Pan} for details.

As a last remark, let us mention that we have not considered the case of higher levels as the content of the paper is already quite technical; but it definitely is an interesting problem to consider. Let us just mention that our methods should also work in this more general setting, but we expect more complicated answers (cf. \cite{AD} for $n=1$). One may have to take into account the Fourier expansions at all cusps simultaneously and one may expect new difficulties concerning primes dividing the level.

In an Appendix (section~\ref{appn}) we lay down the first steps to study the asymptotic properties of non-cusp forms of integral of half-integral weights \textsl{intrinsically} -- i.e., without explicity using a basis of Eisenstein series. We hope that further investigations may shed some light on treating the question on the existence of fundamental Fourier coefficients of non-cuspidal Siegel modular forms of even degree and full level -- something which was not covered in \thmref{mainthm}.

As a side result arising from the Appendix, we prove the following statement: (see \lemref{half-hecke})
\begin{itemize}
\item
{\it an elliptic modular form $f$ of integral or half-integral weight $\kappa \geq 2$ on any principal congruence subgroup whose Fourier coefficients $a_f(n)$ satisfy the bound $a_f(n) \ll_f n^c$ for all $n \ge 1$ and for any fixed $c< \kappa-1$, must be a cusp form.}
\end{itemize}

This result may be of independent interest, and seems to be new for half-integral weights, and for the proof we do not use explicit combinatorics of (linear combinations of) Fourier coefficients of Eisenstein series for such weights. Let us however note that this result should also follow from the arguments in \cite[\S~5]{bo-ko} or by adopting the methods of \cite{BD1} with some technical modifications, 
but it seems these has not been worked out in the half-integral setting yet.

{\small \subsection*{Acknowledgements}
We would like to thank P. Anamby, A. Pollack, R. Schulze-Pillot for useful conversations. S.B. thanks IISc Bangalore and S.D. thanks Universit\"{a}t Mannheim and Alfr\'{e}d R\'{e}nyi Institute for Mathematics, where parts of this work was done, for generous hospitality and for providing excellent working conditions. S.D. also thanks IISc. Bangalore, DST (India) and UGC centre for advanced studies for financial support. During a substantial part of this work, S.D. held a Humboldt Fellowship from the Alexander von Humboldt Foundation and was supported by a MATRICS grant MTR/2017/000496 from DST-SERB, India. The authors are grateful to the referees for their detailed comments and suggestions and for pointing out some errors in an earlier version, which led to improved results.}

\section{Notation and preliminaries} \label{prelim}
\subsection{General notation}
\begin{inparaenum}[(1)]
\item 
Let $\rho \colon \mrm{GL}(n,\mf C)\rightarrow \mrm{GL}(V)$ be a finite 
dimensional (not necessarily irreducible) 
rational representation (a morphism in the sense of algebraic groups) with 
$m=\dim(V)$. 
We call $\rho$  a polynomial representation, if $ \rho$ can be 
realized as a map $\rho \colon \mrm{GL}(n,\mf C)\rightarrow \mrm{GL}(m, \mf C) $ where 
all the coordinate functions $g\longmapsto \rho_{ij}(g)$ are given by 
polynomial functions of the entries of $g$.  
 
For any such $\rho$ there exists a largest integer $k$ such that $\det^{-k}\otimes \rho$ is 
polynomial and we call this $k$ the (determinantal) weight $k(\rho)$. We tacitly use the fact that this weight does not decrease, if we tensor $\rho$
with another polynomial representation or restrict it to some 
$\mrm{GL}(n',\mf C)$ sitting inside $\mrm{GL}(n,\mf C)$ as an algebraic  subgroup. This follows easily by looking at the entries of $\rho(g)$ in any matrix realization of $\rho$.

\item
For a commutative ring $R$ with $1$, we denote by $M_{m,n}( R)$ to be set of $m \times n$ matrices with coefficients in $R$. If $m=n$, we put  $M_{n,n}( R)= M_n(R)$. We denote the transpose of a matrix $M$ by $M^t$. Further, for matrices $A,B$ of appropriate sizes, $A[B]:=B^t AB$. We denote
the $n\times n$ identity matrix over a subring of $\mf C$ by $1_n$. For quantities $a_1,\ldots,a_n \in R$ we denote by $\mrm{diag} (a_1, \ldots,a_n)$ the matrix consisting of the diagonal entries as $a_1, \ldots,a_n$.

Further $\mf Z_q$ denotes the ring of $q$-adic integers for a prime $q$, and $\nu_q$ the $q$-adic valuation.

\item
We define the set of half-integral, symmetric, positive semi-definite matrices by 
\[ \Lambda_n := \{ S= (s_{i,j})  \in M (n, \mf Q ) \mid S=S^t, s_{i,i} \in \mf Z,  s_{i,j} \in \tfrac{1}{2}\mf Z, \text{ and } S \text{ is positive semi-definite}\} \]
and denote the subset of positive definite matrices in $\Lambda_n$ by $\Lambda^+_n$.

\item
For $T$ real and $Z \in M_n(\mf C)$ we define $e(TZ) := \exp(2 \pi i \mrm{tr}(TZ))$, where $\mrm{tr}(M)$ is the trace of the matrix $M$.

\item
Throughout the paper, $\varepsilon$ denotes a small positive number which may vary at different places. Moreover the symbols $A \ll_c B$ and $O_S(T)$ have their standard meaning, implying that the constants involved depend on $c$ or the set $S$. 
\end{inparaenum}
   
\subsection{Siegel modular forms} \label{smfpre}
We denote by 
\[ \mf H_n:=\{ Z\in M_n(\mf C)\, \mid Z=Z^t, \Im(Z)>0\} \] 
the Siegel's upper half space of degree $n$.
The symplectic group $\spnr$ acts on $\mf H_n$ by
$Z\mapsto g \langle Z \rangle=(AZ+B)(CZ+D)^{-1}$; for a polynomial representation $\rho$ 
with values in $\mrm{GL}(V)$ we define the stroke operator action on $V$-valued functions $F$ on $\mf H_n$ by
\[ (F\mid_{\rho}g)(Z):=\rho(CZ+D)^{-1}F(g \langle Z \rangle). \]
A Siegel modular form of degree $n$ and automorphy factor $\rho$
is then a $V$-valued holomorphic function $F$ on $\mf H_n$ satisfying
$F\mid_{\rho}\gamma= F$ for all $\gamma\in \spn$
with the standard additional condition in degree $1$.
We denote by $M^n_{\rho}$ the vector space of all such functions and by
$S^n_{\rho}$ the subspace of cusp forms; if $\rho$ is scalar valued, we write
as usual $M_k^n$ and $S^n_k$ if $\rho=\det^k$. An element $F\in M^n_{\rho}$ has a Fourier
expansion
\[ F(\mc Z)= \sum_{S\in \Lambda_n} a_F(S) e(S \mc Z) \qq\qq ( a_F(S)\in V). \]
If $F$ is cuspidal, then this summation is supported on $\Lambda_n^+$. Sometimes we also write $a(F,S)$ for $a_F(S)$.

The definition above makes sense for arbitrary rational $\rho$,
but (if $\rho$ is irreducible) 
by a theorem of Freitag (\cite{Fr2}) $M^n_{\rho}$ can be nonzero 
only if $\rho$ is polynomial; therefore, we only have to take 
care of polynomial representations.

For $g \in \spne$ we denote by $g^{\downarrow}$ the image of $g$ under the diagonal embedding 
\begin{equation} \label{embed}
  \spne \hookrightarrow \spnr; \q \q g=\begin{psmallmatrix} a & b \\ c & d \end{psmallmatrix} \mapsto  \begin{psmallmatrix} 1 & 0 & 0 & 0 \\ 0 & a & 0 & b \\ 0 & 0 & 1 & 0 \\ 0 & c & 0 & d  \end{psmallmatrix} .  
\end{equation}
We also use the embedding of $\mrm{GL}(n-1,\mf R) \hookrightarrow \mrm{GL}(n,\mf R)$ given by $g \mapsto \begin{psmallmatrix} 1 & 0 \\ 0 &g \end{psmallmatrix}$.

\begin{defi} Let $f \in M_k^n$ and $Z \in \mf H_{n-1}$. The Siegel's $\Phi$-operator is then defined by
\begin{align} \label{phiop}
\Phi (F) (Z) := \underset{ t \rightarrow \infty} \lim F  \left( \begin{smallmatrix} Z & 0 \\ 0 & i t \end{smallmatrix} \right) = \sum_{T \in \Lambda_{n-1}} a_F(\begin{psmallmatrix}  T & 0 \\0 & 0 \end{psmallmatrix} )e(TZ).
\end{align}
\end{defi}
Then it is well known \cite{Fr} that $ \Phi (F)  \in M_k^{n-1}$. Moreover, $F \in M^n_k$ is a cusp form if and only if $F $ is in the kernel of the $\Phi$ operator.

\subsection{Jacobi forms} \label{jacobi}
Throughout this paper, we use a decomposition
for ${\mathcal Z}\in \mf H_n$ into blocks as follows:
\begin{equation}\label{decompo}
\mathcal Z=\begin{pmatrix}\tau &\z\\
\z^t & Z \end{pmatrix}  \qq (\z \in \mf C^{(1,n-1)}, Z\in \mf H_{n-1}).
\end{equation}
Clearly, every $F\in S^n_\rho$ has a 
Fourier-Jacobi expansion with respect to the decomposition above: 
\begin{equation} \label{FJ}
F({\mathcal Z})=\sum_{T\in \Lambda_{n-1}} \varphi_T(\tau,\z) e(TZ).
\end{equation}
The $\varphi_T$ are then ``Jacobi forms'' of automorphy factor $\rho$ and
index $T$, i.e. the functions $\psi({\mathcal Z}):= \varphi_T(\tau,\z)e (TZ)$
on $\mf H_n$ are holomorphic, satisfy $\psi\mid_{\rho} g=\psi$ for all $g\in C_{n, n-1}(\mf Z):= \{\left(\begin{smallmatrix} A & B\\
C & D\end{smallmatrix}\right)\mid (C,D)= 
\left(\begin{smallmatrix} * & 0 &{}& * & D_2\\
0 & 0 & {} & 0 & 1 \end{smallmatrix}\right)\}$,
where $*$ denotes some scalar entries, and satisfy the boundedness condition (Fourier expansion at $\infty$) that
\[ \psi(\mc Z)= \sum_{S = \left( \begin{smallmatrix} n & r/2 \\ r^t/2 & T \end{smallmatrix}\right) \in \Lambda_n} a_\psi(S) e(S \mc Z).\]
Note that this definition of vector-valued Jacobi forms does not agree
with the one in \cite{Zi}; for degree 2 our definition is the same as in \cite{Ibu}. In our set-up, where we work with Fourier-Jacobi coefficients, the above definition is obtained from the corresponding automorphy for Siegel modular forms, and we must use it. In \cite{Zi}, $\rho$ acts only on the Siegel upper-half space variable $\tau$. Unless the automorphy factor is $\rho = \det^k$, the definition given here does not match with that in \cite{Zi} (cf. \cite[p.~785, eq.~(1)]{Ibu}).

The case $\rho=\det^k$ is well known (see e.g.\cite{Kl, Zi}), in particular
scalar-valued Jacobi forms  $\phi_T$ admit a ``theta expansion''
\begin{equation} \label{thetaexp}
\varphi_T(\tau,\z)=\sum_{\mu} h_{\mu}(\tau)\cdot \Theta_T[\mu](\tau,\z)
\end{equation}
with summation over $\mu\in \mf Z^{n-1}/2T\cdot \mf Z^{n-1}$
and
\begin{displaymath} \label{jtheta}
\Theta_T[\mu](\tau,\z)=\sum_{R\in \mf Z^{n-1}}
e^{2\pi i ( T[R+\tilde{\mu}]\tau+ 2 \z T(R+\tilde{\mu})}.
\end{displaymath}
Here we use $\tilde{\mu}:= (2T)^{-1}\cdot \mu$.
We note here that the $h_{\mu}$ are then modular forms of weight 
$k-\frac{n-1}{2}$ 
on some congruence subgroup; the Fourier expansion of $h_{\mu}$ is of shape
\begin{equation} \label{hmu-fe}
h_{\mu}(\tau)=\sum_n a_{\mu}(n-T^{-1}[\mu/2]) e^{2\pi i (n- T^{-1}[\mu/2])\cdot \tau}
\end{equation}
and its Fourier coefficients are given  by 
\begin{equation}
a_{\mu}(n- T^{-1}[\mu/2]) = a_F(\left(\begin{smallmatrix} n & \mu/2\\
\mu/2^t & T\end{smallmatrix} \right) )
\label{relation}
\end{equation}
provided that $\varphi_T$ is the Fourier-Jacobi coefficient of
some scalar-valued Siegel modular form $F\in M_k^n$. We denote the space of (scalar-valued) Jacobi forms of weight $k$ and index $T$ by $J_{k,T}$.

\subsection{Maximal lattices and primitivity} \label{maxi}
When we talk about the `lattice $M$' for some $M \in \Lambda_n^+$, of course we are tacitly identifying $2M$ with the `even' lattice ${\mathcal L}_M:=(\mf Z^n, \mu \mapsto (2M)[\mu])$ inside $\mf Q^n$. We omit the accompanying quadratic form from notation when there is no danger of confusion.
{\bf Henceforth throughout this paper we assume that $d_M$ is odd and square-free}, in particular this implies that $2M$ corresponds to a maximal lattice; in other words there exist no even integral lattice properly containing $\mc L_M$. This can be seen easily: if $\mc L_M \subsetneq \mc L$ for another even lattice $\mc L$ with gram matrix $A$, then $\mc L_M = H\cdot \mc L$ for some $H \in M_n(\mf Z)$, and $d_M = \det(H)^2 d_{ A}$. Clearly $H$ can not be unimodular, and thus $d_M$ could not have been square-free.

Let us recall that the level $\ell_M$ of $2M$ is the smallest $\ell \geq 1$ such that $\ell \cdot (2M)^{-1}$ is even integral. We next compute the level in terms of the (absolute) discriminant.

\begin{lem}
Let $d_M$ be odd and square-free. Then $\ell_M$ is equal to $d_M$ if $n$ is even and equal to $4d_M$ if $n$ is odd. 
\end{lem}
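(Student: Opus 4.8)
The plan is to compute the level $\ell_M$ locally at each prime $p$, since $\ell_M = \mathrm{lcm}_p p^{\nu_p(\ell_M)}$ and the condition ``$\ell\cdot(2M)^{-1}$ is even integral'' can be checked prime by prime. So first I would fix a prime $p$ and pass to $\mf Z_p$, working with the $p$-adic lattice $\mc L_M \otimes \mf Z_p$ with Gram matrix $2M$; the maximality of $\mc L_M$ established just above implies that over each $\mf Z_p$ the lattice $2M$ is $\mf Z_p$-maximal (as a maximal even lattice stays maximal locally, which for odd $p$ is the same as being maximal integral). The standard structure theory of maximal quadratic lattices over $\mf Z_p$ (Eichler, or Kitaoka's book) then gives a Jordan decomposition of $2M$ over $\mf Z_p$ into at most two Jordan components, one unimodular and one $p$-modular, and for $p$ odd this component structure is governed precisely by $\nu_p(d_M)$, which by hypothesis is $0$ or $1$.

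Next I would carry out the two-prime-type case analysis. For $p$ odd: if $p\nmid d_M$, then $2M$ is $\mf Z_p$-unimodular, hence $(2M)^{-1}$ is already integral over $\mf Z_p$ and $p$ contributes nothing to $\ell_M$; if $p\| d_M$, then over $\mf Z_p$ the form $2M$ is equivalent to a diagonal form with $n-1$ units and one entry of valuation $1$, so $p\cdot(2M)^{-1}$ is integral but $(2M)^{-1}$ is not, giving $\nu_p(\ell_M)=1$. Since $d_M$ is odd and square-free, this already shows that the odd part of $\ell_M$ equals $d_M$. The remaining work is at $p=2$, where the distinction between $n$ even and $n$ odd enters: one uses $d_M$ odd, i.e. $2M$ is $\mf Z_2$-unimodular as a $\mf Z_2$-lattice in the sense that $\det(2M)\in\mf Z_2^\times$, but the diagonal entries of $2M$ are the norms $2M[e_i]$ which are \emph{even} by definition of $\Lambda_n$. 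For a unimodular even $\mf Z_2$-lattice the inverse form $(2M)^{-1}$ need not be even integral — indeed $(2M)^{-1}$ is integral over $\mf Z_2$ but its diagonal entries can be odd — and here the parity of $n$ matters: even unimodular $\mf Z_2$-lattices exist only in even rank (the forms $U$ and $V_8$, i.e. the hyperbolic plane $\left(\begin{smallmatrix}0&1\\1&0\end{smallmatrix}\right)$ and $E_8$), while in odd rank the even lattice must contain a $2$-modular piece, forcing $\nu_2(\ell_M)=2$; when $n$ is even one checks $(2M)^{-1}$ is itself even over $\mf Z_2$, so $\nu_2(\ell_M)=0$ — wait, this needs care, see below.

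Let me restate the $2$-adic analysis more carefully, as this is where I expect the main obstacle. Over $\mf Z_2$, $2M$ is a \emph{maximal} even lattice with $\det(2M)$ a $2$-adic unit (as $d_M$ is odd). When $n$ is even, the maximal even unimodular $\mf Z_2$-lattices are orthogonal sums of hyperbolic planes $U=\left(\begin{smallmatrix}0&1\\1&0\end{smallmatrix}\right)$ and copies of $E_8$-type blocks; for $U$ one has $U^{-1}=U$, which is even integral, so $(2M)^{-1}$ is even integral over $\mf Z_2$ and $2$ does not divide $\ell_M$. When $n$ is odd, a unit-determinant even $\mf Z_2$-lattice of odd rank cannot be $\mf Z_2$-unimodular (the even unimodular ones have even rank); maximality then forces $2M\cong(\text{even unimodular of rank }n-1)\perp\langle 2\varepsilon\rangle$ over $\mf Z_2$ with $\varepsilon\in\mf Z_2^\times$ — but this has determinant of valuation $1$, contradicting $d_M$ odd, so in fact when $n$ is odd one gets the scaled block $\langle 2\varepsilon\rangle$ replaced by something forcing $4\mid\ell_M$; the cleanest route is to note $d_M=\tfrac12\det(2M)$ for $n$ odd, so $\det(2M)=2d_M$ has $\nu_2=1$, and then the maximal $\mf Z_2$-lattice of odd rank with $\nu_2(\det)=1$ is $U^{\perp(n-1)/2}\perp\langle 2\varepsilon\rangle$, whose inverse is $U^{\perp(n-1)/2}\perp\langle(2\varepsilon)^{-1}\rangle$, and $4\cdot(2\varepsilon)^{-1}=2\varepsilon^{-1}$ is even while $2\cdot(2\varepsilon)^{-1}=\varepsilon^{-1}$ is an odd unit, hence not an even norm; therefore $\nu_2(\ell_M)=2$. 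The main obstacle is thus pinning down exactly which even $\mf Z_2$-lattices occur as localizations of a maximal even lattice and tracking the even-versus-merely-integral distinction for the inverse form; once the $\mf Z_2$-Jordan splitting type is identified (unimodular of even rank when $n$ even; unimodular-of-rank-$(n-1)$ plus a $\langle2\varepsilon\rangle$ block when $n$ odd), assembling $\ell_M=\prod_p p^{\nu_p(\ell_M)}$ gives $\ell_M=d_M$ for $n$ even and $\ell_M=4d_M$ for $n$ odd.
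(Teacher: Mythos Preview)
Your argument is correct. You compute $\nu_p(\ell_M)$ at every prime via the local Jordan decomposition of the maximal lattice and then assemble; this is sound once the $\mf Z_2$-normal form is identified, and your final paragraph does this correctly. One small imprecision: over $\mf Z_2$ the even unimodular building blocks in even rank are the hyperbolic plane $\mathbb H$ and the anisotropic plane $\mathbb F=\left(\begin{smallmatrix}2&1\\1&2\end{smallmatrix}\right)$ rather than ``$E_8$-type blocks,'' but this does not affect the computation since both have even-integral inverse over $\mf Z_2$.

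The paper argues differently and more briefly. For the odd part of $\ell_M$ it avoids local structure theory altogether and uses only the elementary divisibilities $\ell_M\mid d_M$ and $d_M\mid \ell_M^{n}$ (respectively $\ell_M\mid 4d_M$ and $4d_M\mid\ell_M^{n}$ when $n$ is odd), which together with $d_M$ square-free already force the odd part of $\ell_M$ to equal $d_M$ and, in the odd-$n$ case, leave only the $2$-power to be determined. It then invokes the single $\mf Z_2$-normal form $(2M)[U_2]=\mathbb H\perp\cdots\perp\mathbb H\perp 2$ to read off $\nu_2(\ell_M)=2$. So the paper's route is shorter and nearly self-contained, needing only the general level--discriminant divisibility and one explicit $2$-adic normal form, while yours is more uniform and makes the contribution of each prime transparent, at the cost of quoting more from the classification of maximal $\mf Z_p$-lattices.
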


\begin{proof}
When $n$ is even this follows from the facts that $\ell_M \mid d_M, \, d_M \mid \ell_M^{n} $. When $n$ is odd, we have $\ell \mid 4d_M, \, 4d_M \mid \ell^{n} $ which imply $2d_M \mid \ell \mid 4d_M$. To get the exact power of $2$ dividing $\ell$, we appeal to the local theory of quadratic forms. Namely one knows that $(2M)[U_2] = \mbb H\perp \dots \mbb H\perp 2 $ for some $U_2 \in \mrm{GL}_n(\mf Z_2)$ and $\mbb H$ being the hyperbolic plane (see section~\ref{redeg1} for more discussion). Our claim then follows from that facts that the 

(i) levels of $2M$ and $(2M)[U_2] $ are the same;

(ii) for the even quadratic form $\mbb H\perp \dots \mbb H\perp 2 $ over $\mbb Z_2$, the level clearly equals $4$; and

(iii) the level of $2M$ over $\mf Z_2$ divides that over $\mf Z$.
\end{proof} 

\begin{prop} \label{prim}
Let $d_M$ be odd, square-free. There is a $\mu\in \mf Z^n$ such that
$\frac{1}{4} M^{-1}[\mu] $ has exact denominator $d=d_M$ if n is even and $4d$ if $n$ is odd.
\end{prop}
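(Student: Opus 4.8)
The plan is to work locally at each prime and then patch via the Chinese Remainder Theorem. The quantity $\tfrac14 M^{-1}[\mu] = (2M)^{-1}[\mu]$ is a rational number, and its denominator can be computed prime by prime: its $q$-adic valuation is controlled by the $q$-adic structure of the lattice $2M$. So I would first establish the purely local statement: for each prime $q$, there is a vector $\mu^{(q)} \in \mf Z_q^n$ such that $\nu_q\bigl((2M)^{-1}[\mu^{(q)}]\bigr)$ is as negative as possible, namely $-\nu_q(\ell_M)$ where $\ell_M$ is the level computed in the preceding Lemma. Then I would choose $\mu \in \mf Z^n$ with $\mu \equiv \mu^{(q)}$ modulo a high power of $q$ for each $q \mid \ell_M$ (and $\mu$ arbitrary at the finitely many other primes, which contribute nothing since $(2M)^{-1}$ is integral there away from $\ell_M$), so that $(2M)^{-1}[\mu]$ has exact denominator $\ell_M$, which by the Lemma equals $d_M$ (resp. $4d_M$) according to the parity of $n$.

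For the local analysis I would invoke the local classification of even lattices, exactly as in the proof of the preceding Lemma. Over $\mf Z_q$ for odd $q$: if $q \nmid d_M$ the form $(2M)$ is unimodular over $\mf Z_q$, so $(2M)^{-1}$ is already $q$-integral and any $\mu$ gives valuation $\ge 0$; if $q \mid d_M$ (necessarily $q \| d_M$ since $d_M$ is square-free), the form diagonalizes over $\mf Z_q$ as a unimodular part orthogonally split off a rank-one piece $\langle q u\rangle$ with $u \in \mf Z_q^\times$, whence $(2M)^{-1}$ has a matching block $\langle (qu)^{-1}\rangle$ and taking $\mu$ to be the corresponding unit coordinate vector gives $\nu_q = -1$, which is optimal. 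Over $\mf Z_2$, when $n$ is odd we use the normal form $(2M)[U_2] = \mbb H \perp \cdots \perp \mbb H \perp \langle 2\rangle$ from the Lemma's proof: the inverse of $\langle 2 \rangle$ is $\langle 1/2\rangle$, so the unit vector in that coordinate gives $\nu_2 = -1$, wait — one must be slightly careful, since the normal form shows $\ell_M$ has $2$-part equal to $4$, not $2$. Indeed $\langle 2\rangle^{-1} = \langle \tfrac12\rangle$ has denominator $2$, but the level of the even form $\langle 2\rangle$ over $\mf Z_2$ is $4$ because one needs $\ell\cdot\langle\tfrac12\rangle = \langle \tfrac{\ell}{2}\rangle$ to be \emph{even} integral, forcing $4 \mid \ell$. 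For the \emph{denominator} of $(2M)^{-1}[\mu]$, however, what matters is $2$-integrality, not evenness, so the exact denominator achievable at $2$ is only $2$. This needs reconciling with the claimed $4d_M$ for odd $n$.

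So the genuinely delicate point — and I expect it to be the main obstacle — is precisely this prime $2$ discrepancy in the odd-$n$ case: naively $(2M)^{-1}[\mu]$ over $\mf Z_2$ has denominator at most $2$, yet the proposition claims denominator $4d_M$. The resolution must be that one should not use $(2M)^{-1}[\mu]$ with $\mu \in \mf Z^n$ but rather exploit that $\tfrac14 M^{-1}[\mu] = \tfrac14 (2M)^{-1}[\mu] \cdot 2 = \tfrac12 (2M)^{-1}[\mu]$; hmm, let me recompute: $\tfrac14 M^{-1} = \tfrac14 \cdot 2 (2M)^{-1} = \tfrac12 (2M)^{-1}$. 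So $\tfrac14 M^{-1}[\mu] = \tfrac12 (2M)^{-1}[\mu]$, and if $(2M)^{-1}[\mu]$ has odd numerator over $\mf Z_2$ with denominator $2$, then $\tfrac14 M^{-1}[\mu]$ has denominator $4$. That is the extra factor of $2$. Thus the correct local recipe is: take $\mu$ realizing denominator $2$ for $(2M)^{-1}[\cdot]$ at the prime $2$ (achievable via the $\langle 2\rangle$ block, whose inverse block $\langle \tfrac12\rangle$ on a unit vector gives value $\tfrac12 u$, $u$ a $2$-adic unit), denominator $q$ for each odd $q \mid d_M$, and integral elsewhere; patch by CRT; then $\tfrac14 M^{-1}[\mu] = \tfrac12 (2M)^{-1}[\mu]$ has exact denominator $2 \cdot d_M = \ldots$ wait, that gives $2 d_M$ not $4 d_M$. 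Let me retrace once more: $\ell_M = 4 d_M$ for odd $n$, and $\ell_M$ is by definition the smallest $\ell$ with $\ell (2M)^{-1}$ \emph{even} integral; the smallest $\ell$ with $\ell(2M)^{-1}$ merely \emph{integral} is $d_M \cdot 2$ (the $\mbb H$ blocks are unimodular, the $\langle 2\rangle$ block contributes $2$), call it $\ell_M'$. Then $\tfrac14 M^{-1}[\mu] = \tfrac12(2M)^{-1}[\mu]$ has exact denominator dividing $2\ell_M' = 4d_M$, and one checks it is exactly $4d_M$ by the local computations (at $2$: $(2M)^{-1}[\mu] = \tfrac12 u$ exactly for the right $\mu$, so $\tfrac12(2M)^{-1}[\mu] = \tfrac14 u$, denominator $4$; at odd $q\mid d_M$: denominator $q$). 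The even-$n$ case is the same argument with the anisotropic $2$-adic kernel trivial, giving exact denominator $d_M$. I would write this up by first doing all odd primes uniformly, then handling $q=2$ via the explicit Jordan form, then invoking CRT to produce the global $\mu$, and finally recording that the resulting exact denominator matches $d_M$ or $4d_M$ by the Lemma.
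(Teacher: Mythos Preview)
Your approach is correct and shares the paper's overall local--global template (work prime by prime using the Jordan splittings of $2M$, then patch by CRT/strong approximation), but you verify the local denominators differently. You compute the valuation of $\tfrac14 M^{-1}[\mu]=\tfrac12(2M)^{-1}[\mu]$ directly from the explicit normal forms; the paper instead first proves an abstract criterion via \emph{maximality} of the lattice (its \textbf{Claim~1}: $(2M)^{-1}[\mu]\in 2\mf Z_p \Leftrightarrow \mu\in 2M\cdot\mf Z_p^n$), uses that to pin down the odd--prime contributions, and then invokes a separate $2$-adic statement (its \textbf{Claim~2}: $(2M)^{-1}[\mu]\in\mf Z_2 \Leftrightarrow (2M)^{-1}[\mu]\in 2\mf Z_2$ for $n$ odd) to extract the extra factor of $4$. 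Your route is more elementary and arguably cleaner here, since the hypothesis ``$d_M$ odd square-free'' already fixes the local shapes completely; the paper's maximality argument is more conceptual and isolates a property that would survive in situations where explicit normal forms are less convenient.

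One small point to tighten: for even $n$ you must still check that no $2$ enters the denominator of $\tfrac12(2M)^{-1}[\mu]$, i.e.\ that $(2M)^{-1}[\mu]\in 2\mf Z_2$ for \emph{every} $\mu$. The $2$-adic form is either $\mbb H^{n/2}$ or $\mbb H^{n/2-1}\perp\mbb F$ with $\mbb F=\begin{psm}2&1\\1&2\end{psm}$, so the anisotropic kernel need not be trivial as you wrote; but in both cases the inverse form is even-valued on $\mf Z_2^2$ (since $\mbb H^{-1}=\mbb H$ and $\mbb F^{-1}[\mu]=\tfrac{2}{3}(\mu_1^2-\mu_1\mu_2+\mu_2^2)\in 2\mf Z_2$), so your conclusion stands once this is recorded.
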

This proposition would be crucial for us, and we would call such $ \mu$ to be \textsl{\textbf{primitive}} in the sequel.
\begin{proof}
To show this, we first note that maximality is a local property (see e.g., \cite{Ki}), and work locally. For any prime $p$, let us denote the (maximal) lattice $\mc L_M \otimes \mf Z_p$ by ${\mathcal L}_{M,p}$ and use the identification ${\mathcal L}_{M,p}:=(\mf Z_p^n, \mu \mapsto (2M)[\mu])$ (or simply by $(\mf Z_p^n, 2M)$) inside $\mf Q_p^n$. We now prove the crucial property in the claim below.

\noindent \textbullet \textbf{\, Claim~1:} For any prime $p$ and $\mu \in \mf Z_p^n$ we have the property:
\begin{equation}
\begin{aligned}
(2M)^{-1}[\mu]\in 2 \mf Z_p & \q \text{  if and only if  } &\mu \in 2M\cdot \mf Z_p^n &  \q (n \geq 1).
\end{aligned}
\label{ganz}
\end{equation}
To prove the claim, let us note that the `if' statement is trivial. In the other direction, for any prime $p$, write $\mu = (2M) \cdot \widetilde{\mu}$ and assume that $\widetilde{\mu} \not \in \mf Z_p^n$. Consider the lattice $\widetilde{\mc L}: = {\mathcal L}_{M,p} + \langle \widetilde{\mu} \rangle$, where we have put $\langle \widetilde{\mu}  \rangle := \mf Z_p \cdot \widetilde{\mu} $. Then as lattices in $\mf Q_p^n$ carrying the even integral quadratic form $2M$ (now viewed over $\mf Z_p^n$), clearly $\mf Z_p^n \subsetneq \widetilde{\mc L}$. We will be done if we can show that $(\widetilde{\mc L}, 2M)$ is even integral; as this will contradict the maximality of $({\mathcal L}_{M,p} , 2M)$.

Let $\nu \in \widetilde{\mc L}$. Writing $\nu:= \beta + c \widetilde{\mu}$ ($\beta \in {\mathcal L}_{M,p}, c \in \mf Z_p$), we see that $(2M)[\nu] \in 2 \mf Z_p$ if and only if $ (2M)[\widetilde{\mu} ]= (2M)^{-1}[\mu] \in 2 \mf Z_p$. Therefore we are done with \textbf{Claim~1}. 

Let us now proceed to prove \propref{prim}.
We choose $\mu\in \mf Z^n$ such that for all $p\mid \det(2M)$ we have $\mu \notin 2M \cdot \mf Z_p^n$. This can certainly be done locally. Note that $2M$ is equivalent over $\mf Z_p$ to the quadratic form $\, \langle * \perp \cdots * \perp *p \rangle \,$ for odd $p$ (e.g., see \cite[Thm.~3.1]{cas}) where the $*$ are units; and similarly for $p=2$, see the proof of \textbf{Claim~2} below. Then by strong approximation we get $\mu \in \mf Z^n$ with the requested properties. 

We claim that such a $\mu$ is primitive. This follows from first showing that $d$ and $4d$ are indeed the largest possible denominators (because of the level of $M$). We therefore can write for some $\alpha = \alpha_p \in \mf Z_p$ (we remind the reader that $d$ is odd)
\begin{equation} \label{denom1}
\frac{1}{4} M^{-1}[\mu]  = \frac{1}{2} (2M)^{-1}[\mu] = \begin{cases}  \frac{\alpha}{d} \q &(p \text{ odd})\\  \frac{\alpha}{4d} \q &(p=2) \end{cases}.
\end{equation}
We have to check that $(\alpha_p,p)=1$.
When $p$ is odd, things are smooth, and the lemma follows from \eqref{ganz} in \textbf{Claim~1} along with our choice of $\mu$ above.

When $p=2$ and $n$ is odd, then we need some care. Namely we observe the following, whose proof is deferred to the end of the proof of this proposition.

\noindent \textbullet \textbf{\, Claim~2:}
\begin{equation} \label{isoeq}
 (2M)^{-1}[\mu]\in  \mf Z_2  \text{  if and only if  } (2M)^{-1}[\mu]\in   2 \mf Z_2 .
\end{equation}  
Granting this claim, we can now finish the proof of \propref{prim} when $p=2$. If in \eqref{denom1} $\alpha=\alpha_2$ is odd we are done. Otherwise putting $\alpha'=\alpha/2 \in \mf Z_2$
we get $ (2M)^{-1}[\mu] = \alpha'/d$. Since $d$ is odd, this implies that $(2M)^{-1}[\mu] \in \mf Z_2$ which means, by \textbf{Claim~2} that $(2M)^{-1}[\mu] \in 2 \mf Z_2$, contradicting \textbf{Claim~1}.

It remains to prove \textbf{Claim~2}.
To do that, we appeal to the dyadic theory of quadratic forms (e.g., see \cite[Lem.~4.1]{cas}) to recall that since $n$ is odd and $\nu_2(\det(2M)) = 1$, $ 2M$ is equivalent to the quadratic form $\, \mbb H \perp \cdots \perp \mbb H \perp 2 \, $ over $\mf Z_2$. Here $\mbb H = \left( \begin{smallmatrix} 0 & 1 \\ 1 & 0 \end{smallmatrix} \right)$ is a hyperbolic plane and $\perp$ means orthogonal direct sum. This shows that (with $\mu = (\mu_1,\ldots, \mu_n)^t$)
\[ (2M)^{-1}[\mu] = \frac{1}{2} \mu_n^2 + 2\mf Z_2, \]
from which our claim follows.
\end{proof}

\section{Part~A: algebraic aspects}

\subsection{Step 1: A non-vanishing property for  Fourier-Jacobi coefficients } \label{partas1}

The statement below seems to be new only when we start
from a vector-valued function. In the scalar-valued case variants have appeared in works of Eichler-Zagier \cite{EZ}, Yamana \cite{Yam}, Ibukiyama \cite{Ibu} and others.

We consider the following situation. Let $\rho \colon \mrm{GL}(n,\mf C) \rightarrow \mrm{GL}(V)$ be a polynomial representation and 
$F \colon \mf H_n\longrightarrow V$ a holomorphic function, not identically zero.
We decompose ${\mathcal Z}\in \mf H_n$ into blocks as in \eqref{decompo}.
We then consider the Taylor expansion of $F$ as a function of $\z=(z_2,\dots ,z_n)\in \mf C^{(1,n-1)}$ as follows. We can write
\begin{equation} \label{taylorex}
F(\mc Z)= \sum_{\lambda} F_{\lambda}(\tau,Z) \z^{\lambda},
\end{equation} 
where $\lambda=(\lambda_2,\dots\lambda_n)\in {\mf N}^{n-1}$ is a polyindex and
$\z^{\lambda}:=z_2^{\lambda_2}\dots z_n^{\lambda_n}$.

We put $\nu=\nu(\lambda)=\sum_{i=2}^n \lambda_i$ and
\[\nu_0 :=\min\{\nu(\lambda)\,\mid\, F_{\lambda}\not=0\}. \]
Then we look at all the Taylor coefficients of homogeneous degree $\nu_0$ and we study a polynomial in variables $X_2,\dots, X_n$ of homogeneous degree $\nu_0$:
\begin{equation} \label{Fo}
F^o(\tau,Z):=\sum_{\lambda: \nu(\lambda)=\nu_0} F_{\lambda}(\tau,Z) X_2^{\lambda_2},\dots X_n^{\lambda_n}. 
\end{equation}
We may view $F^o$ as a function on $\mf H\times \mf H_{n-1}$ with values
in $V\otimes \mf C[X_2,\dots , X_n]_{\nu_0}$, where we denote by $\mf C[X_2,\dots , X_n]_{\nu_0}$ the $\mf C$-vector space of homogeneous polynomials of degree $\nu_0$. 

For an integer $m \ge 1$, we denote by $\mrm{Sym}^{m}$ the symmetric $m$-th power representation of $\glnc$ realised in the vector space of homogeneous polynomials over $\mf C$ of degree $m$:
\begin{align*}
\mrm{Sym}^{m} \colon \glnc \rightarrow \mrm{GL}(\mf C[Z_1,\dots &, Z_n]_m), \q g \mapsto  g \cdot f \q (f \in \mf C[Z_1,\dots , Z_n]_m)\\
(g \cdot f)(Z_1,\dots , Z_n) &:= f( (Z_1,\dots , Z_n) \cdot g).
\end{align*}
Recall the embedding $\mrm{Sp}(n-1,\mf R) \hookrightarrow \mrm{Sp}(n,\mf R)$ given by $g \mapsto g^{\downarrow}$, from \eqref{embed}.

\begin{prop} \label{vec}
Let the setting be as above. Then for any $g=\left( \begin{smallmatrix} a & b \\ c & d \end{smallmatrix}\right) 
\in \mrm{Sp}(n-1,\mf R)$
\begin{equation}
\left(F\mid_{\rho}g^{\downarrow}\right)^o (\tau,Z)= 
\rho \left(  \left(\begin{smallmatrix} 1 & 0\\
0 & cZ+d\end{smallmatrix}\right) \right)^{-1}\otimes \mrm{Sym}^{\nu_o}(cZ+d)^{-1} 
F^o(\tau, g \langle Z \rangle).\label{taylor} 
\end{equation}    
In particular if $F \in M^n_\rho$, then $F^o$, viewed as a function of $Z$, is in $M^{n-1}_{\rho'\otimes Sym^{\nu_0}}$,
where $\rho'$ is the restriction of $ \rho$ to $GL(n-1)\hookrightarrow GL(n)$ (cf. subsection~\ref{smfpre}). Moreover if $F$ is cuspidal, then $F^o$ is also cuspidal. Further if $F \neq 0$, then for some $\tau=\tau_0$, $F^o(\tau_0,Z)$ is non-zero as a function of $Z$.
\end{prop}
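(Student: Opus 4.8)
The plan is to establish the transformation law \eqref{taylor} by an explicit computation of $g^{\downarrow}\langle\mc Z\rangle$ and of the attached automorphy factor, combined with a careful bookkeeping of the lowest-order terms of the $\z$-Taylor expansion; the remaining assertions then follow from \eqref{taylor} and from the Fourier expansion of $F$. First I would write $g^{\downarrow}=\left(\begin{smallmatrix}\mc A & \mc B\\ \mc C & \mc D\end{smallmatrix}\right)$ in $2n\times 2n$ block form relative to the splitting $n=1+(n-1)$ of \eqref{decompo}; from \eqref{embed} the factor $\mc C\mc Z+\mc D$ equals $\left(\begin{smallmatrix} 1 & 0\\ c\z^t & cZ+d\end{smallmatrix}\right)$, and a one-line matrix multiplication gives
\[ g^{\downarrow}\langle \mc Z\rangle=\begin{pmatrix} \tau-\z(cZ+d)^{-1}c\,\z^t & \z(cZ+d)^{-1}\\ ((cZ+d)^t)^{-1}\z^t & g\langle Z\rangle\end{pmatrix}, \]
the lower-left block being computed from the symplectic identities $c(cZ+d)^t=(cZ+d)c^t$ and $ad^t-bc^t=1$. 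Three features will be used repeatedly: (i) the lower-right block is exactly $g\langle Z\rangle$, with no dependence on $\z$; (ii) the ``new $\z$-variable'' $\mathfrak w:=\z(cZ+d)^{-1}$ is, for fixed $Z$, a linear isomorphism in $\z$, so $\mathfrak w^{\lambda}$ is homogeneous of degree $\nu(\lambda)$ in $\z$; and (iii) the ``new $\tau$-variable'' differs from $\tau$ only by a quantity quadratic in $\z$.

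Next I would substitute this into $(F\mid_{\rho}g^{\downarrow})(\mc Z)=\rho\left(\begin{smallmatrix}1&0\\ c\z^t& cZ+d\end{smallmatrix}\right)^{-1}F(g^{\downarrow}\langle\mc Z\rangle)$ and expand in $\z$. Since $\rho$ is polynomial, $\rho\left(\begin{smallmatrix}1&0\\ c\z^t& cZ+d\end{smallmatrix}\right)^{-1}=\rho\left(\begin{smallmatrix}1&0\\ 0& cZ+d\end{smallmatrix}\right)^{-1}+O(\z)$; expanding $F$ via \eqref{taylorex}, using (iii) to replace the shifted first argument of each $F_{\lambda}$ by $\tau$ up to terms of order $2$ in $\z$, and (ii) for the homogeneity, one sees that $F(g^{\downarrow}\langle\mc Z\rangle)$ has no terms of $\z$-degree below $\nu_0$ and that its degree-$\nu_0$ part equals $\sum_{\nu(\lambda)=\nu_0}F_{\lambda}(\tau,g\langle Z\rangle)\bigl(\z(cZ+d)^{-1}\bigr)^{\lambda}$. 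Multiplying by the automorphy factor leaves the lowest-order term in degree $\nu_0$, equal to
\[ \rho\left(\begin{smallmatrix}1&0\\ 0& cZ+d\end{smallmatrix}\right)^{-1}\sum_{\nu(\lambda)=\nu_0}F_{\lambda}(\tau,g\langle Z\rangle)\bigl(\z(cZ+d)^{-1}\bigr)^{\lambda}. \]
As $\z\mapsto\z(cZ+d)^{-1}$ and $\rho(\cdot)$ are invertible, this is not identically zero (it vanishes only where $F^{o}$ does), so $\nu_0(F\mid_{\rho}g^{\downarrow})=\nu_0$ and, after renaming $\z$ to the formal variables $X_2,\dots,X_n$, the displayed expression is $(F\mid_{\rho}g^{\downarrow})^{o}$. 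Recalling that $\mrm{Sym}^{\nu_0}(h)$ acts by $f(X)\mapsto f(Xh)$, so that $\mrm{Sym}^{\nu_0}(cZ+d)^{-1}$ carries $X^{\lambda}$ to $\bigl(X(cZ+d)^{-1}\bigr)^{\lambda}$, this identity is precisely \eqref{taylor}.

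For the consequences I would specialize to $g=\gamma\in\mrm{Sp}(n-1,\mf Z)$; then $\gamma^{\downarrow}\in\mrm{Sp}(n,\mf Z)$, so $F\mid_{\rho}\gamma^{\downarrow}=F$ and \eqref{taylor} becomes $F^{o}(\tau,\gamma\langle Z\rangle)=(\rho'\otimes\mrm{Sym}^{\nu_0})(cZ+d)\,F^{o}(\tau,Z)$, using $\rho\left(\begin{smallmatrix}1&0\\0&h\end{smallmatrix}\right)=\rho'(h)$ for $h\in\mrm{GL}(n-1,\mf C)$ by the definition of $\rho'$. Holomorphy of $F^{o}$ in $(\tau,Z)$ is clear, the $F_{\lambda}$ being $\z$-derivatives of $F$ at $\z=0$. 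The growth condition, which is only needed when $n-1=1$, and the cuspidality claim both come from the Fourier expansion: inserting $\mc Z=\left(\begin{smallmatrix}\tau&\z\\ \z^t& Z\end{smallmatrix}\right)$ into $F=\sum_{S\in\Lambda_{n}}a_{F}(S)e(S\mc Z)$ and expanding each $e(S\mc Z)$ in $\z$ (writing $S=\left(\begin{smallmatrix} n_0 & r/2\\ r^t/2 & T\end{smallmatrix}\right)$) exhibits a Fourier expansion of $F^{o}(\tau,Z)$ whose $Z$-indices $T$ are exactly those occurring as the lower-right block of some $S\in\Lambda_{n}$ with $a_F(S)\ne 0$; hence $T\in\Lambda_{n-1}$, and $T\in\Lambda_{n-1}^{+}$ when $F$ is cuspidal, since a null vector of such a $T$ would be a null vector of the positive-definite $S$. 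For full level this support condition is exactly what it means for $F^{o}(\tau_0,\cdot)$ to lie in $M^{n-1}_{\rho'\otimes\mrm{Sym}^{\nu_0}}$, respectively in $S^{n-1}_{\rho'\otimes\mrm{Sym}^{\nu_0}}$. Finally $F^{o}\not\equiv 0$ is immediate from the definition of $\nu_0$ (some $F_{\lambda}$ with $\nu(\lambda)=\nu_0$ is nonzero), so $F^{o}(\tau_0,\cdot)\not\equiv 0$ for every $\tau_0$ in the nonempty open set where $F^{o}$ does not vanish identically in $Z$.

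The step I expect to be the main obstacle is the $\z$-expansion of the second paragraph: one has to check carefully that neither the quadratic shift in the $\tau$-slot of $g^{\downarrow}\langle\mc Z\rangle$ nor the $O(\z)$-tail of $\rho$ of the automorphy factor contaminates the degree-$\nu_0$ coefficient, that no cancellation occurs there, and --- a frequent source of contragredient/sign errors --- that the substitution $X\mapsto X(cZ+d)^{-1}$ is matched with the chosen normalization of $\mrm{Sym}^{\nu_0}$ so that the resulting automorphy factor is $\rho'\otimes\mrm{Sym}^{\nu_0}$ and not its dual. Everything else is a short explicit matrix computation, a routine Fourier-series bookkeeping, and, for the last assertion, the definition of $\nu_0$.
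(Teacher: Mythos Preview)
Your proposal is correct and follows essentially the same route as the paper: compute $g^{\downarrow}\langle\mc Z\rangle$ explicitly, expand $F\mid_{\rho}g^{\downarrow}$ in $\z$, and isolate the degree-$\nu_0$ part by observing that the quadratic $\tau$-shift and the $O(\z)$-tail of the automorphy factor only contribute in higher degree. The paper handles the automorphy factor by the explicit factorization $\rho\left(\begin{smallmatrix}1&0\\ c\z^t& cZ+d\end{smallmatrix}\right)^{-1}=\rho\left(\begin{smallmatrix}1&0\\ -(cZ+d)^{-1}c\z^t& 1\end{smallmatrix}\right)\rho\left(\begin{smallmatrix}1&0\\ 0& cZ+d\end{smallmatrix}\right)^{-1}$ rather than your ``$\rho$ polynomial $\Rightarrow$ $O(\z)$'' shortcut, but this is only a cosmetic difference; your version is in fact a bit more explicit about the $\mrm{Sym}^{\nu_0}$ identification, the preservation of $\nu_0$, and the Fourier-expansion argument for cuspidality, all of which the paper leaves to the reader.
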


\begin{proof}
We recall that (see \cite{Fr})
\[g^{\downarrow}<{\mathcal Z}>= \begin{pmatrix}
\tau- \z(cZ+d)^{-1}c\z^t & \z(cZ+D)^{-1}\\
(cZ+D)^{-t}\z^t & g \langle Z \rangle \end{pmatrix} \]
Then we compute from \eqref{taylorex} that
\begin{equation} \label{Fg}
F\mid_{\rho}g^{\downarrow}=
 \rho( \begin{psmallmatrix}
   1 & 0\\
c\z^t & cZ+d \end{psmallmatrix})^{-1}\sum_\lambda
F_{\lambda}(\tau- \z(cZ+d)^{-1}c\z^t, g \langle Z \rangle) \left(\z(cZ+d)^{-1}\right)^{\lambda}. 
\end{equation}
We pick out the contributions to  $\z^{\lambda}$ with $\nu(\lambda)=\nu_0$:
Due to the minimality of $\nu_0$, all summands on the right hand side have degree $\geq \nu_0$
as polynomials in $\z$.

Now putting $\mf h=\mf h(\z) := - \z(cZ+d)^{-1}c\z^t$, and Taylor expanding around $\tau$ (with $\z$ in a sufficiently small neighbourhood of $0$), we get
\[  F_\lambda(\tau+\mf h,Z) = F_\lambda(\tau,Z) + O(\mf h) ; \]
here $O(\mf h)$ means a multiple of $\mf h$. Thus by minimality (see \eqref{Fo}) only $F_{\lambda}(\tau, g \langle Z \rangle)$ may contribute.

Moreover only $\rho(\left( \begin{smallmatrix}  1 & 0\\
0 & cZ+d \end{smallmatrix}   \right))^{-1}$ has to be considered. To see this note that
\begin{equation} \label{rhoz}
\rho( \begin{psmallmatrix}
   1 & 0\\
c\z^t & cZ+d \end{psmallmatrix})^{-1} = \rho( \begin{psmallmatrix}
   1 & 0\\
-(cZ+d)^{-1}c\z^t & 1_{n-1} \end{psmallmatrix}) \cdot \rho( \begin{psmallmatrix}
   1 & 0\\
0 & cZ+d \end{psmallmatrix})^{-1}.
\end{equation}
Let us observe here that considering the homogeneous decomposition of the quantity $\rho( \begin{psmallmatrix}
   1 & 0\\
-(cZ+d)^{-1}c\z^t & 1_{n-1} \end{psmallmatrix}) $ as a polynomial in $\mk z$ we can write
\[
\rho( \begin{psmallmatrix}
   1 & 0\\
-(cZ+d)^{-1}c\z^t & 1_{n-1} \end{psmallmatrix}) = 1_{n} + P(\mk z), \]
where $P(\mk z) \in M(n, \mf C) \otimes \mf C[\mk z]$ has entries which are polynomial in $\mk z$ without a constant term.
Since multiplication of the polynomial expression $\rho(\left( \begin{smallmatrix}  1 & 0\\
0 & cZ+d \end{smallmatrix}   \right)^{-1} F_\lambda(\tau,Z)\left(\z(cZ+d)^{-1}\right)^{\lambda}$ in $\z$ by $P(\mk z)$ can only increase it's degree in $\z$; which however already has degree $\nu_0$, it follows that only $\rho(\left( \begin{smallmatrix}  1 & 0\\
0 & cZ+d \end{smallmatrix}   \right))^{-1}$ has to be considered. This proves the automorphy of $F^o$.

The assertion about cuspidality of $F^o$ follows easily by looking at its Fourier expansion as a function of $Z$ and that about $F^o(\tau_0,Z)$ being not identically zero for some $\tau_0$ is trivial. This proves the proposition.
\end{proof}

\begin{rmk} \label{p}
The situation in \cite{saha} was very special: a result of Yamana \cite{Yam} states that a non-vanishing Siegel cusp form  of level one always has a non-vanishing Fourier coefficient supported on a primitive (binary) quadratic form. From this one could immediately get a non-vanishing Fourier-Jacobi coefficient of prime index. The argument here relies on the fact that a primitive binary quadratic form always represent infinitely many primes.

In order to pursue this procedure in our situation, say for degree $3$, we would need to prove that every primitive ternary quadratic form represents a binary quadratic form whose determinant is square-free. Unfortunately this is not  true in general. We give a counterexample.

From the local theory of ternary quadratic forms, we can find a
ternary quadratic form $T$, which for an odd prime $p$
is equivalent over $\mf Z_p$ to a form 
\[ \mrm{diag}( \epsilon,p^2,\mu\cdot p^2)\]
with $\epsilon,\mu\in \mf Z_p^{\times}$.
Clearly, all binary quadratic forms integrally represented by such $T$ have determinant divisible by $p^2$. \qed
\end{rmk}

Let us now look at the Fourier-Jacobi expansion of $F$ from \eqref{FJ}. Let $T \in \Lambda^+_{n-1}$ and $\varphi_T(\tau, \mk z)$ be the Fourier-Jacobi coefficients of $F$. From the definition of $F^o(\tau_0,Z)$ (cf. \eqref{Fo}) we see that
\begin{equation}\label{n-1fj}
a_{F^o}(T) = c \cdot \sum_{\nu(\lambda)=\nu_0} \frac{\partial^\lambda}{\partial \, \mk z^\lambda} \varphi_T(\tau_0,\mk z) |_{\mk z =0} 
\end{equation}
where $c$ is a non-zero constant independent of $F$ or $T$.

Clearly if $a_{F^o}(T) \ne 0$, then $\varphi_T \ne 0$. Thus the expression \eqref{n-1fj} provides us our avenue for carrying out an induction argument.

\begin{cor} \label{nton1}
Let $C \in \mf N$ be given. Assume that \thmref{mainthm} holds for
all non-zero forms in $M^{n-1}_\theta$ for all polynomial representations $\theta$ of $\mrm{GL}(n-1,\mf C)$ with $k(\theta)\ge C$. Then for any polynomial representation $\rho$ with $k(\rho)\ge C$,
all non-zero $F \in M^n_\rho$ has (infinitely many)
non-vanishing Fourier-Jacobi coefficients $\varphi_T$ with $T$ of size $n-1$ and square-free odd discriminant.
\end{cor}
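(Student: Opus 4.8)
The plan is to derive \corref{nton1} directly from \propref{vec} together with the induction hypothesis, with the only genuine content being a bookkeeping check on the weights. First I would take a non-zero $F \in M^n_\rho$ with $k(\rho) \ge C$ and form $F^o \in M^{n-1}_{\rho' \otimes \mrm{Sym}^{\nu_0}}$ as in \propref{vec}, where $\rho'$ is the restriction of $\rho$ to $\mrm{GL}(n-1,\mf C)$ and $\nu_0$ is the minimal Taylor order along $\mk z$. By the last assertion of \propref{vec}, there is a $\tau_0 \in \mf H$ with $F^o(\tau_0, Z)$ not identically zero as a function of $Z$; in particular $F^o$ is a non-zero element of $M^{n-1}_\theta$ with $\theta := \rho' \otimes \mrm{Sym}^{\nu_0}$.

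Next I would verify the weight bound needed to apply the induction hypothesis to $F^o$: one needs $k(\theta) \ge C$. This is exactly the point addressed in the first paragraph of section~\ref{prelim}: the determinantal weight does not decrease under restriction to a $\mrm{GL}(n-1,\mf C)$ sitting algebraically inside $\mrm{GL}(n,\mf C)$, nor under tensoring with another polynomial representation (here $\mrm{Sym}^{\nu_0}$, which is polynomial). Hence $k(\theta) \ge k(\rho') \ge k(\rho) \ge C$, so \thmref{mainthm} applies to $F^o \in M^{n-1}_\theta$.

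Then I would invoke \thmref{mainthm} in degree $n-1$: since $F^o \ne 0$, there exist infinitely many $T \in \Lambda^+_{n-1}$ with $d_T$ odd and square-free and $a_{F^o}(T) \ne 0$. By the identity \eqref{n-1fj}, $a_{F^o}(T)$ is (up to a nonzero constant) a fixed linear combination of the derivatives $\partial^\lambda_{\mk z}\varphi_T(\tau_0, \mk z)|_{\mk z = 0}$ with $\nu(\lambda) = \nu_0$; in particular $a_{F^o}(T) \ne 0$ forces $\varphi_T \ne 0$. Since this happens for infinitely many such $T$, we obtain infinitely many non-vanishing Fourier-Jacobi coefficients $\varphi_T$ of $F$ with $T$ of size $n-1$ and odd square-free discriminant, which is the claim.

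The main (and really the only) obstacle is the weight accounting: one must be sure that passing from $\rho$ to $\rho' \otimes \mrm{Sym}^{\nu_0}$ does not drop the determinantal weight below $C$, and for this I rely on the monotonicity fact recorded in section~\ref{prelim}. A secondary point worth a sentence is that one should note the minimal Taylor order $\nu_0$ is finite (i.e. $F^o$ is well-defined and nonzero) precisely because $F \ne 0$ as a holomorphic function of $\mk z$, which is part of \propref{vec}. Everything else is immediate from the cited results.
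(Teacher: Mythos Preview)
Your proposal is correct and follows exactly the argument the paper intends: the corollary is stated immediately after \propref{vec} and \eqref{n-1fj} precisely because it is meant to follow from them together with the induction hypothesis, and you have spelled out these steps faithfully, including the weight-monotonicity check from section~\ref{prelim} that the paper relies on tacitly here (and makes explicit only later in section~\ref{concl}).
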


\subsection{Step 2: Reduction to the case of scalar-valued Jacobi forms} \label{jsca}
The Fourier-Jacobi coefficients of (the vector-valued) $F$ do have a theta expansion, which is more complicated than
in the scalar-valued case. We do not pursue writing this down, but our aim here is to show that we may choose a nonzero (vector-)
component of $\varphi_T$, which  
behaves as a Jacobi form in the scalar-valued case. Throughout this section, we use the block decomposition \eqref{decompo}.


We consider the  two transformation laws responsible for the theta expansion. The first one is
\[ \varphi_T(\tau,\z+r)=\varphi_T(t,\z)\]
for any $r\in \mf Z^{(1,n-1)}$; 
this comes from the transformation law of $F$ for the matrix 
$\begin{psmallmatrix} 1_n & S\\
0 & 1_n\end{psmallmatrix}$ with $S= \begin{psmallmatrix} 0 & r\\
r^t & 0_{n-1}\end{psmallmatrix}$.
The second transformation law is obtained from
$M=\begin{psmallmatrix} U^t & 0\\
0 & U^{-1}\end{psmallmatrix} \in \spn $ with $U= \begin{psmallmatrix} 1 & \ell\\
0 & 1_{n-1}\end{psmallmatrix}$; here $\ell \in \mf Z^{(1,n-1)}$ and
it gives
\begin{equation}\varphi_T(\tau,\tau\ell+\z)=\rho(U^{-1})\varphi_T(\tau,\z)
e(- \left(T[\ell^t]\tau +2 \cdot T   \ell^t \z \right))\label{Lie}.\end{equation}

Let $\Delta_n \subset \glnc$ be the subgroup of all upper triangular matrices. Since $\Delta_n$ is a connected, solvable algebraic group, by the Lie-Kolchin theorem (cf. \cite[Thm.~10.5]{borel}), there exist a basis of $V$ such that the set $\rho(\gamma)$ is upper triangular for all $\gamma \in \Delta_n$. Thus without loss, from now on we assume that $V= \mf C^m$ and that all elements of $\rho(\Delta_n)$ are upper triangular.

We view $\varphi_T$ as a $\mf C^m$-valued function,
$\varphi_T= (\varphi_T^{(1)},\dots ,\varphi_T^{(m)})^t$.
We put
\[r:= \max \{i\,\mid 1\leq i\leq m, \varphi_T^{(i)}\not=0\}.\]
For this $r$, the equation (\ref{Lie}) reads
\[\varphi^{(r)}_T(\tau,\tau\ell+\z)=\varphi^{(r)}_T(\tau,\z)
e(- \left(T[\ell^t]\tau +2 \cdot T   \ell^t \z \right)).
\]
We must finally check the transformation law for $\sltwo$. Namely:
\[
e(\frac{-c}{c\tau+d}\z T\z^t) \, \rho \left(\begin{pmatrix} c\tau+d & c\z\\
0 & 1_{n-1}\end{pmatrix} \right)^{-1} \varphi_T \left(\frac{a\tau+b}{c\tau+d}, \frac{\z}{c\tau+d} \right)= \varphi_T(\tau,\z)\]
gives (when applied to $\varphi_T^{(r)}$) the requested transformation property with weight $k'$ given by
\[\rho(\mrm{diag}(\lambda,1\dots , 1))=  (g_{ij}(\lambda)) \]
with $\lambda\longmapsto g_{rr}(\lambda)$ being a (polynomial) character of $\mrm{GL}(1, \mf C)$, i.e. $g_{rr}(\lambda)=\lambda^{k'}$ for some integer $k'\geq 0$. This follows by looking at the $r$-th components on both sides of the above equation and here we crucially use the property of $\rho(\Delta_n)$ to be upper triangular.

This means that $\varphi_T^{(r)}$  is a non-vanishing scalar-valued Jacobi form of weight $k' \geq k(\rho)$. Summarizing, we have shown that:

\begin{prop} \label{sca-j}
If $\varphi_T \neq 0$ is a vector-valued Jacobi form of index $T$ with respect to $\rho$ in the sense of subsection~\ref{jacobi}, then there exist a component of $\varphi^{(r)}_T$ of $\varphi_T$ which is a scalar-valued Jacobi form (of integral weight $k' \geq k(\rho)$) in the sense of \cite{EZ}.
\end{prop}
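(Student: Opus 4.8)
The plan is to extract a single scalar component of the vector-valued Jacobi form $\varphi_T$ that satisfies all three families of transformation laws defining a scalar Jacobi form in the Eichler--Zagier sense, and to do this by exploiting upper-triangularity of $\rho$ restricted to the Borel. First I would invoke the Lie--Kolchin theorem to choose a basis of $V$ in which $\rho(\gamma)$ is upper triangular for every $\gamma$ in the group $\Delta_n$ of upper triangular matrices in $\mrm{GL}(n,\mf C)$; this is legitimate since $\Delta_n$ is connected and solvable. After this normalization, $V = \mf C^m$ and $\varphi_T = (\varphi_T^{(1)}, \dots, \varphi_T^{(m)})^t$, and I would single out the \emph{last} nonzero component, i.e. $r := \max\{ i : \varphi_T^{(i)} \neq 0 \}$.

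The key point is that each of the three transformation laws we need involves only matrices $U = \begin{psmallmatrix} 1 & \ell \\ 0 & 1_{n-1} \end{psmallmatrix}$, $\mrm{diag}(c\tau+d, 1_{n-1})$ (up to a nilpotent upper-triangular factor $\begin{psmallmatrix} c\tau+d & c\z \\ 0 & 1_{n-1} \end{psmallmatrix}$), and $\mrm{diag}(\lambda,1,\dots,1)$ — all of which lie in $\Delta_n$. So $\rho$ evaluated at them is upper triangular, and reading off the $r$-th coordinate of the vector identity $\varphi_T \mid_\rho g = \varphi_T$ only picks up the $(r,r)$-entry of $\rho(g)^{-1}$ acting on $\varphi_T^{(r)}$: the off-diagonal entries would multiply components $\varphi_T^{(j)}$ with $j > r$, which all vanish by maximality of $r$. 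The translation law $\varphi_T(\tau, \z+\mu) = \varphi_T(\tau,\z)$ is inherited componentwise with no automorphy factor. The elliptic transformation \eqref{Lie} passes to $\varphi_T^{(r)}$ because $\rho(U^{-1})$ is upper triangular; the $\mrm{SL}(2,\mf Z)$ transformation passes because $\rho$ of the relevant parabolic element is upper triangular and its $(r,r)$-entry, being a polynomial character of $\mrm{GL}(1,\mf C)$, equals $\lambda \mapsto \lambda^{k'}$ for some integer $k' \geq 0$. Finally, since the determinantal weight $k(\rho)$ does not decrease under restriction or tensoring and the diagonal character $\lambda \mapsto g_{rr}(\lambda) = \lambda^{k'}$ of $\rho|_{\mrm{GL}(1)}$ occurs as a sub-quotient, one reads off $k' \geq k(\rho)$; holomorphy and the correct growth at the cusp are inherited from $\varphi_T$. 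Putting these together shows $\varphi_T^{(r)} \in J_{k',T}$ in the classical sense, which is exactly \propref{sca-j}.

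The main obstacle, and really the only subtle point, is bookkeeping the interplay between the nilpotent (unipotent) part and the diagonal part of $\rho$ on the parabolic elements $\begin{psmallmatrix} 1 & 0 \\ c\z^t & cZ+d \end{psmallmatrix}$-type matrices, i.e. making rigorous that multiplying by the strictly-upper-triangular correction $P(\mk z)$ (or its analogue here) cannot feed anything back into the $r$-th slot. This is resolved precisely by the choice $r = \max\{ i : \varphi_T^{(i)} \neq 0\}$: upper-triangularity means $(\rho(g)^{-1})_{rj} = 0$ for $j < r$, while $\varphi_T^{(j)} = 0$ for $j > r$, so only the $(r,r)$ diagonal entry survives — a clean argument once the ordering convention is fixed, but one that must be stated carefully since it is exactly where the hypothesis that $\rho$ is polynomial (hence $g_{rr}$ is a genuine polynomial character with $k' \in \mf Z_{\geq 0}$) and the Lie--Kolchin normalization both get used.
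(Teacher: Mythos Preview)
Your proposal is correct and follows essentially the same approach as the paper: invoke Lie--Kolchin to upper-triangularize $\rho(\Delta_n)$, take $r=\max\{i:\varphi_T^{(i)}\ne 0\}$, and read off the three Jacobi transformation laws on the $r$-th component, where upper-triangularity together with the maximality of $r$ isolates the diagonal entry $g_{rr}(\lambda)=\lambda^{k'}$. The only cosmetic slip is that the parabolic element in your last paragraph should be $\begin{psmallmatrix} c\tau+d & c\z \\ 0 & 1_{n-1} \end{psmallmatrix}$ (the one governing the $\sltwo$-law in this step) rather than the block $\begin{psmallmatrix} 1 & 0 \\ c\z^t & cZ+d \end{psmallmatrix}$ from \propref{vec}, but your reasoning is unaffected.
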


\subsection{Step 3: On primitive components of theta expansions}

We now work with the scalar-valued Jacobi form $\varphi_T^{(r)}$ from the previous section. More generally we prove the existence of `primitive' theta components of any such form whose index has absolute discriminant odd and square-free. Note the switch from $n-1$ to $n$ in this subsection, for convenience. We would later apply the result of this subsection to a $T \in \Lambda_{n-1}^+$. The following result is independent of the previous considerations.

\begin{prop} \label{primh}
Let $M \in \Lambda_n^+$ be such that $d=d_M$ is odd and square-free.
Let 
\[\phi_M(\tau,z)=\sum_{\mu} h_{\mu}(\tau) \Theta_M[\mu](\tau,z) \in J_{k, M} \]
be the theta decomposition of $\phi_M$ (see e.g., \cite[p.~210]{Zi}). Assume that $h_{\mu}=0$ for all
primitive $\mu$. Then $\phi_M=0$.
\end{prop}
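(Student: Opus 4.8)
The plan is to exploit the action of the full Jacobi group (in particular the $\mathrm{SL}(2,\mf Z)$ part) on the vector $(h_\mu)_\mu$ of theta components. Recall that under the theta decomposition, the finite-dimensional space spanned by the $\Theta_M[\mu]$ carries a projective (Weil) representation $W$ of $\mathrm{SL}(2,\mf Z)$ indexed by $\mu \in \mf Z^n/(2M)\mf Z^n$, and the modularity of $\phi_M$ forces the vector $\mathbf{h} = (h_\mu)_\mu$ to transform under the dual/conjugate representation. Concretely, the coordinate permutation $\mu \mapsto U^t\mu$ for $U \in \mrm{GL}(n,\mf Z)$ with $M[U]=M$, and the twists coming from $\begin{psmallmatrix} 1 & 0 \\ r & 1\end{psmallmatrix}$-type elements and the $S$-transformation, together generate enough symmetry that the set of indices $\mu$ for which $h_\mu \not\equiv 0$ is stable under a large group. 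The key point is that the ``non-primitive'' indices, i.e. those $\mu$ for which $\tfrac14 M^{-1}[\mu]$ has denominator strictly smaller than $d$ (resp. $4d$), do \emph{not} form a subset stable under this group unless it is everything or empty; hence if $h_\mu = 0$ for all primitive $\mu$, one propagates this vanishing to all $\mu$.

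The steps I would carry out, in order. First, I would record the precise transformation law of $\mathbf h$ under $T = \begin{psmallmatrix} 1 & 1 \\ 0 & 1\end{psmallmatrix}$ and $S = \begin{psmallmatrix} 0 & -1 \\ 1 & 0\end{psmallmatrix}$: under $T$, $h_\mu \mapsto e(M^{-1}[\mu/2])\, h_\mu$ is multiplied by a root of unity (diagonal action), while under $S$ one gets $h_\mu \mapsto (\text{Gauss-sum constant}) \sum_\nu e(\langle \mu,\nu\rangle_{M}) h_\nu$, a ``finite Fourier transform'' over the discriminant group $D_M = \mf Z^n/(2M)\mf Z^n$ with its quadratic form $q_M(\mu) = \tfrac14 M^{-1}[\mu] \bmod \mf Z$. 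Second, I would argue that since $d = d_M$ is odd and square-free, the discriminant form $(D_M, q_M)$ is (by the local analysis already set up in the proof of \propref{prim}, via $2M \sim \mbb H \perp \cdots \perp \mbb H \perp \langle *\rangle$ locally) a very simple object: at each odd $p \mid d$ it is a cyclic group of order $p$ (or $p^2$ split off a hyperbolic part that contributes nothing), and at $p=2$ in the odd-$n$ case it contributes a fixed factor of order $4$. The primitive $\mu$ are exactly the elements of $D_M$ whose projection to each $p$-part is of maximal order. Third, I would show: if $h_\mu = 0$ for all $\mu$ of maximal order at every $p$, then applying the $S$-transformation (the finite Fourier transform on $D_M$) forces $h_\nu = 0$ for \emph{all} $\nu$. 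This is the crux — it is a statement purely about the finite Fourier transform on the finite quadratic module $D_M$: the indicator function of the non-maximal-order locus is not an eigen-configuration that can be annihilated only on the complementary (primitive) locus; equivalently, a function on $D_M$ supported on the non-primitive set whose Fourier transform is \emph{also} supported on the non-primitive set must vanish, because the primitive set generates $D_M$ and its Fourier-dual characters separate points. Since $D_M$ factors as a product over $p \mid d$ of these elementary pieces, one reduces to the one-prime case (a cyclic group of prime or prime-squared order), where the claim is an explicit and easy computation with Gauss sums.

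The main obstacle I anticipate is the bookkeeping at $p = 2$ when $n$ is odd: the dyadic component $\mbb H \perp \cdots \perp \mbb H \perp \langle 2\rangle$ must be handled by hand (the Weil representation of $\mathrm{SL}(2,\mf Z)$ on a $2$-adic Jordan block of this shape involves the characteristic $\theta$-multiplier subtleties, and ``primitive'' there means $\mu_n$ odd in the normal form), and one has to make sure the notion of primitivity used in \propref{prim} matches the one that makes the finite-Fourier-transform argument go through. A clean way to sidestep heavy Weil-representation machinery: rather than invoking the full $S$-transformation, use only the sub-collection of transformations $\begin{psmallmatrix} a & b \\ 0 & a^{-1}\end{psmallmatrix}$ (scaling $\mu \mapsto a\mu$ on $D_M$) together with the translations $z \mapsto z+\lambda$, $z \mapsto z+\tau\lambda$ already written down in \eqref{Lie}, which interchange the various $h_\mu$ with the \emph{same} denominator; combined with a single use of $S$ to move a primitive index to a non-primitive one after a suitable $\mathrm{SL}(2)$-twist, this should suffice to force all $h_\mu=0$. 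I would present the argument prime-by-prime via the Chinese Remainder factorisation of $D_M$, so that the whole proof reduces to the two elementary local cases ($p$ odd, and $p=2$ with $n$ odd), each a short Gauss-sum computation.
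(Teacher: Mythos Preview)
Your step~3 contains a genuine gap: the claim that a function on $D_M$ supported on the non-primitive locus $N$ whose finite Fourier transform is also supported on $N$ must vanish is \emph{false} as soon as $d$ has at least two prime factors. Take $n$ even so that (after the local diagonalisation) $D_M \cong \mf Z/d\mf Z$ with pairing $e(\mu\nu/d)$, and set $d=15$, $f = \mathbf 1_{3\mf Z/15\mf Z}$. Then $f$ is supported on $\{0,3,6,9,12\}\subset N$ while $\hat f = 5\cdot\mathbf 1_{5\mf Z/15\mf Z}$ is supported on $\{0,5,10\}\subset N$, yet $f\ne 0$; more generally $\mathbf 1_{p\mf Z/d\mf Z}$ works for any $p\mid d$ with $d/p>1$. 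The justification you give --- that primitive elements generate $D_M$ and the associated characters separate points --- is correct but only says the columns of $\bigl(e(\langle\mu,\nu\rangle)\bigr)_{\mu\text{ prim.},\,\nu\in N}$ are pairwise distinct, not linearly independent.

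What the paper does, and what your outline omits, is to \emph{use} the $T$-action you recorded in step~1. After the $S$-relation $\sum_\nu e(\tfrac12\langle\nu,\mu\rangle)h_\nu(\tau)=0$ (for primitive $\mu$, all $\tau$), one substitutes $\tau\mapsto\tau+t$ and invokes $h_\nu(\tau+t)=e(-q_M(\nu)t)\,h_\nu(\tau)$; linear independence of the characters $t\mapsto e(-q_M(\nu)t)$ of $\mf Z$ then splits the sum according to the value of $q_M(\nu)\bmod \mf Z$. For each such class $C=\{\nu:q_M(\nu)\equiv q_M(\nu^o)\}$ one obtains $\sum_{\nu\in C} e(\tfrac12\langle\nu,\mu\rangle)h_\nu=0$ for all primitive $\mu$, and it is \emph{this} smaller system whose coefficient matrix has rank $|C|$ --- the paper reduces to $\mf Z^n/2M\mf Z^n\cong\mf Z/d'\mf Z$ via strong approximation and verifies the rank by induction on the number of primes in $d$ using Kronecker products. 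Your counterexample $\mathbf 1_{3\mf Z/15\mf Z}$ is not supported in a single $q_M$-class (for $\nu\in\{0,3,6,9,12\}$ one has $\nu^2\bmod 15\in\{0,9,6,6,9\}$), so the $T$-refinement eliminates it. The fallback via upper-triangular matrices $\begin{psmallmatrix} a & b\\ 0 & a^{-1}\end{psmallmatrix}$ does not repair this either: those act by $\mu\mapsto a\mu$ and preserve primitivity, and a single application of $S$ is a Fourier transform, not a permutation of indices, so it cannot ``move a primitive index to a non-primitive one'' in the way you need.
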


\begin{rmk}
\begin{enumerate}

\item[(1)]
This property is weaker than irreducibility of the
theta repesentation; note that in the case of scalar index $m$
Skoruppa \cite{Sko} showed that irreducibility 
holds only for index $m=1$ or a prime $p$.

\item[(2)]
This result is expected to hold only when $M$ is a maximal lattice, for example when $n=1$ and $d_M$ is not square-free, there are non-zero Jacobi forms with vanishing primitive $h_\mu$ which `come' from index-old forms, see \cite[Lem.~3.1]{SZ}.
\end{enumerate}

\end{rmk}

\noindent{\it Proof of \propref{primh}.}
The proof is rather long, and has been divided into several parts for convenience.
Let $\phi_M$ be a Jacobi form satisfying the assumption of the proposition.
Then for all primitive $\mu$, by considering $h_{\mu} |\begin{psmallmatrix}
0 & -1\\
1 & 0\end{psmallmatrix}$, we obtain the relation
\[\sum_{\nu\in \mf Z^n/2M \cdot\mf Z^n } e(  \frac{1}{2} \langle\nu,\mu \rangle)  h_{\nu}=0,\]
where for vectors $\nu,\mu$ as above, we have put $ \langle\nu,\mu \rangle = \nu^tM^{-1}\mu$.

By applying translations $\tau\longmapsto \tau+t$ with $t\in \mf Z$ the equation above becomes
\[\sum_{\nu\in \mf Z^n/2M\mf Z^n } e ( \frac{1}{2} \langle\nu,\mu \rangle +   \frac{1}{4} \langle\nu,\nu \rangle \,  t)
\cdot h_{\nu}=0. \]

We observe that $\nu$ and $\nu'$ define the same character 
$t\longmapsto e( \frac{1}{4} \langle\nu,\nu \rangle \, t) $
of $(\mf Z,+)$
if and only if
\[
\frac{1}{4} \langle\nu,\nu \rangle  - \frac{1}{4} \langle\nu',\nu' \rangle  = \frac{1}{4}M^{-1}[\nu]-\frac{1}{4}M^{-1}[\nu']\in \mf Z.
\]
Using the linear independence of pairwise different characters
we get a refined system of equations: we fix some $\nu^o\in \mf Z^n$ and are led to consider 
\begin{equation} \label{hmueq}
\sum_{\substack{  \nu\in \mf Z^n/2M\mf Z^n \\
\frac{1}{4}M^{-1}[\nu]-\frac{1}{4}M^{-1}[\nu^o]\in \mf Z } } e( \frac{1}{2} \langle\nu,\mu \rangle  ) \cdot h_{\nu}=0.
\end{equation}
Only the case of imprimitive $\nu_0$ is of interest here, since for primitive $\nu_0$, all $\nu$ appearing in the sum \eqref{hmueq} will also be primitive, and for these the $h_{\nu}$ are zero anyway. 

\noindent \te \textit{{\bf Claim~1:}} For all fixed $\nu^o$ the matrix $\left( e( \frac{1}{2}  \langle \nu,\mu \rangle )\right)_{\nu,\mu} $ (with $\mu$ varying over primitive vectors and $\nu$ varying over all vectors as in \eqref{hmueq}) is of maximal rank (equal to the cardinality of the set of $\nu$ occurring here).

Let us now grant ourselves the claim above and show how to finish the proof of \propref{primh}. Indeed, as noted above, we only need to show that all imprimitive $h_\mu$ are zero. The condition $\nu \sim \nu^o$ if and only if $\frac{1}{4}M^{-1}[\nu]-\frac{1}{4}M^{-1}[\nu^o]\in \mf Z$ defines an equivalence relation on the set of imprimitive indices, and \eqref{hmueq} along with {\bf Claim~1} just says that all the $h_\nu=0$ for all $\nu$ in the equivalence class of $\nu^o$. But since $\nu^o$ can be any arbitrary imprimitive index, we are done.

\subsubsection{Reduction to degree one} \label{redeg1}

Our aim is to show that we can reduce everything to the case of degree $1$, in other words, we show next that it is enough to prove the {\bf Claim~1} above when $M,\mu,\nu$ are scalars. The idea is to choose representatives of $\mf Z^n/2M  \cdot \mf Z^n$ which are similar to those for $n=1$. We give all details when $n$ is odd and indicate the main points for the other case.

\noindent \te \textbf{The case $n$ odd.} First of all, we may find $U\in \mrm{SL}(n,\mf Z)$
such that $\widetilde{M}:= (2M)[U]$ satisfies for $f \geq 2$
\begin{equation} \label{podd}
\wm \equiv \mrm{diag}(*,\dots *, \zeta d) \bmod d^f
\end{equation} 
where $*,\zeta$ are units $\bmod d$
and 
\begin{equation}\label{pev}
\wm\equiv \mbb H\perp \dots \mbb H\perp 2\bmod 2^f,
\end{equation}
where 
$\mathbb H$ denotes the hyperbolic plane $\begin{psmallmatrix} 0 & 1\\
1 & 0\end{psmallmatrix}$. 
Indeed, from the local theory of quadratic forms (see e.g., \cite[Chap.~8, Prop.~3.1, Lem.~4.1]{cas}) we can find for every $q \mid 2d$, matrices $U_q \in \mrm{SL}(n, \mf Z_q)$ such that (since $n$ is odd and $2d$ is square-free), 
\begin{equation}
(2M)[U_q] = \begin{cases}    \mrm{diag}(*,\dots *, q)  &\text  { if } q \neq 2  \\   \mbb H\perp \dots \mbb H\perp 2      &\text  { if } q = 2 \end{cases}.
\end{equation}
The reader may note that a priori (with the convention in \cite{cas}) we can only get a $V_q \in \mrm{GL}(n, \mf Z_q)$ with the above property; but we can assume $V_q \in \mrm{SL}(n, \mf Z_q)$ by multiplying $V_q$ on the left with the matrix $\mrm{diag}(\det(V_q)^{-1},1,\cdots1)$ without loss.

Then by strong approximation for $\mrm{SL}(n)$, we may find $U \in \slnz$ such that $U \equiv U_q \bmod q^f$ for any $f \ge 1$ for any prime $q \mid 2d$. This $U$ works. To get the statement \eqref{podd}, we use the Chinese remainder theorem for the moduli $q^f$ with $f \geq 2$.

As representatives of $\mf Z^n/ \wm \mf Z^n$
we may choose
\begin{equation} \label{repns}
\wnu:=\{ (0,0, \ldots,\wnu_n)^t \,\mid \, \wnu_n \bmod 2d\}.
\end{equation}
Using \eqref{podd}, \eqref{pev} we see that indeed these are pairwise inequivalent by checking locally and noting that the cardinality of this set is the right one. 

Let now $\mbb M=(m_{i,j})$ be the adjoint of $\wm$, so that $\wm \mbb M = 2d \cdot I_n$. Then for $\wnu,\wmu \in \mf Z^n/ \wm \mf Z^n$ (assuming they are in the nice form as in \eqref{repns}), we see that
\begin{equation} \label{madj}
\wnu^t \wm^{-1} \wmu = \frac{1}{2d} \wnu_n  m_{n,n} \wmu_n
\end{equation}
We claim that $(m_{n,n}, 2d)=1$. To see this, we multiply \eqref{podd}, \eqref{pev} by $\mbb M$ on both sides and compare the resulting congruences to obtain that
\[ \zeta d m_{n,n} \equiv 2d \bmod d^f, \q  2 m_{n,n}  \equiv 2d \bmod 2^f,\]
which clearly implies our above claim since $f \ge 2$.

Furthermore for $\wnu,\wnu^o \in \mf Z^n/ \wm \mf Z^n$, we see that
\begin{equation} \label{newrel1}
\frac{1}{2} \wm^{-1}[\wnu]- \frac{1}{2} \wm^{-1}[\wnu^o]\in \mf Z
\, \, \text{  if and only if  } \, \, \wnu_n^2\equiv (\wnu_n^o)^2\bmod 4d,
\end{equation}

Also it is clear from \eqref{madj} along with $(m_{n,n}, 2d)=1$ that $\wnu$ is primitive (with respect to $\wm$) if and only if $(\wnu_n,2d)=1$. 

These considerations allow us to reduce the case of odd $n$ to $n=1$ with $d'=2d$ as follows. We note that
\begin{equation} \label{newrel3}
  \nu^t (2M)^{-1} \mu = \nu^t U \wm^{-1} U^t \mu = \wnu^t \wm^{-1} \wmu, 
\end{equation} 
whence we make a change of variables $\wnu= U^t \nu, \wmu = U^t \mu$; and observe that as $\nu,\mu$ vary over $\mf Z^n/2M  \cdot \mf Z^n$, $\wnu,\wmu$ vary over $\mf Z^n/ U^t (2M) \cdot \mf Z^n = \mf Z^n/ \wm \mf Z^n$. Clearly $\nu$ is primitive for $M$ if and only if $\wnu$ is so for $\wm$. Moreover the condition on $\nu$ in \eqref{hmueq} can be seen to be exactly the one in \eqref{newrel1} upon using \eqref{newrel3}.  

The reduction to $n=1$ is now clear from \eqref{madj} and \eqref{newrel1} where in \eqref{madj} we make a change of variable $\wmu \mapsto m_{n,n}^{-1} \wmu \bmod {2d}$. We would spell this out explicitly at the end of this subsection after taking care of the analogous case of $n$ being even, see {\bf Claim~2} below.

\noindent \te \textbf{The case $n$ even.}  
We may find, arguing as in the previous case, an $U\in \mrm{SL}(n,\mf Z)$
such that $\wm:= (2M)[U]$ satisfies 
\[\wm \equiv \mrm{diag}(*,\dots *,d) \bmod d^f\]
where $*$ are units $\bmod d$
\[\wm\equiv \mbb H\perp \dots \perp \mbb H\bmod 4^f \q \text{ or } \q  \wm\equiv \mbb H\perp \dots \mbb H \perp {\mathbb F} \bmod 4^f,\] 
where $\mathbb F= \begin{psmallmatrix} 2 & 1 \\
1 & 2\end{psmallmatrix}$.

As representatives of $\mf Z^n/ 2M\mf Z^n$ we may choose
\[{\bf \nu}:=\{ ( 0, 0, \ldots, \nu_n)^t
\,\mid \, \nu_n \bmod d\}\] 
and $\nu$ is primitive if and only if $(\nu_n,d)=1$.
Furthermore,
\[
e( \frac{1}{2} \nu^t M^{-1}\mu )= e( \frac{1}{d} \nu_n m_{n,n} \mu_n )
\]
where the matrix adjoint of $\wm$ is $(m_{i,j})$ and
\[\frac{1}{4}M^{-1}[\nu]- \frac{1}{4}M^{-1}[\nu^o]\in \mf Z
\iff \nu_n^2\equiv (\nu_n^o)^2\bmod d\]

These considerations allow us to reduce the case of even $n$ to $n=1$ with $d'=d$. 

Summarizing, we now have to prove the following.

\noindent \te \textit{{\bf Claim~2:}}
Let $d$ be a square-free odd positive integer,
$d=p_1\cdot \dots p_t$.
To cover also even numbers we put (with the convention that $d'=2$ if $t=0$)
\[d':= \begin{cases}  d & \text{  if  } n \text{  is even}\\
2d & \text{  if  }  n \text{   is odd}  \end{cases} .\] 
We assume that $d'>1$.
We fix $\nu_0\bmod d'$. Then the following matrix has maximal rank: 
{\small
\begin{equation}
\left( e( \frac{\mu \nu  }{d'} ) \right)_{ \begin{array}{c}\mu \bmod d',(\mu,d')=1\\
\nu \bmod d', \nu^2\equiv \nu_0^2\bmod d'\end{array}   }.
\end{equation} }

\subsubsection{Proof of {\bf Claim~2}}
By the chinese remainder theorem, the set
\[\{\nu \bmod d'\, \mid\,\nu^2\equiv \nu_0^2\bmod d'\}\]
has cardinality $2^{t'}$, where $t'$ is the number of odd 
primes dividing $d$ and not dividing $\nu_0$;
note that in the case of even $d'$ we might as well describe the congruence by 
$\nu^2\equiv \nu_0^2 \bmod 4d$.

\begin{lem}
With all the conditions from above, we claim  that such a  matrix always has maximal rank (equal to the number of columns $2^{t'}$).
\end{lem}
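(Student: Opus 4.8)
The plan is to reduce the assertion to a statement about characters of the finite abelian group $(\mf Z/d'\mf Z)^\times$ and then to handle each prime factor of $d'$ separately via the Chinese remainder theorem. First I would note that since $d'$ is square-free, $\mf Z/d'\mf Z \cong \prod_i \mf Z/p_i\mf Z$ (with an extra factor $\mf Z/2\mf Z$ in the odd-$n$ case), and under this isomorphism the matrix $\left(e(\mu\nu/d')\right)$ factors as a tensor (Kronecker) product of the analogous matrices one for each prime $\ell \mid d'$. A tensor product of matrices has maximal rank (equal to the product of the individual ranks, hence to the number of columns) provided each factor does; so it suffices to prove the claim for $d'=\ell$ a single odd prime, and trivially for $d'=2$.

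Next I would set up the single-prime case. Fix $\ell$ an odd prime and $\nu_0 \bmod \ell$. If $\ell \nmid \nu_0$ there are exactly two solutions $\nu \equiv \pm\nu_1 \bmod\ell$ of $\nu^2\equiv\nu_0^2$, so the relevant matrix is $\left(e(\mu\nu_1/\ell) - e(-\mu\nu_1/\ell)\right)$-flanked, i.e. a $2$-column matrix with rows indexed by $\mu\in(\mf Z/\ell\mf Z)^\times$; here one must show the two columns $\bigl(e(\mu\nu_1/\ell)\bigr)_\mu$ and $\bigl(e(-\mu\nu_1/\ell)\bigr)_\mu$ are linearly independent. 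If $\ell \mid \nu_0$ there is a single solution $\nu\equiv 0$, the matrix is a single column of all-ones, and one only needs it to be nonzero, which is clear. For the two-column subcase, linear independence follows because these are restrictions to the unit group of two distinct additive characters $x \mapsto e(\pm\nu_1 x/\ell)$ of $\mf Z/\ell\mf Z$; a nontrivial linear relation $a\,e(\nu_1\mu/\ell) + b\,e(-\nu_1\mu/\ell)=0$ for all units $\mu$ would, after summing against a multiplicative character or simply counting, force $a=b=0$. I would give the short self-contained argument: summing the relation over all $\mu \in (\mf Z/\ell\mf Z)^\times$ gives $(a+b)\sum_{\mu}e(\nu_1\mu/\ell) = -(a+b)$ (using $\sum_{\mu\in\mf Z/\ell\mf Z}e(\nu_1\mu/\ell)=0$), and multiplying the relation by $e(2\nu_1\mu/\ell)$ before summing gives another independent equation, forcing $a=b=0$.

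Then I would assemble the pieces: writing $d' = \prod_{\ell \mid d'}\ell$ and $\mu = (\mu_\ell)_\ell$, $\nu=(\nu_\ell)_\ell$, $\nu_0=((\nu_0)_\ell)_\ell$ under CRT, we have $e(\mu\nu/d') = \prod_\ell e(\mu_\ell\nu_\ell e_\ell/d')$ where $e_\ell$ are the CRT idempotent coefficients; reindexing absorbs the $e_\ell$ into a bijective relabeling of the $\mu_\ell$'s (which preserves both the unit condition and the column set), so the big matrix is exactly the tensor product $\bigotimes_{\ell} A_\ell$ of the single-prime matrices $A_\ell$. Since each $A_\ell$ has full column rank (rank $2$ if $\ell\nmid(\nu_0)_\ell$, rank $1$ otherwise) and the number of columns of the product is $\prod_\ell (\text{columns of }A_\ell) = 2^{t'}$, the product has rank $2^{t'}$, which is the desired maximal rank. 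The main obstacle I anticipate is purely bookkeeping: carefully tracking how the CRT idempotents $e_\ell$ act on the index sets so that the factorization into a genuine tensor product is clean, and making sure the two-column linear independence argument at each odd prime is stated so it does not secretly require $\ell$ to be large. Everything else — the tensor-product rank fact and the character orthogonality — is standard.
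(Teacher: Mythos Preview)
Your approach is essentially the paper's: factor the matrix via the Chinese remainder theorem as a Kronecker product of single-prime matrices, verify full column rank for each prime, and conclude from $\mathrm{rank}(A\otimes B)=\mathrm{rank}(A)\cdot\mathrm{rank}(B)$. The paper phrases this as an induction on the number $t$ of odd prime factors (peeling off one prime at a time and treating the cases $d'=p$ and $d'=2p$ as base), whereas you do the full tensor decomposition at once; the content is the same.

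There is one genuine slip in your single-prime argument. After summing the relation $a\,e(\nu_1\mu/\ell)+b\,e(-\nu_1\mu/\ell)=0$ over units to get $a+b=0$, you propose multiplying by $e(2\nu_1\mu/\ell)$ and summing again to obtain a second independent equation. But this gives
\[
a\sum_{\mu\in(\mathbf Z/\ell\mathbf Z)^\times} e(3\nu_1\mu/\ell)+b\sum_{\mu\in(\mathbf Z/\ell\mathbf Z)^\times} e(\nu_1\mu/\ell),
\]
and for $\ell>3$ both sums equal $-1$, so you recover $-a-b=0$, the \emph{same} equation. Your alternative suggestion of summing against a multiplicative character does work: for any odd character $\chi$ one gets (via Gauss sums) $a+\chi(-1)b=a-b=0$, and such $\chi$ exists for every odd prime $\ell$. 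The paper instead simply selects two rows $\mu=1$ and any unit $\mu\neq 1$ and computes the $2\times 2$ determinant
\[
e\!\left(\tfrac{(1-\mu)\nu_1}{\ell}\right)\Bigl(1-e\!\left(-\tfrac{2(1-\mu)\nu_1}{\ell}\right)\Bigr)\neq 0,
\]
which is the cleanest fix.
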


To prove the above lemma, we argue by induction on $t$. For $t=0$ and $d'=2$ the matrix in question is just a non-zero scalar. For $t=1$ we consider two cases:

\noindent \te{\bf   Case I: $t=1, d'=p$.}
If $\nu_0\equiv 0\bmod p$ the matrix in question is a nonzero column.

Now we look at $p\nmid \nu_0$. We have to consider $\nu=\pm \nu_0$; with a $\mu$ still to be determined so that the matrix of size $2$ as below
\[
\begin{pmatrix} 
e( \frac{ \nu_0 }{p} ) & e(-\frac{ \nu_0 }{p} )   \\
e( \frac{ \mu \nu_0 }{p} )  & e( -\frac{ \mu \nu_0 }{p} )  
\end{pmatrix}
\] 
whose determinant is equal to
\[ 
e( \frac{ (1-\mu) \nu_0 }{p} ) \left(  1- e( -\frac{ 2(1-\mu) \nu_0 }{p} )   \right)
; \] 
should have the determinant non-zero. This clearly implies the maximality of the rank of the original matrix.

We may choose any  $\mu$, coprime to $p$ and different from $1$. This settles case~I.

\noindent \te{\bf  Case II: $t=1, d'=2p$.}
This works similarly:
if $\nu_0\equiv 0\bmod p$, the matrix is again just a nonzero column.

Now we assume $p\nmid \nu_0$: we have to consider $\nu=\pm \nu_0$; and
with a $\mu$ still to be determined we look at the matrix of size $2$:
\[
\begin{pmatrix} 
e( \frac{ \nu_0 }{2p} ) & e(-\frac{ \nu_0 }{2p} )   \\
e( \frac{ \mu \nu_0 }{2p} )  & e( -\frac{ \mu \nu_0 }{2p} )  
\end{pmatrix}.
\] 
The determinant is equal to
\[ 
e( \frac{ (1-\mu) \nu_0 }{2p} ) \left(  1- e( -\frac{ (1-\mu) \nu_0 }{p} )   \right).
\] 
We may choose any $\mu$ coprime to $2p$ and different from $1$. Thus the lemma follows in this case as well. 

\noindent \te{\bf Induction step $t\longmapsto t+1$ with $t\geq 1$:}

We write $q$ for the prime $p_{t+1}$. We decompose $\nu_0 \bmod d'q$ as
\[\nu_0= d'\nu_0'+q\nu_0^{''}\]
with $\nu_0'\bmod q$ and $\nu_0^{''}\bmod d'$
and similarly for $\nu$ and $\mu$.
Then
\[\frac{\mu\nu}{d' q}= \frac{ (d'\mu'+q\mu^{''})(d'\nu'+q\nu^{''})}{d'q}
=
\frac{d'\mu'\nu'}{q}+\frac{q\mu''\nu''}{d'}\bmod \mf Z.
\]
The matrix attached to $d'q$ and $\nu_0$ is then the tensor product (``Kronecker product'')
of the matrices attached to $q$ and $d'\cdot \nu_0'$ and attached to $d'$ and $q\cdot \nu_0''$. The induction step follows from well known property of Kronecker products that the rank of $A\otimes B$ is the product of the ranks of $A$ and $B$.

This finishes the proof of {\bf Claim~2} and hence also that of \propref{primh}. \QEDB

The following lemma, which will be used later, implies that when $k$ is even, \textsl{all} the theta components of a non-cuspidal scalar-valued Jacobi form of index $M$ with $d_M$ odd, square-free, are also non-cuspidal. 
\begin{lem} \label{jncusp}
Let $M \in \Lambda_n^+$ be such that $d=d_M$ is odd and square-free. Suppose that $\phi \in J_{k,M}$ be non-cuspidal and that $k > n+2$ is even. Then all the theta components $h\mu$ of $\phi$ are also non-cuspidal.
\end{lem}
In particular for a non-zero $\phi$ as above, there always exist a non-zero, non-cuspidal primitive theta component. Also we think that probably \lemref{jncusp} holds for all discriminants even in the vector-valued case (proving either of which however seems non-trivial).

\begin{proof}
Let $J_{k,M}^{\mc E}$ and $J_{k,M}^{cusp}$ be the spaces of Eisenstein series and cusp forms respectively.
We write $\phi = \phi_{\mc E} + \phi_{c}$ where $\phi_{\mc E}  \in J_{k,M}^{\mc E}$ and $\phi_{c} \in J_{k,M}^{cusp}$. Since $\phi$ is not a cusp form, $\phi_{\mc E} \neq 0$. 

Let us now recall from \cite[Lem.~3.3.14]{AJ} that
\[ \dim J_{k,M}^{\mc E}= \frac{1}{2} \left( \# \mrm{Iso}(\mc L_M^{\#}/\mc L_M) +(-1)^k \# \{\ \gamma \in \mrm{Iso}(\mc L_M^{\#}/ \mc L_M) \mid 2 \gamma \in \mc L_M   \}  \right)  .\]
where $\mc L_M$ is the lattice associated with $M$ (cf. section~\ref{maxi}) and  $\mrm{Iso}(\mc L_M^{\#}/\mc L_M)$ is the set of isotropic elements in the discriminant form $\mc L_M^{\#}/\mc L_M$ associated to $M$. It is now easy to check (note the normalisation of the quadratic form on \cite[p.~12]{AJ}) for $M$ as in the lemma that $\mrm{Iso}(\mc L_M^{\#}/\mc L_M)$ is just trivial, and this follows precisely from \eqref{isoeq} if we note that $\mc L_M^{\#} = ((2M)^{-1} \mf Z^n, \mu \mapsto (2M)[\mu])$. 

Thus $J_{k,M}^{\mc E} = \mf C\{  E_{k,M,0} \}$ in the notation of \cite{AJ} as $k$ is even. For the absolute convergence of $E_{k,M,0}$ we need here that $k > n/2 +2$.
Now from the results of \cite[Theorem~10]{Bo} $E_{k,M,0}$ appears as the $M$-th Fourier-Jacobi coefficient of the Siegel Eisenstein series, say $E^k_{n+1}$, of degree $n+1$. Here we need to assume that $k>n+2$. Indeed, Theorem~10 in \cite{Bo} describes explicitly the Fourier-Jacobi expansion of any
Siegel Eisenstein series.
If $M$ is maximal, the summation over $w_1$ there is trivial,
which implies our claim.
It is well-known that \textsl{all} the Fourier coefficients of $E^k_{n+1}$ are non-zero (see eg. \cite[Theorem, p.~115]{Ki1}). Thus all the Fourier coefficients of $E_{k,M,0}$ are non-zero -- and thus so are all its theta components.
\end{proof}

\subsection{Formulation of the desired properties of \texorpdfstring{$h_{\mu}$}{aa}   }
\label{2adjust}
We consider a nonzero  $F \in M^n_{\rho}$ and choose (by induction hypothesis)
a $T\in \Lambda_{n-1}^+$ with $d_T$ odd and square-free such that $\varphi_T$ is non-zero (cf. \corref{nton1}). Then we choose by \propref{sca-j} a suitable non-zero component $\varphi_T^{(r)}$ of $\varphi_T$, which is a scalar valued Jacobi form.

Let $(h_0, \dots h_{\mu},\dots )$ denote the components of the theta expansion of $\varphi_T^{(r)}$ (see \eqref{thetaexp}). By \propref{primh}, we get hold of a primitive $\mu$ such that $h_\mu \neq 0$. {\bf We work with this $h_\mu$ (of weight $k'-\frac{n-1}{2}$) for the rest of the paper}. 

The basic starting point is an equation of type
\begin{equation}
{\det}_n \mc T= {\det}_n( \begin{pmatrix} \ell & \frac{\mu}{2}\\
\frac{\mu^t}{2} & T\end{pmatrix})=
(\ell- \frac{1}{4}T^{-1}[\mu^t])\cdot {\det}_{n-1}(T) \label{start}
\end{equation}
for the determinant $\det_n(\mc T)$ of a half-integral matrix ${\mathcal T}$ of size $n$ occurring on the left hand side. 

Recall that $d_{\mc T}$ is the (absolute) discriminant of $2\mc T$ and similarly for $T$.

\subsubsection{Case $n$ is odd} We should multiply the equation \eqref{start} above by $2^{n-1}$:
\[d_{\mathcal T}= (\ell- \frac{1}{4} T^{-1}[\mu])\cdot d_T.\] 
The first factor has exact denominator $d_T$ since $\mu$ is primitive (see \propref{prim}). In the next section ({\it Part~B}) we would show that there are (infinitely many) non-vanishing Fourier coefficients
of $h_{\mu}$ for some primitive $\mu$ with summation index
\[\ell-\frac{1}{4}T^{-1}[\mu]= \frac{\alpha}{d_T},\]
(see \eqref{hmu-fe}) such that $\alpha $ is co-prime to $d$ (this is satisfied automatically by the primitiveness of $\mu$) and moreover that $\alpha$ square-free and odd.   

\subsubsection{Case $n$ is even}
Here we must multiply \eqref{start} by $2^n$ to get
\[d_{\mathcal T}= 4(\ell- \frac{1}{4}T^{-1}[\mu])\cdot d_T= 
(4 \ell-T^{-1}[\mu])\cdot d_T.\]

The middle factor has exact denominator $4d_T$ since $\mu$ is primitive (see \propref{prim}). In the next section ({\it Part~B}) we would show that there are (infinitely many) non-vanishing Fourier coefficients
of $h_{\mu}$ for some primitive $\mu$ with summation index (see \eqref{hmu-fe}) 
\[ \ell -\frac{1}{4}T^{-1}[\mu]= \frac{\alpha}{4d_T},\]
such that $\alpha $ is co-prime to $4d_T$ (this is satisfied automatically) and that $\alpha$ is odd and square-free.

The proposition below summarizes the findings from {\it Part~A} and also makes clear the role of {\it Part~B} in the remainder of the proof. Let us put
\begin{equation}\label{Hmu}
H_\mu(\tau) = \begin{cases} h_\mu(d_T \tau) \text{ if } n \text{ is odd, } \\  h_\mu(4d_T \tau) \text{ if } n \text{ is even}. \end{cases}
\end{equation}
We now apply the results of this section to $\varphi_T^{(r)}$ from \propref{sca-j} and keep in mind the Fourier expansion of $h_\mu$ from \eqref{hmu-fe} to arrive at the following.

\begin{prop} \label{parta}
Let $n \geq 2$ and $C \in \mf N$ be given. Assume that \thmref{mainthm} holds for
all non-zero forms in $M^{n-1}_\theta$ for all polynomial representations $\theta$ of $\mrm{GL}(n-1,\mf C)$ with $k(\theta)\ge C$. Consider a non-zero $F \in M^n_\rho$ where $\rho$ is any polynomial representation of $\glnc$ with $k(\rho)\ge C$.

Then there exist $T \in \Lambda_{n-1}^+$ with $d_T$ running over infinitely many odd, square-free numbers; and for each such $T$ there exist a primitive index $\mu \in \mf Z^{n-1} / (2T) \mf Z^{n-1}$ and an elliptic modular form $H_\mu(\tau) = \sum_{\ell \ge 1} a_\mu(\ell) q^{d'_T(\ell - \frac{1}{4} T^{-1}[\mu] ) }$ ($d'_T=d_T$ or $4d_T$ according as $n$ is odd or even) of weight at least $k(\rho) - (n-1)/2 $ as defined in \eqref{Hmu} with the following property:

$a_F(\mc T) \neq 0$ for $\mc T \in  \Lambda_{n}^+$ of the form $\mc T=\begin{psmallmatrix} \ell & \mu \\ \mu^t & T  \end{psmallmatrix}$ ($\ell \ge 1$) if $a_\mu(\ell) \neq 0$.

Such a $\mc T$ as above satisfies the property that $d_{\mc T}$ is odd and square-free provided one has that $d_T(\ell - \frac{1}{4} T^{-1}[\mu] )$ is odd and square-free.
\end{prop}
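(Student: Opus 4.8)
The plan is to assemble, in order, the results of this section; none of the work is hard, and the point of the proposition is precisely to package the preceding steps so that \textit{part~B} is handed a clean, purely one-variable statement. We are in the middle of the induction on the degree by which \thmref{mainthm} is proved, so we may assume \thmref{mainthm} in degree $n-1$; since the weight bound $k(\rho)\ge \tfrac{n}{2}+\tfrac32$ exceeds the threshold $\tfrac{n-1}{2}+\tfrac32$ needed in degree $n-1$, and the determinantal weight does not decrease under restriction to $\mrm{GL}(n-1,\mf C)$ or under tensoring with $\mrm{Sym}^{\nu_0}$ (cf.\ \propref{vec}), \corref{nton1} applies: the non-zero $F\in M^n_\rho$ has infinitely many non-vanishing Fourier-Jacobi coefficients $\varphi_T$ with $T\in\Lambda_{n-1}^+$ and $d_T$ odd and square-free. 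Fix one such $T$. By \propref{sca-j} some component $\varphi_T^{(r)}$ of $\varphi_T$ (in the basis making $\rho(\Delta_n)$ upper triangular) is a non-zero scalar-valued Jacobi form of index $T$ and integral weight $k'\ge k(\rho)$; write $\varphi_T^{(r)}=\sum_\mu h_\mu\,\Theta_T[\mu]$ for its theta expansion. As $\varphi_T^{(r)}$ is the $T$-th Fourier-Jacobi coefficient of the $r$-th coordinate of $F$, the relation \eqref{relation}, applied coordinate-wise, identifies the Fourier coefficients of each $h_\mu$ with the corresponding coordinate of $a_F(\mc T)$ for $\mc T=\begin{psmallmatrix}\ell & \mu/2\\ \mu^t/2 & T\end{psmallmatrix}$; hence any non-vanishing Fourier coefficient of any $h_\mu$ forces $a_F(\mc T)\ne0$.

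Next, since $d_T$ is odd and square-free and $\varphi_T^{(r)}\ne0$, \propref{primh} (in degree $n-1$) produces a primitive index $\mu$ with $h_\mu\ne0$. By \propref{prim} applied to the lattice $2T$ of size $n-1$, the number $\tfrac14 T^{-1}[\mu]$ has exact denominator $d_T$ when $n-1$ is even and $4d_T$ when $n-1$ is odd; so, with $d_T'$ as in \eqref{Hmu}, the function $H_\mu(\tau):=h_\mu(d_T'\tau)$ is an elliptic modular form of weight $k'-\tfrac{n-1}{2}$ (integral or half-integral according to the parity of $n$) on a suitable congruence subgroup, with $q$-expansion $\sum_\ell a_\mu(\ell)\,q^{\,d_T'(\ell-\frac14 T^{-1}[\mu])}$ after the reindexing of \eqref{hmu-fe}; the exponents that occur are positive integers, since $\tfrac14 T^{-1}[\mu]$ has denominator dividing $d_T'$ and positive-definiteness of $\mc T$ forces $\ell-\tfrac14 T^{-1}[\mu]>0$. (If $F$ is cuspidal then $h_\mu$, hence $H_\mu$, is a cusp form, which will matter only later in \textit{part~B}.)

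Finally, the determinant identity \eqref{start} reads $\det_n\mc T=(\ell-\tfrac14 T^{-1}[\mu])\det_{n-1}T$; multiplying by $2^{n-1}$ when $n$ is odd and by $2^n$ when $n$ is even, and using the relation between $\det_{n-1}T$ and $d_T$ exactly as in the two cases worked out just before the statement, one obtains $d_{\mc T}=d_T'(\ell-\tfrac14 T^{-1}[\mu])$ in both parities, which is precisely the exponent of $q$ in the $\ell$-th term of $H_\mu$; hence $d_{\mc T}$ is odd and square-free exactly when that exponent is. The one step I would write out with any care is the coordinate-wise application of \eqref{relation}, since that relation was recorded only for scalar-valued $F$ and here one must see that passing to the $r$-th coordinate of the vector-valued $F$ (rather than $F$ itself) does no harm; everything else is direct bookkeeping with \corref{nton1}, \propref{sca-j}, \propref{primh} and \propref{prim}.
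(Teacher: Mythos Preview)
Your proposal is correct and follows essentially the same route as the paper: the proposition is a summary of \textit{Part~A}, and you assemble \corref{nton1}, \propref{sca-j}, \propref{primh}, \propref{prim} and the determinant identity \eqref{start} in exactly the way the paper does in subsection~\ref{2adjust}. Your remark about applying \eqref{relation} coordinate-wise is the only point needing any care, and you have identified it correctly.
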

Therefore in {\it Part~B}, we must investigate the non-vanishing property of the Fourier coefficients $a_\mu(\ell)$. The reader would find the results summarized in \thmref{partb-c} and \thmref{partb-m}. Actually in our application of \propref{parta} to prove \thmref{mainthm} by induction on $n$ (see section~\ref{concl}), we would only need one such $T $ as in \propref{parta} for each of the induction steps. Say if we are at the $r$-th step (i.e., passing from $\mrm{Sp}(r)$ to $\mrm{Sp}(r+1)$, where $1 \le r \le n-1$), then we only need the non-vanishing of $\phi_T$ ($T \in \Lambda^+_r$) for one $T$; the statement about infinitely many
such $d_T$ follows from the corresponding property of the $a_{\mu}(\ell)$, where $\mu \in \mf Z^{r-1} / (2T) \mf Z^{r-1}$, $\ell \ge 1$.

\section{Part~B: The analytic part} \label{anaB}
We start with a non-zero $h_\mu$ with $\mu \in \mf Z^{n-1}/2T \mf Z^{n-1}$ being primitive (see \propref{parta}). The notion of $\mu$ being primitive with respect to $T$ can be found in \propref{prim}.
For the convenience of the reader, we remind that $h_\mu$ is a non-zero theta component of the scalar-valued Jacobi form $\varphi^{(r)}_T$ from section~\ref{jsca}, which in turn arises as a function component of the vector-valued Jacobi form $\varphi_T$. Moreover this $\varphi_T$ is a Fourier-Jacobi coefficient of $F$, see section~\ref{partas1}, especially \corref{nton1}. Note that $d_T$ is odd, square-free.

The arguments in this section are a little different according as $n$ is even or odd, but we try to treat them simultaneously. Put 
\begin{equation} \label{ddef}
\kappa = k- \frac{n-1}{2} , \q d = d_T,
\end{equation} 
and moreover for $N \geq 1$ and $\kappa \in \frac{1}{2} \mf Z$, we put (for the remainder of the paper)
\begin{equation} \label{n'}
 N'= \begin{cases} N \text{ if } \kappa \in \mf N, \\ 4N \text{ otherwise}. \end{cases} . 
\end{equation}
In any case we note that $h_\mu \in M_{\kappa}(\Gamma(d'))$ with $\kappa \in \mf Z$. Indeed, the transformation properties of the $h_\mu$ are inherited from those of the Jacobi form $\varphi^{(r)}_T$ and the Jacobi theta functions of matrix index (cf. \eqref{jtheta}). We can read these off from \cite[p.~210, eqn.~(1) and ~(2)]{Zi}. It is clear that the Weil representation defined in eg. \cite[Definition~5.1]{Str} is the same as that defined on the theta module $ \mf C \{ \Theta_T[\mu] \colon  \mu\in \mf Z^{n-1}/2T\cdot \mf Z^{n-1} \} $ (cf. section~\ref{jacobi} and and \cite[Lem.~3.2]{Zi}). Now the fact that $\phi^{(r)}_T = \sum_\mu h_\mu \cdot \Theta_T[\mu] $, implies that the tuple $ \{ h_\mu \}_\mu$ transforms via the dual of the Weil representation alluded to in the above. 
Thus our claim about the level of $h_\mu$ follows by noting that the kernel of the Weil representation attached to $2T$ factors through (and thus is trivial on) $\Gamma(d)$ when $n$ is odd (i.e., when $\kappa \in \mf Z$), and in general is trivial on $ \widetilde{\Gamma(\mrm{level} (2T))}= \widetilde{\Gamma(d') }$ (where $\widetilde{\Gamma}$ denotes the metaplectic cover of $\Gamma$, see e.g. \cite[Lem.~5.5]{Str} for this well-known result). For uniformity of notation, we suppress the tilde in $\widetilde{\Gamma}$ even when $\kappa \not \in \mf Z$.

Let us put $f=h_\mu(d' \tau)$. Therefore in \propref{parta} we take $H_\mu:=f$.

Then it is clear that $f \in M_\kappa(\Gamma_1(d'^2))$. Crucial for us would be the fact that the Fourier expansion of $f$ is supported away from its level (this follows from section~\ref{2adjust}, precisely because of the primitive-ness of $\mu$):
\begin{equation} \label{away}
f(\tau)=\ut{n \ge 1, \, (n,d')=1}\sum a_f(n) e(n \tau).
\end{equation}

For later purposes, we have to consider the modified cusp form
\begin{equation} \label{gdef}
g(\tau) := \ut{(n, \mf M)=1}\sum a_f(n) e(n \tau),
\end{equation}
where $\mf M$ (to be chosen later) is an odd, square-free integer containing all the prime factors of $d'$ if $\kappa$ is integral. We note that $g \in S_\kappa(\Gamma_1(d'^2 {\bf M'}^2))$, where $M'$ is the largest divisor of $\mf M$ coprime to $d'$. 

\begin{lem} \label{gn0}
 The modular form $g $ in \eqref{gdef} is non-zero. 
\end{lem}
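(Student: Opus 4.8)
\textbf{Proof proposal for Lemma \ref{gn0}.} The plan is to show that passing from $f$ to $g$ removes only finitely many Fourier coefficients, so that $g$ cannot vanish unless $f$ does. Recall that by \eqref{away} the Fourier expansion of $f$ is supported on integers $n$ with $(n,d')=1$, and that $g$ is obtained from $f$ by further discarding the coefficients $a_f(n)$ with $(n,\mf M)>1$, where $\mf M$ is a fixed square-free integer whose prime divisors include all primes dividing $d'$. Since $\mf M$ is square-free and every prime dividing $d'$ already divides $\mf M$, the condition $(n,\mf M)=1$ is equivalent to $(n,d'\mf M)=1$; thus $g$ is the ``$\mf M$-primitive part'' of $f$ in the sense that $g(\tau)=\sum_{(n,\mf M)=1} a_f(n)e(n\tau)$.

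First I would recall the standard fact that for a prime $p$ the operator $U_p$ (defined on $q$-expansions by $U_p\colon \sum a(n)q^n \mapsto \sum a(pn)q^n$) and the identity operator both preserve $M_\kappa(\Gamma_1(M))$ for suitable $M$ (here one uses that $p \mid d'$, so the level only grows by controlled powers of $p$; in the half-integral case one uses the metaplectic $U_p$, cf. Shimura). Then the projection onto coefficients coprime to $\mf M$ can be written as an inclusion-exclusion combination $g = \sum_{e \mid \mf M} \mu(e)\, V_e U_e f$, where $V_e\colon \sum a(n)q^n \mapsto \sum a(n) q^{en}$; this is a finite $\mathbf{C}$-linear combination of modular forms of bounded level, hence $g$ is a well-defined modular form (this is consistent with the stated membership $g \in S_\kappa(\Gamma_1(d'^2\mf M^2))$). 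The key point, however, is not the modularity of $g$ (already asserted) but its non-vanishing, which I would deduce directly from the $q$-expansion: if $g \equiv 0$ then $a_f(n)=0$ for all $n$ with $(n,\mf M)=1$; combined with \eqref{away}, which forces $a_f(n)=0$ whenever $(n,d')>1$, and the fact that every $n$ either is coprime to $\mf M$ or shares a prime factor with $\mf M$ — and in the latter case, since the primes of $\mf M$ that are ``new'' (not dividing $d'$) still need to be excluded — one must argue more carefully.

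The cleanest route avoiding this subtlety: observe that the primes dividing $\mf M$ but not $d'$ are harmless because I can \emph{recover} $a_f(n)$ for those $n$ from $g$ together with the already-vanishing coefficients. Concretely, I would instead argue by the theory of newforms / the structure of the Hecke action at primes $p \mid \mf M$, $p \nmid d'$: since $f$ has level $d'^2$ prime to $p$, $f$ is a linear combination of Hecke eigenforms of level prime to $p$, and for such eigenforms the coefficients $a(p^k)$ are determined by $a(p)$ via the Hecke recursion, with $a(p) \neq 0$ impossible to force to $0$ without the whole eigenform vanishing — more simply, a nonzero modular form of level prime to $p$ cannot have $a(n)=0$ for all $n$ coprime to $p$, because $U_p^\infty$-stabilization would then kill it. Thus if $g=0$, then $f$ has all coefficients coprime to $\mf M$ equal to zero; since $f$ already vanishes on coefficients not coprime to $d'$, and $\mf M$ has the same prime support as $d'$ \emph{plus possibly extra primes of level prime to the form}, iterating the argument prime by prime over $\mathfrak M$ forces $f=0$, contradicting $h_\mu \neq 0$. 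Hence $g \neq 0$.

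The main obstacle I anticipate is the half-integral weight case and the primes dividing $\mf M$ but not $d'$: one must ensure that ``a nonzero modular form of level prime to $p$ cannot be supported entirely on coefficients divisible by $p$'' holds in the metaplectic setting. I would handle this by invoking the trace/norm map from $\Gamma_1(Mp)$ down to $\Gamma_1(M)$, or equivalently by noting $f = (\text{coefficients coprime to } p) + V_p(\text{something})$ and that $V_p$ is injective with image having no constant-coprime part, so the two summands are linearly independent; vanishing of the first forces $f \in V_p M_\kappa$, and repeating this descent (a finite process, since coefficients are bounded polynomially in $n$, or simply since $f$ has finite ``$p$-adic valuation'' as a $q$-series) yields $f=0$. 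This descent is routine but is the one place requiring care in the half-integral metaplectic language; everything else is formal manipulation of $q$-expansions.
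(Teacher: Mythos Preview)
Your overall strategy agrees with the paper's: the heart of the argument is to show that for each prime $p$ dividing $\mathfrak{M}$ but not $d'$ (so $p$ is coprime to the level of $f$), a nonzero form on $\Gamma_1(N')$ with $p \nmid N'$ cannot have all Fourier coefficients at indices coprime to $p$ equal to zero. (Your opening sentence that ``only finitely many coefficients are removed'' is false --- infinitely many indices share a prime with $\mathfrak{M}$ --- but this slip does not affect the substance of your later paragraphs.) For integral weight your sketch via newform theory / $V_p$-descent can be made rigorous from classical results; the paper likewise defers to \cite[Prop.~5.8]{AD}, which in turn rests on \cite[Ch.~VIII, Thms.~3.1,~4.1]{Lang}.

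The genuine gap is in the half-integral case. You correctly flag this as the main obstacle, but your proposed fix does not close it. The claim that vanishing of the coprime-to-$p$ part forces $f \in V_p M_\kappa(\Gamma_1(N'))$ \emph{with the same level $N'$} is exactly the non-obvious step: a priori $U_p f$ lands only in $M_\kappa(\Gamma_1(N'p))$, so writing $f=V_p(U_pf)$ does not give a descent in level, and iterating cannot terminate without first proving the level-preservation you are assuming. The standard integral-weight proofs (trace from $\Gamma_1(N'p)$ down to $\Gamma_1(N')$, or the $W_p$-involution) do not transplant directly to the metaplectic cover for odd $p$ coprime to the level; the paper itself remarks that ``it is not immediately clear how to adapt the proof in \cite[Lem.~7]{Se-St} in our setting.'' So calling this step ``routine'' is where your argument actually breaks.

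The paper's repair --- which is what you would need to insert --- is a reduction to integral weight via a product: set $\mathbf{g}(\tau):=\mathbf{f}(\tau)\mathbf{f}(p\tau)\in M_{2\kappa}(\Gamma_1(N'p))$ (one could also take $\mathbf{f}^2\in M_{2\kappa}(\Gamma_1(N'))$). Since $a_{\mathbf g}(n)=\sum_{r+ps=n}a_{\mathbf f}(r)a_{\mathbf f}(s)$, the hypothesis on $\mathbf f$ forces $a_{\mathbf g}(n)=0$ for all $(n,p)=1$; now $2\kappa$ is integral, so the classical result \cite[Ch.~VIII, Thm.~3.1,~4.1]{Lang} applies and an order-at-$\infty$ comparison yields $\mathbf{f}=0$. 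Once this trick is inserted at the point where you wrote ``this descent is routine'', your argument becomes complete and matches the paper's.
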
 
 
\begin{proof}
In the case of integral weights, this essentially follows since $f$ satisfies the property in \eqref{away} and follows easily from classical oldform theory. Let us put $\mf M':= \mf M/d'$. Then $g=0$ implies that $a(f,n)=0$ for all $n$ such that $(n, \mf M')=1$. Since $(\mf M', d'^2)=1$, from \cite[Theorem~4.6.8~(1)]{miyake} it then follows that $f=0$, a contradiction.
\end{proof}

\lemref{gn0} is true for half-integral weights as well, even though we will not use this fact. Since this is a bit subtle and may have use elsewhere, we give a proof. In this case let us put $\mf M$ to be the square-free number appearing in \cite[Thm.~3]{saha} (denoted by $N_f$ there). Recall that we need to show that $g$ (as in \eqref{gdef}) is non-zero.

We refer the reader to \cite[Prop.~3.7]{saha} for the spaces $S_\kappa(N,\chi)$ and follow its proof. Inspecting the argument in the proof of \cite[Prop.~3.7]{saha}, it is clear that one needs to prove that if $0 \neq {\bf f} \in M_\kappa(\Gamma_1(N'))$ for some $N \geq 1$ with $a_{{\bf f}}(n)=0$ for all $n$ such that $(n,p)=1$, then $p \mid N'$ (and hence $p \mid N$, since we can and would assume that $p$ is odd, since $N' $ and $N_f$ (cf. \cite[proof of Prop.~3.4]{saha}) are both even and here we are concerned with those primes $p$ which divide $N_f$ but not $N'$). It is not immediately clear how to adapt the proof in \cite[Lem.~7]{Se-St} in our setting. The lemma below however completes the proof.

\begin{lem} \label{oldhalf}
Let $\kappa \in \frac{1}{2} \mf Z$ and ${\bf f} \in M_\kappa(\Gamma_1(N'))$. If $a_{{\bf f }}(n)=0$ for all $n$ co-prime to an odd prime $p\nmid N'$; then ${\bf f}=0$.
\end{lem}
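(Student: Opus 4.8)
The plan is to recast the hypothesis as an extra symmetry of $\mathbf f$ and then show that symmetry is incompatible with $\mathbf f$ being a nonzero holomorphic automorphic form. Since $\Gamma_1(N')$ contains $\left(\begin{smallmatrix}1&1\\0&1\end{smallmatrix}\right)$ (with trivial multiplier, as $4\mid N'$ when $\kappa\notin\mf Z$), $\mathbf f$ has an integral‑power Fourier expansion, so the assumption ``$a_{\mathbf f}(n)=0$ whenever $(n,p)=1$'' is equivalent to the single functional equation $\mathbf f(\tau+\tfrac1p)=\mathbf f(\tau)$. Writing $A:=\left(\begin{smallmatrix}1&1/p\\0&1\end{smallmatrix}\right)$, this says $\mathbf f$ is automorphic of weight $\kappa$ (for the same multiplier system, extended by the trivial value on $A$) under the group $\Gamma^{\dagger}:=\langle\,\Gamma_1(N'),\,A\,\rangle\subset\mrm{SL}(2,\mf Q)$. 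Taking absolute values (which kills the multiplier and all cocycle bookkeeping), the continuous function $\Phi(\tau):=|\mathbf f(\tau)|^{2}(\mrm{Im}\,\tau)^{\kappa}$ is then $\Gamma^{\dagger}$-invariant.

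The heart of the matter is to show that $\Gamma^{\dagger}$ is \emph{not} discrete in $\mrm{SL}(2,\mf R)$; this is exactly where $p\nmid N'$ is used. I would argue $p$-adically: $\Gamma^{\dagger}\subseteq\mrm{SL}(2,\mf Z[1/p])$, which is a lattice in $\mrm{SL}(2,\mf R)\times\mrm{SL}(2,\mf Q_p)$. If $\Gamma^{\dagger}$ were discrete in $\mrm{SL}(2,\mf R)$, then, containing the lattice $\Gamma_1(N')$, it would contain it with finite index; hence $\Gamma^{\dagger}$ would lie in $\tfrac1D M_2(\mf Z)$ for some $D$ and so have relatively compact image in $\mrm{SL}(2,\mf Q_p)$. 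But $\Gamma^{\dagger}$ is in fact \emph{dense} in $\mrm{SL}(2,\mf Q_p)$: it contains $\Gamma(N')$, which by strong approximation (here $p\nmid N'$ is essential) is dense in $\mrm{SL}(2,\mf Z_p)$, and $\langle\,\mrm{SL}(2,\mf Z_p),A\,\rangle=\mrm{SL}(2,\mf Q_p)$ because $A\left(\begin{smallmatrix}1&0\\-p&1\end{smallmatrix}\right)A\left(\begin{smallmatrix}0&-1\\1&0\end{smallmatrix}\right)=\mrm{diag}(1/p,p)$ and $\mrm{SL}(2,\mf Q_p)$ is generated by $\mrm{SL}(2,\mf Z_p)$ together with $\mrm{diag}(1/p,p)$ (Cartan decomposition). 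This contradiction shows $\Gamma^{\dagger}$ is non-discrete; since it also contains $\Gamma_1(N')$ and is therefore Zariski-dense in $\mrm{SL}(2)$, its closure in $\mrm{SL}(2,\mf R)$ must be all of $\mrm{SL}(2,\mf R)$, so every $\Gamma^{\dagger}$-orbit in $\mf H$ is dense.

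To conclude: $\Phi$ is a continuous $\Gamma^{\dagger}$-invariant function, hence constant on a dense orbit, hence constant on $\mf H$, say $\Phi\equiv C\ge 0$. If $C=0$ then $\mathbf f\equiv 0$, as desired. If $C>0$ then $\mathbf f$ is zero-free and $\log|\mathbf f(\tau)|=\tfrac12\log C-\tfrac{\kappa}{2}\log(\mrm{Im}\,\tau)$ on $\mf H$; the left side is harmonic, whereas $\log(\mrm{Im}\,\tau)$ is not (its Laplacian is $-(\mrm{Im}\,\tau)^{-2}$), forcing $\kappa=0$ — a case excluded, since then $M_\kappa(\Gamma_1(N'))=\mbb C$ and the statement is meant for $\kappa>0$ (indeed $\kappa\ge 2$ in all applications). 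Hence $\mathbf f=0$.

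The only genuinely non-formal step is the non-discreteness of $\Gamma^{\dagger}$, i.e.\ the fact that adjoining a single $p$-power translation to $\Gamma_1(N')$ breaks discreteness precisely when $p\nmid N'$; the $S$-arithmetic argument above handles all $N'$ uniformly, but for small $N'$ one can instead exhibit an accumulating sequence explicitly (e.g.\ $A\left(\begin{smallmatrix}1&0\\-N'&1\end{smallmatrix}\right)A$ is elliptic of trace $2-2N'/p\in(-2,2)\setminus\{0,\pm1\}$, hence of infinite order by Niven's theorem, when $N'<2p$, or one may invoke J\o rgensen's inequality). A wholly elementary alternative that avoids $p$-adic groups is to propagate the relation $\mathbf f(\tau+\tfrac1p)=\mathbf f(\tau)$ through the Atkin--Lehner/Fricke conjugates of $\mathbf f$: this forces the $q$-expansion of $\mathbf f$ at \emph{every} one of the finitely many cusps of $X_1(N')$ to be supported on multiples of $p$, whence $\mathbf f$ vanishes to infinite order at some cusp and therefore vanishes identically.
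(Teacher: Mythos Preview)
Your main argument is correct and takes a genuinely different route from the paper's. The paper reduces to integral weight by forming $\mathbf g(\tau)=\mathbf f(\tau)\mathbf f(p\tau)\in M_{2\kappa}(\Gamma_1(N'p))$, observes that $a_{\mathbf g}(n)=0$ for $(n,p)=1$, and then invokes two classical facts from Lang's book: first, that such a $\mathbf g$ equals $\mathbf h(p\tau)$ for some $\mathbf h\in M_{2\kappa}(\Gamma_1(N'))$; second, a vanishing lemma saying that no nonzero form on $\Gamma_1(N')$ can have its $q$-expansion supported on $p$-th powers when $p\nmid N'$. Comparing orders at $\infty$ in the identity $\mathbf f(\tau/p)^2\mathbf f(\tau)^2=\mathbf h(\tau)^2$ then yields the contradiction.

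Your approach is more structural: rephrase the hypothesis as $\mathbf f(\tau+\tfrac1p)=\mathbf f(\tau)$, pass to the invariant function $\Phi=|\mathbf f|^2(\mathrm{Im}\,\tau)^\kappa$ (this is where all half-integral bookkeeping disappears), and show that the group $\Gamma^{\dagger}=\langle\Gamma_1(N'),A\rangle$ is dense in $\mathrm{SL}(2,\mathbf R)$ via an $S$-arithmetic argument using strong approximation at $p$ and the Cartan decomposition of $\mathrm{SL}(2,\mathbf Q_p)$. Your bounded-denominator step (``$\Gamma^{\dagger}\subset\tfrac1D M_2(\mathbf Z)$'') is fine: write $\Gamma^{\dagger}=\bigcup_i\Gamma_1(N')g_i$ with finitely many coset representatives and take $D$ to clear their denominators. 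The harmonic-function endgame cleanly handles all $\kappa>0$, including the case $a_{\mathbf f}(0)\neq 0$ (where the paper's ``$\mathrm{ord}_\infty(\mathbf f)=pm$ with $m\ge1$'' would need a separate word). What you gain is uniformity and conceptual clarity across integral and half-integral weights; what the paper's argument gains is that it stays entirely within elementary $q$-expansion manipulations and two citable facts, without invoking $p$-adic groups or Borel density.

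One small remark: the ``wholly elementary alternative'' you sketch at the end is not complete as written. Having the expansion at \emph{every} cusp supported on multiples of $p$ does not by itself force infinite-order vanishing at a cusp; one still needs a descent step (essentially the paper's second Lang citation) to rule this out when $p\nmid N'$. Your main line, however, stands on its own.
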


\begin{proof}
The trick is to reduce to integral weights. Suppose that ${\bf f} \neq 0$ and put ${\bf g}(\tau):= {\bf f}^2(\tau) $. Then from the formula
\[  a_{{\bf g}}(n) = \sum_{r+s=n} a_{{\bf f }}(r)a_{{\bf f }}(s) \]
we see that ${\bf g} \in M_{2\kappa}(\Gamma_1(N'))$ is such that $a_{{\bf g}}(n) =0$ for all $n$ with $(n,p)=1$. Indeed, each summand is zero unless both $r$ and $s$ are divisible by $p$. This means that the Fourier expansion of ${\bf g}$ is supported on multiples of $p$, and hence we can write $g(q) = \Psi(q^p)$ for some power series $\Psi$, where $q=e(\tau)$.

However this forces ${\bf g}$, and hence ${\bf f}$ to be zero upon invoking \cite[Chapter~VIII, Thm.~4.1]{Lang} which states that:

\noindent \textbullet {\it \, Let $L \in \mf N$ and $p$ be a prime such that $p \nmid L$. If $f_\infty(q)$ is a power series such that $f_\infty(q^p)$ belongs to $M_{j}(\Gamma_1(L))$ for some integer $j \ge 1$, then $f_\infty=0$;} 

by taking $f_\infty(q)= \Psi(q)$, $j = 2 \kappa$ and $L=N'$. 
This finishes the proof. \qedhere
\end{proof}

\subsection{{\bf Method~1:} \texorpdfstring{Proof for all cuspidal $f$ without using multiplicity-one for $\kappa \in \frac{1}{2} \mf Z$}{} } \label{ell-cusp}

In this section we want to prove that $f$ has infinitely many non-zero, odd and square-free Fourier coefficients, possibly in a quantitative fashion.
However, let us note that we can not just quote the corresponding results from (cf. \cite[Theorem~2]{AD} or \cite[Theorem~2]{saha}) since the results therein are only for cusp forms on $\Gamma_0(N)$ with nebentypus, whereas our setting is on $\Gamma_1(N)$, and the problem of finding non-zero square-free Fourier coefficients does not behave in a desirable way under the decomposition by characters.

We now pursue the cases of integral and half-integral weights in separate subsections, by closely following \cite{AD, saha}. We would henceforth work with the modular form $g$ from \eqref{gdef}. 

\subsubsection{\texorpdfstring{$f$ cuspidal and $\kappa$ integral}{}} \label{ell-icusp}

Let us put $D= d^2 {\bf M}^2$ and for a square-free $r$ such that $(r,D)=1$, define
\[ G:= U(r^2)g = \sum_{n \geq 1} a_g(r^2n)e(n \tau),\]  
so that $g \in S_\kappa(\Gamma_1(D))$, $G \in S_\kappa(\Gamma_1(Dr^2))$ (actually the level would be $Dr$ but we won't need this).

We apply the Rankin-Selberg method to $g$. For any ${\bf g} \in S_{\bf \kappa}(\Gamma_1(\bf D))$ (${\bf D} \ge 1$) with Fourier expansion $\mf g = \sum_{n \geq 1} a'_{\mf g}(n) n^{\frac{\kappa-1}{2}} e(n \tau)$; by applying this method get (see \cite[p.~357, Theorem~1]{Ran1}, also \cite[eq.~(1.14)]{Selb} and note that $a_{\mf g}(n)=a'_{\mf g}(n) n^{\frac{\kappa-1}{2}}$)
\begin{equation} \label{rs-f}
\underset{n\leq X}{\sum}|a'_{\mf g}(n)|^{2}=A_{\mf g}X+O_{\mf g}(X^{\frac{3}{5}}),
\end{equation}
where the constant $A_{\bf g}$ is given by
\begin{equation} \label{ag}
A_{\bf g} := \frac{3}{\pi}\frac{(4\pi)^{\bf \kappa}}{\Gamma(\bf \kappa)}\big[ \sltwo \colon \Gamma_1(\bf D)\big]^{-1}\left\langle {\bf g},{\bf g}\right\rangle_{\bf D}. 
\end{equation}
Here, and henceforth $\left\langle {\bf g},{\bf g} \right\rangle_{\bf D}$ denotes the Petersson norm of ${\bf g}$ with respect to $\Gamma_1(\bf D)$ defined by 
\[ 
\left\langle {\bf g},{\bf g} \right\rangle_{\bf D} := \int_{\Gamma_1(\bf D) \backslash \mf H } |\mf g(\tau)|^2 v^{k} dudv/v^2,  \qq (\tau=u+iv).
\]

It is possible to rework all the calculations done for this in \cite{AD} on the spaces $S_\kappa(N,\chi)$ in our situation, but since a major portion of the work requires only upper bounds on the sum of square of Fourier coefficients, we may reduce to \cite{AD} via decomposition by characters.

For $Y>0$ and a modular form $\mf g$, let us put $S_{\mf g}(Y) := \sum_{n \le Y} |a'_{\mf g}(n)|^2 $. We now prove some results which give upper and lower bounds for the quantity $S_{\mf g}(Y)$. In particular they provide suitable bounds on $S_{U(r^2)\mf g}((Y)$ which are uniform in $r$.

\begin{prop} \label{Mindep}
For $f \in S_\kappa(\Gamma_1(d^2))$ as above, the following statements hold for some positive constants $c_{f, \mf M}$ depending on $f$ and $\mf M$, and $B_f,C_f$ depending only on $f$.

(i) For all $Y \ge c_{f, \mf M}$ one has $S_g(Y)\ge   B_f  \prod_{p \mid \mf M} \left( 1 -  \frac{2}{p}   \right)^2 
 \cdot Y$,

(ii) For all $Y>0$ one has $S_g(Y)\leq C_{f} \cdot Y$.
\end{prop}

\begin{proof}
For the proof of $(i)$, let us write $\mf M_0:= \mf M/d$.
Since the Fourier expansion of $f$ is supported away from $d$, we see first of all
\begin{align}
S_g(Y)=\sum_{n \le Y, (n, \mf M)=1} |a'_{ f}(n)|^2  = \sum_{n \le Y, (n, \mf M_0)=1} |a'_{ f}(n)|^2 . \label{sglbd}
\end{align}
$\mf M_0$ is odd and square-free since both $\mf M$ and $d$ are so. Next, we rewrite \eqref{sglbd} as
\begin{align*}
S_g(Y) = \sum_{\beta \mid \mf M_0} \mu(\beta) \sum_{n \le Y/\beta}|a'_{ f}(n \beta)|^2  = \sum_{\beta \mid \mf M_0} \mu(\beta) S_{U(\beta)f}(Y/\beta).
\end{align*}
Therefore \eqref{rs-f} applied to $U(\beta)f \in S_\kappa(\Gamma_1(d^2 \beta))$ (with $\beta$ as above) gives
\begin{align} \label{sgasy1}
S_g(Y) &= \sum_{\beta \mid \mf M_0} \frac{\mu(\beta)}{\beta} A_{U(\beta)f} Y + O_{f,\mf M}(Y^{3/5}) .
\end{align}

For $\mf D \ge 1$, let us put $\nu_{\mf D}:= \big[ \sltwo \colon \Gamma_1(\mf D )\big]^{-1} $. Consider the orthogonal basis $\{ f_j\}_j$ away from $d^2$ that we are considering in this section.
Let us further recall from \cite[Theorem~8]{AD} and the argument in \cite[Proof of Cor.~5.4]{AD} that for $\beta$ and $f_j$ as above,
\begin{align} \label{ubreln}
\left\langle  U(\beta)f_j  , U(\beta)f_j \right\rangle_{ d^2 \beta} =  Q_\beta \left\langle f_j, f_j \right \rangle_{d^2 \beta} ,
\end{align} \label{qb}
where $Q_\beta$ is a multiplicative function given by 
\begin{equation}
Q_\beta(f_j) = \prod_{p \mid \beta} Q_p(f_j) ; \q Q_p(f_j) = \left( p^{k-2}+\frac{(p-1) |\lambda_{f_j}(p)|^2 } {p+1}  \right).
\end{equation}
Here $\lambda_{f_j}(p)$ is the eigenvalue for $f_j$ under $T_p$.
Now from \cite[Cor.~5.2]{AD} and \eqref{ubreln} we find that (with $a_{\kappa} =\frac{3}{\pi}   \frac{(4\pi)^{\bf \kappa}}{\Gamma(\bf \kappa)}$)
\begin{align*} 
A_{U(\beta)f_j} &= a_{\kappa}   \nu_{d^2 \beta} \beta^{1-\kappa} Q_\beta(f_j) \left\langle f_j, f_j \right \rangle_{ d^2 \beta} = a_{\kappa}   \nu_{d^2}  \beta^{1-\kappa} Q_\beta(f_j) \left\langle f_j, f_j \right \rangle_{ d^2 } = \beta^{1-\kappa} Q_\beta(f_j) \cdot A_{f_j}.
\end{align*}

Now we write $f = \sum_j c_j f_j$ and note that by the orthogonality of the $f_j$'s
\begin{align*}
\left\langle f, f \right \rangle = \sum_j |c_j|^2 \left\langle f_j, f_j \right \rangle ;  \q A_f = \sum_j |c_j|^2 A_{f_j} 
\end{align*}
and by the orthogonality of the $U_\beta(f_j)$'s (invoking \cite[Theorem~8]{AD}) that
\begin{align*}
\left\langle  U(\beta)f  , U(\beta)f \right\rangle_{ d^2 \beta} = \sum_j |c_j|^2 \left\langle  U(\beta)f_j  , U(\beta)f_j \right\rangle_{ d^2 \beta} .
\end{align*}
Therefore we get
\begin{align} \label{aureln}
A_{U(\beta)f}  = \sum_j |c_j|^2 \beta^{1-\kappa} Q_\beta(f_j) \cdot A_{f_j}.
\end{align} 

Putting all these together we now derive from \eqref{sgasy1} that
\begin{align} \label{sgpre}
S_g(Y) &= \sum_j |c_j|^2 A_{f_j} \sum_{\beta \mid \mf M_0} \frac{\mu(\beta) Q_\beta(f_j) }{\beta^\kappa}  \cdot  Y + O_{f,\mf M}(Y^{3/5}).
\end{align}
Let $Q_\beta$ (respectively $\lambda(p)$) denote any of the quantities $Q_\beta(f_j)$ (respectively $\lambda_{f_j}(p)$). 
We put $\mc Q(\mf M_0) := \sum_{\beta \mid \mf M_0} \frac{\mu(\beta) Q_\beta}{\beta^\kappa} $. By multiplicativity we can write
\begin{align*}
\mc Q(\mf M_0) = \prod_{p \mid \mf M_0} \left( 1 - \frac{Q_p}{p^\kappa} \right) 
= \prod_{p \mid \mf M_0} \left( 1 -  
  \frac{1}{p^2}  - \frac{(p-1) |\lambda(p)|^2 } {p^\kappa (p+1)}  \right) . 
\end{align*}
Using the Deligne's bound for $\lambda(p)$ we see that
\begin{align}
\mc Q(\mf M_0) &\ge \prod_{p \mid \mf M_0} \left( 1 -  \frac{4}{p} - \frac{1}{p^2} + \frac{8}{p(p+1)} \right) \n \\
& \ge \prod_{p \mid \mf M_0} \left( (1 -  \frac{2}{p})^2 +  \frac{8}{p(p+1)} - \frac{5}{p^2}  \right) \ge  \prod_{p \mid \mf M_0} \left( 1 -  \frac{2}{p}   \right)^2  \ge a_d \prod_{p \mid \mf M} \left( 1 -  \frac{2}{p}   \right)^2, \label{qm0}
\end{align}
for some constant $a_d$ depending only on $d$.

We now use the lower bound \eqref{qm0} in \eqref{sgpre} to obtain
\begin{align}
S_g(Y) &\ge  a_d\prod_{p \mid \mf M} \left( 1 -  \frac{2}{p}   \right)^2 \sum_j |c_j|^2 A_{f_j}  \cdot Y + O_{f,\mf M}(Y^{3/5}) \n \\
&\ge  A_fa_d \prod_{p \mid \mf M} \left( 1 -  \frac{2}{p}   \right)^2 
\cdot Y + O_{f, \mf M}(Y^{3/5}). \label{sgfinal}
\end{align}
The proof of $(i)$ is therefore complete from \eqref{sgfinal} taking for instance $B_f:= A_f a_d/2$.

For the proof of $(ii)$ note that for all $Y>0$,
\begin{align}
S_g(Y) = \sum_{n \le Y, (n, {\mf M})=1} |a'_{ f}(n)|^2 \le \sum_{n \le Y} |a'_{ f}(n)|^2  = S_f(Y) \le C_f \cdot Y,
\end{align}
for some constant $C_f$ depending only on $f$, by looking at \eqref{rs-f}.
\end{proof}

The space $S_\kappa(\Gamma_1(d^2))$ has an orthogonal basis consisting of eigenforms for all $T_n, \sigma_n$ with $(n,d)=1$. Here $\sigma_n$ are the diamond operators. This set is just the union of eigenforms away from the level $d^2$ in the spaces $S_\kappa(d^2,\psi)$ with $\psi$ varying $\mod{d^2}$. Let this basis be denoted by $\{f_1,f_2,\ldots, f_J\}$ where $J=\dim S_\kappa(\Gamma_1(d^2))$. 

\begin{lem} \label{flem}
For any square-free integer $r$ such that $(r, \mf M)=1$ and for all $Y>0$, one has $S_{U(r^2)f}(Y)\leq D_{f} \cdot 11^{\omega(r)}\cdot   Y$.
\end{lem}

\begin{proof}
We look at the orthogonal decomposition $f=\sum_j c_j f_j$, where the set $\{ f_1,\ldots, f_J \}$ is an orthogonal basis for $S_\kappa(\Gamma_1(d'^2))$.
We first prove the lemma for the eigenforms $f_j$. Let us put
$a'_j(n) := a'_{f_j}(n)$. Let $r=p_1p_2\cdots p_t$. We proceed by induction on $t$. Note that by our choice: cf. the paragraph preceeding \lemref{gn0}; $d \mid \mf M$. Thus $(r,\mf M) =1$ implies that $(r,d)=1$.

From \cite[Proof of Prop.~5.3]{AD} we recall that for a prime $p$ such that $(p, \mf M)=1$,
\begin{align}
a'_j(p^2n) = a'_j(n)a'_j(p^2) - \chi(p) a'_j(n) \delta_{p \mid n} -\chi(p) a'_j(n/p^2), 
\end{align}
where $\delta_{p \mid n} = 1$ if $p \mid n$ and $0$ otherwise and $a'_j(s)$ is zero if $s$ is not an integer. Here we remind the reader that the $|_k$ operator in \cite{AD} is normalised so that it defines a group action on $GL(2, \mf R)^+$, the subgroup of $GL(2, \mf R)$ whose elements have positive determinant. Since $f_j$ is an eigenform away from $d'$, $a'_j(p^2)$ equals the (normalised) eigenvalue of some newform of level dividing $D$ under the Hecke operator $T_{p^2}$ defined as in \cite{AD}. 

By the Cauchy-Schwartz inequality and applying the Deligne's bound to $a'_j(p^2)$, we get
\begin{align} \label{p1}
|a'_j(p^2n)|^2 \le 9 |a'_j(n)|^2 +  |a'_j(n)|^2 \delta_{p \mid n} + |a'_j(n/p^2)|^2 
 \le 10  |a'_j(n)|^2 + |a'_j(n/p^2)|^2 .
\end{align}
Summing \eqref{p1} over $n \le Y$ and using the bound in \propref{Mindep}~$(ii)$ applied to $g_j$, we get 
\[ S_{U(p^2)f_j}(Y) \le 11 \cdot S_{f_j}(Y) \le  11 \cdot C_j \cdot Y.\]
This proves \lemref{flem} for the eigenform $f_j$ when $t=1$ with a constant $C_{j}$ depending only on $\kappa$ and $d$. 

Let us put for $i=1,\ldots, t$, $V_i := U(p_1^2\cdots p_{i-1}^2)f_j$ and set $V_0 :=f_j$. By induction hypothesis, suppose we know the result in the statement of the lemma for $V_{i-1}$. Since $(p_{i},D)=1$, the Hecke operator $T_{p_i^2}$ commutes with $U(p_1^2\cdots p_{i-1}^2)$. Therefore arguing as in the case when $t=1$, replacing $g_j$ by $V_i$ we get
\begin{align} \label{vi}
S_{V_i}(Y) \le 11 S_{V_{i-1}} (Y) \le C_j \cdot 11^{i} \cdot  Y
\end{align}
with $C_j$ as above. This proves our result for the elements $\{f_1,\ldots f_J \}$ of the orthogonal basis. 

Now from the equation $f = \sum_j c_j f_j$ again, we see by the Cauchy-Schwartz inequality that
\begin{align}
S_{U(r^2) f}(Y) \le (\sum_j |c_j|^2) \cdot \sum_j S_{U(r^2 )f_j}(Y) \le D_{f} \cdot 11^{\omega(r)} \cdot  Y
\end{align}
using \eqref{vi} for $i=t$. Here $D_f= (\sum_j |c_j|^2)\cdot  (\sum_j C_j)$. This completes the proof of the lemma.
\end{proof}

Let $\mc S$ be the set of square-free integers, and $\mc S_{\bf M} = \{ n \in \mc S | (n, \bf M)=1 \}$. Define
\begin{equation} \label{Sgdef}
S_f({\bf M},X):= \sum_{n \le X, n \in \mc S_{\bf M} } |a'_f(n)|^2   \q (=\sum_{n \le X, n \in \mc S } |a'_g(n)|^2  )
\end{equation}
keeping in mind that $a'_g(n)$ is $0$ if $(n, {\bf M})>1$, and equal to $a'_f(n)$ otherwise, see \eqref{gdef}. Hence \footnote{We take the opportunity to correct an error in the proof of \cite[Prop.~5.8]{AD}. The calculations for the lower bound of the quantity $S_f(M,X)$ there are not correct, and should be replaced by those given below, along with the results supporting it, viz. \propref{Mindep} and \lemref{flem} of this paper. Rest of the results continue to hold in \cite{AD}. Note also that \propref{Mindep} and \lemref{flem} in this paper hold for any $\mf g$ whose Fourier expansion is supported away from its level.}
\begin{align}
S_f({\bf M},X)
= \sum_{n \le X} \sum_{r^2 \mid n} \mu(r) |a'_g(n)|^2
&= \sum_{r^2 \le X, r \in \mc S_{\bf M}} \mu(r) \sum_{m \le X/r^2} |a'_g(mr^2)|^2 \n \\
& =\sum_{r^2 \le X, r \in \mc S_{\bf M}} \mu(r) S_{U(r^2)g}(X/r^2).
\end{align}

Clearly $S_{U(r^2)g}(X/r^2) \le S_{U(r^2)f}(X/r^2)$.
Therefore for $X$ large enough (i.e. $X \ge c_{f, \mf M}$, where $c_{f, \mf M}$ is as in \propref{Mindep}), we can use $(i)$ and $(ii)$ in \propref{Mindep} and \lemref{flem} to write
\begin{align} 
S_f({\bf M},X) &\ge B_f   \prod_{p \mid \mf M} \left( 1 -  \frac{2}{p}   \right)^2 \cdot X- \sum_{r^2 \le X, r \in \mc S_{\bf M}}  S_{U(r^2)f}(X/r^2) \n \\
&\ge \left( B_f  \prod_{p \mid \mf M} \left( 1 -  \frac{2}{p}   \right)^2  - D_f  \cdot \sum_{r^2 \le X, r \in \mc S_{\bf M}}  \cdot 11^{\omega(r)} r^{-2} \right) \cdot X 
\label{lbd0}
\end{align}

Now let us choose ${\bf M}=\prod_{2<p \leq t} p$ with $t$ large enough such that all the primes in $d$ occur in ${\bf M}$ (cf. \eqref{ddef} and the few lines after \eqref{gdef}, and recall that $d$ is odd).
\begin{equation} \label{decay}
 \sum_{r \ge 2, r \in \mc S_{\mf M} } 11^{\omega(r)}/r^2 = -1 + \prod_{p>t}(1+11/p^2).
\end{equation} 
The latter sum is bounded as $O(1/t)$. Now we look at the quantity 
$\prod_{p \mid \mf M} \left( 1 -  \frac{2}{p}   \right)^2 $. We see that
\begin{align} \label{prodest}
\prod_{p \mid \mf M} \left( 1 -  \frac{2}{p}   \right)^{-2}  = \exp \left( 2 \sum_{2<p \le t}  \log(1- \frac{2}{p}) \right) =  (\log t)^4 +O(1) .
\end{align}
We therefore can choose $t$ large enough so that the expression inside the braces in \eqref{lbd0} is positive.  
Hence for $X \ge c_{f, \mf M}$ (which will depend only on $f$ since we can choose $t$ large depending only on $f$) that
\begin{equation} \label{lbd1}
S_f({\bf M},X) \ge E_f X
\end{equation}
where $E_f>0$ depends only on $f$.
Thus we are done in the integral weight case.

\subsubsection{$f$ cuspidal, $\kappa \in \frac{1}{2} \mf Z \setminus \mf Z$} \label{ell-hcusp}
Here as well, we try to reduce to the calculations in \cite{saha}. In the remainder of this subsection, let us put for convenience
\begin{displaymath}
\begin{aligned}
L:=d'^2, \q L_f := &\text{ the square-free integer divisible by all primes factors of } L \\ &  \, \text{ appearing in \cite[Thm.~3]{saha} (=$\mf M$, cf. the line preceeding \lemref{gn0}.)}
\end{aligned}
\end{displaymath}  

We start with
\begin{displaymath}
g(\tau) = \underset{n\leq X, \, (n,L_f)=1}{\sum} a'_f(n)q^n \in S_\kappa(\Gamma_1(LL_f^2)),
\end{displaymath}
here we do not worry about the precise level of $g$. Now $g $ is non-zero by \lemref{gn0}. 
Let us decompose $g$ by characters $\bmod{LL_f^2}$, and write
\begin{align*}
g = \sum_\chi c_\chi g_\chi,
\end{align*}
where $c_\chi \in \mf C$ and $g_\chi \in S_\kappa(LL_f^2,\chi)$. Note that the constants $c_\chi$ depend only on $g$ (or equivalently only on $f$). 
Let $M$ be a square-free integer (to be specified later) divisible by $L_f$.
Let us put $M_0=M/L_f$, so that $M_0$ is odd and square-free and $(M_0, L_f)=1$. We then consider the quantity
\begin{align*}
T_f(Y, M) &:= \underset{n \ge 1, \, (n,M)=1}{\sum}|a'(f,n)|^{2}e^{-n/Y} 
\end{align*}
which also equals $T_g(Y,M_0)$ whose definition is obvious. Then
\begin{align}
T_f(Y, M)=T_g(Y,M_0) &= \sum_{s \mid M_0} \mu(s) \sum_{n \ge 1} |a'(g,sn)|^{2}e^{-sn/Y} \n \\
&= \sum_{\chi,\chi'} c_\chi \overline{c_{\chi'} }\sum_{s \mid M_0} \mu(s) \sum_{n \ge 1} a'(g_\chi,sn) \overline{a'(g_{\chi'},sn) } e^{-sn/Y}. \label{chi1}
\end{align}
If we denote the innermost sum in \eqref{chi1} by $T_s(\chi,\chi';Y)$, then from the results of \cite{DI} one infers that for any square-free $s$ such that $(s,L_f)=1$ (in particular for $s \mid M_0$), and for some constant $\mc C_f>0$ (proportional to $\langle f, f \rangle$) depending only on $f$
\begin{align}
T_s(\chi,\chi;Y) = \mc C_f \frac{Y}{s} + O_{f,M}(Y^{1/2}) \label{chichi}\\ 
T_s(\chi,\chi' ;Y) = O_{f,M}( Y^{1/2}) \q \q (\chi \neq \chi')\label{chichi'}.
\end{align}
In \eqref{chichi'}, we have used the fact that $g_\chi$ and $g_{\chi'}$ are orthogonal to each other if $\chi \neq \chi'$. Indeed \eqref{chichi'} follows from \cite[Theorem~5]{DI} noting that the first term on the r.h.s. of \cite[eqn~(48)]{DI} is proportional to the inner product of the cusp forms in question.
Therefore 
\begin{align}
T_g(Y,M_0) &= \sum_{\chi} |c_\chi |^2 \sum_{s \mid M_0}  \frac{\mu(s)}{s} \cdot \mc C_f Y + O_{f,M}( Y^{1/2}) \n \\
& \ge \mc D_f \frac{\phi(M)}{M} \cdot Y, \label{pass}
\end{align}
for all $Y>d_{f,M}$ for some constant depending only on $f,M$.

After this, we follow the argument in the previous subsection, i.e., choose an orthogonal basis of $S_\kappa(\Gamma_1(L))$ and write $f = \sum_{j} f_j$, where the set $\{f_j\}_j$ consists of a basis of pairwise orthogonal eigenforms on the spaces $S_\kappa(L,\chi)$ ($\chi \bmod L$) which are away from $L$. We obtain analogous to \cite[Lem.~3.8]{saha} that for any square-free $r$ (including $1$) co-prime with $L$
\begin{equation} \label{bf}
\sum_{n \ge 1} |a'_{U(r^2)f}(n)|^{2} e^{-n/X} \le 19^{\omega(r)} B_f  X,
\end{equation}
where $B_f$ depends only on $f$.

Let us now finish the proof. Recall that $\mc S_M$ is the set of square-free integers coprime to $M$. We write using \eqref{pass} and \eqref{bf} that

\begin{align} \label{halflbd}
\underset{n\ge 1, \, n \in \mc S_{M }}{\sum}|a'_f(n)|^{2} e^{-n/X}\ge \frac{\phi(M)}{M} D_f \cdot X - B_f  X \cdot \sum_{r \ge 2, (r,M)=1} |\mu(r)| 19^{\omega(r)} r^{-2}.
\end{align}
The right hand side is shown to be $\gg X$ in \cite[p.~377]{saha} by choosing $M$ appropriately (depending only on $f$, see also subsection~\ref{ell-icusp}), and thus we are done.

\subsubsection{Quanitative bounds} \label{quanta}
To obtain quantitative versions of the non-vanishing results, we use Deligne's bound ($ a'_f(n) \ll_f n^{\epsilon} $, $n \ge 1$ square-free) when $\kappa \in \mf Z$ and Bykovski\u{i}'s bound ($a'_f(n) \ll_f n^{ 3/16 +\epsilon} $, $n \ge 1$) \cite{Byko} otherwise. More precisely, when the weight $\kappa$ is integral, referring to the sum in \eqref{Sgdef} and looking at its lower bound from \eqref{lbd1}, we see (with a given $\epsilon>0$) that
\begin{equation} \label{ilbd}
 \# \{ n \le X, n \in \mc S_{\bf M} | a'_f(n) \ne 0 \} \cdot X^{\epsilon}  \gg \sum_{n \le X, n \in \mc S_{\bf M} } |a'_f(n)|^2 \gg_f X,
\end{equation}
which gives the lower bound on $\# \{ n \le X | a'_f(n) \ne 0 \}$.
When $\kappa$ is half-integral, we start from the lower bound in \eqref{halflbd} writing it in the form
\begin{equation} \label{hlbd1}
\underset{n\le X^{1+\epsilon}, \, (n,M)=1}{\sum}|a'_f(n)|^{2} e^{-n/X} + \underset{n > X^{1+\epsilon}, \, (n,M)=1}{\sum}|a'_f(n)|^{2} e^{-n/X}\gg_f X.
\end{equation}
The second sum above can be estimated as follows. 
\begin{align} 
\underset{n > X^{1+\epsilon}, \, (n,M)=1}{\sum}|a'_f(n)|^{2} e^{-n/X} 
& \le \int_{X^{1+\epsilon}}^\infty  y^{3/8+\epsilon} e^{-y/X}dy \n \\
& \le X^{11/8+\epsilon} \Gamma(3/8+\epsilon, X^{\epsilon}) \label{hlbd2}
\end{align}
where $\Gamma(s,x)$ denotes the upper incomplete Gamma function defined for $\Re(s)>0$ as:
\[  \Gamma(s,x) = \int_x^\infty y^{s-1} e^{-y} dy. \]
From the standard asymptotic properties of $\Gamma(s,x) $ (see eg. \cite{abra}) for $s>0$ and $x \to \infty$ we know that $\Gamma(s,x) \sim x^{s-1}e^{-x}$. Therefore for $X$ large enough we see that the left hand side of \eqref{hlbd2} is
\begin{equation*} 
\ll_\epsilon X^{11/8+\epsilon} e^{-X^\epsilon } \ll_{\epsilon,A} X^{-A}
\end{equation*}
for any $A>0$. Therefore the second sum in \eqref{hlbd1} contributes to it negligibly.

Thus for large enough $X$ and the obvious inequality:
\[  \underset{n\le X^{1+\epsilon}, \, (n,M)=1}{\sum}|a'_f(n)|^{2} \geq \underset{n\le X^{1+\epsilon}, \, (n,M)=1}{\sum}|a'_f(n)|^{2} e^{-n/X} \gg_{f,\epsilon} X, \]
we get using Bykovski\u{i}'s bound mentioned earlier in this section that
\begin{equation*} 
 \# \{ n \le X^{1+\epsilon}, n \in \mc S_{ M} | a'_f(n) \ne 0 \} \cdot X^{3/8+\epsilon}  \gg \sum_{n \le X^{1+\epsilon}, (n,  M)=1 } |a'_f(n)|^2 \gg_f X.
\end{equation*}
Finally by replacing $X^{1+\epsilon}$ by $X$ and again changing $\epsilon$ if necessary we obtain
\begin{equation}\label{hlbd3}
\# \{ n \le X, n \in \mc S_{ M} | a'_f(n) \ne 0 \} \gg_{f, \epsilon} X^{5/8 - \epsilon}. 
\end{equation}
We summarize the above in a theorem.

\begin{thm} \label{partb-c}
Let $\kappa \geq 2, N \ge 1$ and $ f \in S_{\kappa}(\Gamma_1(N'))$ ($N'$ as defined in \eqref{n'}) be such that $a_{ f}(n)=0$ for all $n$ such that $(n,N')>1$. Then there exist infinitely many odd and square-free $n$ such that $a_{ f}(n) \neq 0$. More precisely, for any $\epsilon>0$,
\[ \# \{ n \le X, \, n \text{ square-free } |\,  a_{ f}(n) \neq 0 \} \gg_{f, \epsilon} \begin{cases} X^{1-\epsilon} &\text{ if } \kappa \in \mf Z  \\ X^{5/8 - \epsilon} &\text{ otherwise} \end{cases}. \]
\end{thm}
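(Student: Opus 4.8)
The plan is to assemble the ingredients from \S\ref{ell-cusp}--\S\ref{quanta} into a single statement, running the integral and half-integral weight cases in parallel. First I would replace $f$ by the auxiliary cusp form $g$ of \eqref{gdef}, obtained by retaining only the Fourier coefficients $a_f(n)$ with $(n,\mathbf{M})=1$, where $\mathbf{M}$ is a square-free integer (which we may and do take even) divisible by every prime factor of $N'$, enlarged in the half-integral case to the modulus $N_f$ of \cite[Thm.~3]{saha}. By \lemref{gn0} this $g$ is non-zero; it lies in $S_\kappa(\Gamma_1(N'\mathbf{M}^2))$ and its $q$-expansion is supported on $n$ coprime to $\mathbf{M}$, so that once we further restrict to square-free $n$, all such $n$ are automatically odd. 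Since $g$ and $f$ share these coefficients, it suffices to produce many square-free $n$ with $a_g(n)\neq 0$.

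Second, I would invoke the Rankin--Selberg asymptotic $\sum_{n\le X}|a'_g(n)|^2 = A_g X + O_g(X^{3/5})$ with $A_g>0$ (valid in integral weight by \cite{Ran1, Selb}, and in half-integral weight in the smoothed form $\sum_{n\ge 1}|a'_g(n)|^2 e^{-n/X}=A_g X+O_g(X^{3/5})$), together with the Petersson-norm comparison $A_{U(r^2)g}\le 19^{\omega(r)}A_g$ for square-free $r$ coprime to $\mathbf{M}$. This last inequality is the technical heart: one expands $g$ in an orthogonal basis of Hecke eigenforms away from the level, observes that the images under $U(r^2)$ remain pairwise orthogonal, and applies the norm estimates of \cite[Cor.~5.2, Prop.~5.6]{AD} in integral weight, respectively of \cite[Prop.~3.5]{saha} in half-integral weight. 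A M\"obius sieve over $r^2\mid n$ then gives, for $X$ large,
\[
\sum_{\substack{n\le 2X,\ (n,\mathbf{M})=1\\ n\ \text{square-free}}}|a'_f(n)|^2 \ \ge\ A_g X\Big(1-2\sum_{r\ge 2,\,(r,\mathbf{M})=1}19^{\omega(r)}r^{-2}\Big),
\]
and choosing $\mathbf{M}=\prod_{p\le T}p$ with $T$ large makes the tail $-1+\prod_{p>T}(1+19/p^2)=O(1/T)$ small, so that the left-hand side is $\gg_f X$.

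Third, I would convert this $L^2$ lower bound into a count of non-zero coefficients. In integral weight Deligne's bound $a'_f(n)\ll_f n^{\epsilon}$ on square-free $n$ yields at once $\#\{n\le X \text{ square-free} : a_f(n)\neq 0\}\gg_f X^{1-\epsilon}$. In half-integral weight one works from the smoothed bound, splits the sum at $n=X$, estimates the tail $\sum_{n>X}|a'_f(n)|^2 e^{-n/X}$ by $\int_X^\infty y^{3/8+\epsilon}e^{-y/X}\,dy\ll X^{-5/8+\epsilon}$ using Bykovski\u{i}'s bound $a'_f(n)\ll_f n^{3/16+\epsilon}$ \cite{Byko}, and then treats the surviving main-range sum with the same bound to obtain $\#\{n\le X \text{ square-free}: a_f(n)\neq 0\}\gg_f X^{5/8-\epsilon}$. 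Oddness is built in since $2\mid\mathbf{M}$.

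I expect the main obstacle to be the half-integral weight case, and within it the Petersson-norm comparison $A_{U(r^2)g}\le 19^{\omega(r)}A_g$: over $\Gamma_1$ of square level in half-integral weight the Hecke theory is not fully developed, which is precisely why the smoothed Rankin--Selberg formula and the machinery of \cite{saha} must be brought to bear, whereas the integral-weight analogue follows comfortably from \cite{AD}. A secondary subtlety, already dealt with in \lemref{gn0} and the lemma following it, is ensuring $g\neq 0$ in half-integral weight via reduction to integral weights.
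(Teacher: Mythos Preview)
Your proposal tracks the paper's argument essentially step for step: the auxiliary form $g$ of \eqref{gdef} and \lemref{gn0}, the Rankin--Selberg asymptotic together with the Petersson-norm comparison $A_{U(r^2)g}\le 19^{\omega(r)}A_g$ via the eigenform decomposition and \cite{AD,saha}, the M\"obius sieve with $\mathbf{M}=\prod_{p\le T}p$, and the passage to a count via Deligne's and Bykovski\u{i}'s bounds.

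One computational slip worth flagging (which the paper itself commits in \eqref{hlbd2}): the tail $\int_X^\infty y^{3/8+\epsilon}e^{-y/X}\,dy$ is of order $X^{11/8+\epsilon}$, not $X^{-5/8+\epsilon}$, so as written it does \emph{not} beat the main term $X$. The repair is immediate: since $n^{3/8+\epsilon}e^{-n/X}\ll X^{3/8+\epsilon}$ uniformly in $n$, each nonzero term of the smoothed square-free sum contributes at most $X^{3/8+\epsilon}$, while the total contribution from $n>CX$ is $O(X^{11/8+\epsilon}e^{-C/2})=o(X)$ for a fixed large $C$; hence the lower bound $\gg X$ already forces $\gg X^{5/8-\epsilon}$ square-free $n\le CX$ with $a_f(n)\neq 0$, and rescaling gives the stated count.
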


\subsection{{\bf Method~2:} \texorpdfstring{Proof for all $f \in \mN$ using multiplicity-one where $\kappa \in \mf Z$}{} } \label{ell-ncusp}

We would show that when $\kappa \in \mf Z$, one can prove an optimal result about the nonvanishing of odd, square-free Fourier coefficients of any non-zero $h \in \mN$ whose Fourier expansion is supported away from the level. One could then apply this result to $f$ from \eqref{away} with $N=d^2$. The crucial input would the multiplicity-one on the space of newforms. The method is based on an useful argument involving sieving of eigenforms, originally due to Balog-Ono \cite{bo}. Variants of this method have already appeared in \cite{AD} and \cite{BD3}. For convenience, we recall the details of the method below.

We start with an $h \in \mN$ such that $a_h(n)=0$ for all $n$ such that $(n,N)>1$. We consider, as before, $H(\tau)=h(N \tau) \in M_\kappa(\Gamma_1(D))$ where we put $D=N^2$. Note that $a_H(n)=0$ for all $(n,D)>1$.

Let us denote the set of normalised newforms on $\Gamma_1(D)$ of weight $k$ by $\mc F^1_D$ and those on $\Gamma_0(D)$ with nebentypus $\chi$ by $\mc F_{D,\chi}$. \textsl{In this section we allow $\kappa \ge 2$}.

Since $\mc F^1_D = \cup_\chi \mc F_{D,\chi}$ where $\chi$ runs over Dirichlet characters $\bmod D$; we can write
\begin{equation} \label{fexpan}
H(\tau)=  \sum_i \sum_\delta \alpha_{i,\delta} \, f_{i}(\delta \tau).
\end{equation}
Here $f_i$ runs over $\cup_\chi \mc F_{D,\chi}$, $\delta$ runs over the divisors of $D$ (such that $\delta \cdot \text{(level of } f_i) \mid D$); and $c_{j,\delta} $ are scalars. For this result, see \cite[Lem.~4.6.9]{miyake} for the space of cusp forms and \cite[Prop.~5]{weis} for the space of Eisenstein series.

Now by classical (new)old-form theory it follows that $H$ can not entirely lie in the old-space (see \cite[Theorem~4.6.8]{miyake}), and hence must have a non-zero `newform' component of some level. In terms of \eqref{fexpan}, this means that there exist an $i'$ such that $c_{i',1} \neq 0$. For ease of notation, by renumbering if necessary we can assume that $i'=1$ and omit the nebentypus $\chi'$ of the corresponding newform $f_1$ (of some level $M \mid D$) from display.

We would now pass on to the argument mentioned at the beginning of this subsection. This involves sieving out the entire old-classes corresponding to all $f_i$ with $i \neq 1$ by means of Hecke operators $ T_p$ with $p \nmid D$ (for $\Gamma_1(D)$) and using multiplicity-one. Let the Fourier expansion of $f_i$ be written as
\begin{equation*}
f_i(\tau) = \sum_{n \geq 1} b_i(n) q^n.
\end{equation*}

By multiplicity-one, we can choose an odd prime $q=q_{1,2} \nmid D$ 
such that $b_{1}(q) \neq b_{2}(q) $. In fact, one can bound such a $q$ solely in terms of $\kappa$ and $D$. See eg. \cite[Prop.~3.2]{BD3}.
Then consider the form $g_{1}(\tau)=\sum_{n=1}^{\infty}a_{1}(n)q^{n} :=T(q) f(\tau)-b_{2}(q)f(\tau)$ so that
\begin{align*}
g_{1}(\tau)=\sum_{i=1}^{s}(b_{i}(q)-b_{2}(q))\sum_{\delta|N} \alpha_{i,\delta}f_{i}(\delta \tau) \q (s=\dim M_\kappa(\Gamma_1(D)) \, ).
\end{align*}
The modular forms $f_{2}(\delta \tau)$ for any $\delta | N$, do not appear in the decomposition of $g_{1}(\tau)$ but $f_{1}$ does. Proceeding inductively in this way, we can remove all the non-zero newform components and their translates $f_{i}(\delta \tau)$ for all $i=2,...,s$ one by one, to obtain a modular form $F$ in $M_\kappa(\Gamma_1(D))$ such that on the one hand, we have
\begin{equation} \label{gs-1}
F(\tau)=\sum_{n=1}^{\infty}A(n)q^{n}:=\prod_{2 \le j \le s}  (b_{1}(q_{1,j})-b_{j}(q_{1,j}))    \sum_{\delta|N}\alpha_{1,\delta}f_{1}(\delta \tau).
\end{equation}
To be precise, we put $g_0:=H$, and having determined $g_{i-1}$, we put $g_i := T(q_{1,i})g_i - b_{q_{1,i}} g_i$, where $q_{1,i}$ is any prime coprime to $D$ such that $b_{1}(q_{1,i}) \neq b_{i}(q_{1,i}) $. At each stage we choose the smallest odd prime with these properties. As mentioned before, such primes are bounded solely in terms of $\kappa$ and $D$ (cf. \cite[Prop.~3.2]{BD3}). Then $F=g_{s-1}$.

By the construction, the product appearing on the right hand side of the equality in \eqref{gs-1} is non-zero. Therefore rescaling $F$, and calling the resulting function again as $F$, we on the other hand note that the inductive procedure gives us finitely many ($\le 3^{s-1}$) algebraic numbers $\beta_{t}$ (polynomials in the $p_{1,i}$'s and certain Dirichlet characters) and positive, square-free rational numbers $\gamma_{t}$ (which are quotients of the $p_{1,i}$'s) such that for every $n \ge 1$
\begin{equation} \label{f}
A(n) =\sum_{\delta \mid N} \alpha_{1,\delta} b_{1}(n/\delta) \equiv \sum_{t} \beta_{t}a_H(\gamma_{t}n) 
\end{equation}
In \eqref{f}, $\alpha_{1,\delta} \neq 0$. We put
$Q:=  \prod_{i=2}^s q_{1,i}$.
Note that $\max_{t} \gamma_t \le Q$.

Therefore if we choose $n \ge 1$ such that $(n,QN)=1$, we get
\begin{equation} \label{neweq}
\alpha_{1,1}b_1( n) = \sum_t \beta_{t}a_H(\gamma_{t}  n) .
\end{equation}
Then by our choice, all the $\gamma_t$ appearing in \eqref{neweq} are odd, square-free integers. This is because the primes $q_{1,i}$ are pairwise distinct. 

Let us define for $L \ge1$ and $\mf g \in M_\kappa(\Gamma_1(D))$ the counting functions:
\begin{align*}
\Pi_{\mf g}(L;X) &:=\{ n \le X \mid (n,L)=1, \, a_{\mf g}(n) \neq 0 \} \\
\Pi_{\mf g}^* (L;X) &:= \# \{ n \le X \mid (n,L)=1, n \text{ odd, square-free, } \, a_{\mf g}(n) \neq 0 \}.
\end{align*}

Now from \eqref{neweq} we see that for any $n \le X$, $(n,QN)=1$ such that $b_1(n) \neq 0$, there is a \textsl{unique} $t$ such that $\gamma_tn$ satisfies $a_H(\gamma_t n) \neq 0$. Clearly $\gamma_t n \le QX$.
This implies that
\begin{equation} \label{piineq}
\Pi^*_{f_1}(QN;X) \leq \Pi^*_H(QN, QX).
\end{equation}

The size of the set $\Pi_{\mf g}(L;X)$ has been studied in \cite{serre1, serre2} when $\mf g$ is a newform. In particular when $\mf g$ is cuspidal, we quote \cite[Th\'{e}or\'{e}me~16, Prop.~18]{serre1} for $\kappa \ge 2$ and \cite[Th\'{e}or\'{e}me~4.2~(ii)]{serre2} for $\kappa=1$. For our purposes we need analogous statements about the quantity $\Pi^*_{\mf g}(L;X)$. Namely we want to show that for some constants $\rho_j , \alpha >0$ depending only on $\mf g$ and $L$ the following hold:
\begin{align} \label{piasymp}
\# \Pi^*_{\mf g}(L;X) \sim \begin{cases} \rho_1 X &\text{ if } \mf g \text{ is cuspidal, not of CM type, } \kappa \geq 2, \\
  \rho_2 \frac{X}{(\log X)^{1/2}} &\text{ if } \mf g \text{ is cuspidal, of CM type, } \kappa \geq 2,  \\
  \rho_3 \frac{X}{(\log X)^{\alpha}} &\text{ if }  \mf g \text{ is cuspidal, } \kappa =1, \\
 \rho_4 X &\text{ if } \mf g \text{ is an Eisenstein newform, } \kappa \ge 2.   \end{cases}
\end{align}

These bounds have been proved in \cite{serre1, serre2} for $\Pi_{\mf g}(L;X)$. We need to address two additional points in our case, namely: 

(a) Serre states his results for $L=1$, but we require them to hold for any fixed $L \ge 1$,

(b) we have to count odd, square-free integers, i.e., consider the quantity $\Pi^*_{\mf g}(L;X)$.

We show below how to adapt Serre's arguments in a simple manner to deal with (a) and (b). For this we refer the reader to \cite[\S~1, \S~2]{serre2}, and follow the arguments presented therein.

Let $P_{\mf g}(L) := \{ p \nmid L \, \mid a_{\mf g}(p) = 0 \}$ and $\Pi^*_{\mf g}(L) := \{ (n, L)=1, n \text{ square-free} \, \mid a_{\mf g}(n) = 0 \}$.

Then the generating function (Dirichlet series) $\mc F(s)$ of $ \Pi^*_{\mf g}(L) $ is just 
\[ 
\mc F(s)  = \sum_{n \not \in \Pi_{\mf g}^* (L)} n^{-s}  = \prod_{p \not \in  P_{\mf g}(L) } (1+p^{-s}) = \sum_{n \ge 1} b_n n^{-s}, \, \text{  say}.
\] 
Clearly $\Pi^*_{\mf g}(L;X) = \sum_{n \le X} b_n$. Further, the set $P_{\mf g}(L) $ is "Frobenian" and has natural (and hence analytic or Dirichlet) density $\alpha :=\alpha(\mf g)$ such that $0 \leq \alpha <1$. Moreover if $\mf g$ has CM., then $\alpha=1/2$ and if $\kappa \ge 2$ and $\mf g$ doesnot have CM., then $\alpha=0$. For these facts see e.g. \cite[\S~7.4, p.~178]{serre1} and \cite[\S~7.5, p.~180]{serre1} respectively.

$\mc F(s)$ is holomorphic in the region $\Re(s)>1$ and is non-zero there.
We can then write
\[ \log \mc F(s) = \sum_{p \not \in  P_{\mf g}(L)}  p^{-s} + \theta_1(s) = (1-\alpha)\cdot \frac{1}{s-1} + \theta_2(s); \]
where $\theta_1(s), \theta_2(s)$ are holomorphic in $\Re(s) \ge 1$. This can be seen e.g. from \cite[(1.6)]{serre2}. This gives us that
\[ \mc F(s) = \frac{1}{(s-1)^{1-\alpha}} \cdot \exp (\theta_2(s)), \q\q (\Re(s) \ge 1). \]
Since $\mc F(s)$ has non-negative Dirichlet coefficients, our desired results follow from the (generalised) Ikehara-Weiner theorem \cite[(2.7)]{serre2}, \cite{D} and we conclude that
\[ \Pi^*_{\mf g}(L;X) = \sum_{n \le X} b_n \sim c \cdot \frac{X}{(\log X)^\alpha}, \q\q (c>0). \]
The assertions in \eqref{piasymp} now follow from the different values of $\alpha$.

From \eqref{piineq}, and from the discussion above, we now arrive at the following theorem.

\begin{thm} \label{partb-m}
Let $\kappa \in \mf Z$, $N \ge 1$ and $ h \in M_{\kappa}(\Gamma(N))$ be such that $a_{ f}(n)=0$ for all $n$ such that $(n,N)>1$. Then there exist infinitely many odd and square-free $n$ such that $a_{ f}(n) \neq 0$. More precisely, for some $0<\alpha<1$,
\[ \# \{ n \le X, \, n \text{ odd, square-free } |\,  a_{ h}(n) \neq 0 \} \gg_{h} \begin{cases}  
\frac{X}{ (\log X)^{1/2}  } &\text{ if } h \text{ is cuspidal, } \kappa \ge 2,\\ 
\frac{X}{(\log X)^{\alpha}} &\text{ if }  h \text{ is cuspidal, }  \kappa =1,\\
X &\text{ if } h \text{ is not cuspidal, } \kappa \geq 2. \end{cases}
 \]
\end{thm}

\subsection{Conclusion of the proof of \thmref{mainthm}} \label{concl} 
As mentioned in the introduction, we prove \thmref{mainthm} by induction on $n$. When $n=1$ and $\rho$ is as in the statement of the theorem, we can write $F=(f_1,\ldots, f_m)$, where $m=\dim \rho$; and each $f_j$ is a modular form of some weight $k_j \ge 0$. Moreover at least one of the $f_j \neq 0$. To prove \thmref{mainthm} for $n=1$, it is therefore enough to prove \thmref{mainthm} for any of the non-zero $f_j$. This in turn follows directly from \thmref{partb-m}. 

We next move to treat higher degrees and assume that $n>1$.

We first demonstrate the proof of the first two lower bounds of $\# \mk S_F(X)$ in \thmref{mainthm}.
We start with a {\bf \textsl{vector-valued}} $F\ne 0$ of degree $n$ with the given condition on the weight that $ k(\rho) - n/2 \ge \varrho(n)$ and consider the non-zero modular form $F^o \in M^{n-1}_{\rho''}$ (with $\rho''$ denoting the representation appearing in \propref{vec}) of weight $k(\rho'') \geq k(\rho)$. To apply the induction hypothesis to $F^o$ we need to verify that $k(\rho'') - (n-1)/2 \ge \varrho(n-1)$. Indeed this follows from the above inequality and the hypothesis on $F$. Now using \corref{nton1}, we get hold of a scalar-valued Jacobi form $\varphi^{(r)}_T \ne0 $ ($T \in \Lambda^+_{n-1}$) which is a Jacobi form of weight $ k' \ge k(\rho)$ and is a certain vector component of a (vector-valued) Fourier-Jacobi coefficient $\varphi_T$ of $F$. Here $T$ has odd, square-free discriminant. 

Now we again invoke the fact that $k(\rho) - \frac{n}{2} \ge \varrho(n)$, so that $\kappa := k' - (n-1)/2 \ge k(\rho) - (n-1)/2 \ge \varrho(n)+1/2$ and hence that $\kappa \geq 5/2$ since $\kappa$ is half-integral when $n$ is even. This allows us to use the results of section~\ref{anaB}.  If $n$ is odd, we get $\kappa \geq 2$ by a similar reasoning, this puts us into the setting of section~\ref{ell-ncusp}.
See \rmkref{wts} for more on the restriction on the weights.

Then we pass on to the realm of elliptic modular forms by using \propref{parta}, by getting hold of a primitive theta component $h_\mu$ of $\varphi^{(r)}_T$ of weight $\kappa$. In fact we work with its close relative $H_\mu$ (cf. \eqref{Hmu}) and then use \thmref{partb-c} and \thmref{partb-m} to get suitable nonvanishing properties of its Fourier coefficients. In the vector-valued setting that we are in, let us point out that even when we start with a \textsl{non-cuspidal} form $F$, it is not clear to us how ensure that the scalar-valued $\varphi^{(r)}_T$ is also non-cuspidal. This is highly probable, but we cannot prove it. One can prove this however if we start from a non-cuspidal scalar-valued modular form, see below. Therefore the first lower bound in \thmref{mainthm} is actually the infimum of the lower bounds appearing in \thmref{partb-m}.

Thus we can demonstrate (via \propref{parta}) the requisite nonvanishing properties of the Fourier coefficients of $F$. This finishes the proof of the first two lower bounds of $\# \mk S_F(X)$ in \thmref{mainthm}.

To demonstrate the proof of the last lower bound in \thmref{mainthm}, we start with a {\bf \textsl{scalar-valued}} $F \neq 0$. We then consider $G:= \Phi(F) \in M^{n-1}_k$, where $\Phi$ is the Siegel's $\Phi$-operator (see \eqref{phiop}). Now the first lower bound in \thmref{mainthm} gives us at least one $M \in \Lambda^+_{n-1}$ such that $d_M$ is odd, square-free and,
\begin{equation} \label{phincusp}
a_{\Phi(F)}(M) = a_F (\begin{psmallmatrix} M & 0 \\ 0 & 0     \end{psmallmatrix}) \neq 0.
\end{equation}
If the Fourier Jacobi coefficients of $F$ are denoted by $\varphi_T$ ($T \in \Lambda_{n-1}$), then \eqref{phincusp} clearly implies that $\varphi_M \neq 0$ and that $\varphi_M$ is non-cuspidal. Therefore by \lemref{jncusp} and \propref{primh}, $\varphi_M$ has a non-zero \textsl{non-cuspidal} primitive theta component $h_\mu$. From this point on the demonstration proceeds exactly as described in the first part of this proof. We apply \thmref{partb-m} here. The restriction on the weight comes from \lemref{jncusp}. 
\QEDB

\begin{rmk}[Condition on weights] \label{wts}
Let us remark here that the condition on the weight, namely $ k(\rho) - \frac{n}{2} \ge \varrho(n)$ in \thmref{mainthm} is technical, and is used at many places in section~\ref{anaB}. Let us put $\ell:= k(\rho) -\frac{n-1}{2}$. When $n$ is odd, so that $\ell$ is an integer, the bound $\ell \ge 2$ is enough. When $n$ is even, so that $\ell$ is a half-integer, the bound $\ell \ge 5/2$ is needed in order to invoke results from \cite{saha}. 

To be more precise let us point out that the results of subsections~\ref{ell-icusp} and~\ref{ell-hcusp} are valid for $\kappa $ (= weight of the cusp form considered there) at least $1$ and $5/2$ respectively. The lower bound $5/2$ can be improved to $3/2$ if one uses the results of \cite{yli}, however one has to ensure that we do not encounter unary theta series of weight $3/2$. This is an interesting thing to consider. Whereas in section~\ref{ell-ncusp} we request $\kappa \ge 1$ in the cuspidal case and $\kappa \geq 2$ otherwise. The second condition above perhaps can be relaxed.

We note here that in the scalar-valued {\it cuspidal} case it is necessary that $k - \frac{n}{2} \ge 0$, otherwise $F$ is singular. Thus our condition on $k$ implies that $F$ is not singular. Moreover our result is false for the small weights: eg. when $k=n/2 +1$. Counterexamples are furnished by the theta series $\vartheta \in S^n_{n/2+1}$ given as
\[   
\vartheta (Z) = \sum_{X \in M_n(\mf Z)} \det(X) e(S[X] Z) 
\]
where $S$ is even unimodular and does not have an automorphism with $\det = -1$. In particular it is not clear whether our theorem would hold for $k = (n+1)/2$. Similar remarks as above apply to the vector-valued case as well.

In the {\it non-cuspidal} case as well our theorems may not hold for small weights; e.g., the classical theta function of weight $n/2$ defined as $\vartheta$ above, but with the $\det(X)$ removed, is a counterexample.
\end{rmk}

\section{\texorpdfstring{Refinement for prime disciminants and an application to the spinor $L$-function}{} } \label{app}
In this section we first show how a variant of our main result (\thmref{mainthm}) can be used to refine it to a statement about ``\textsl{prime discriminants}". This is explicitly stated below. Let $\mf P$ denote the set of all primes.

\begin{thm} \label{oddthm}
Let $n$ be odd. Let $F \in S^n_\rho$ be non-zero and $k(\rho) - \frac{n}{2} \ge 3/2 $. Then there exist $T \in \Lambda_n^+$ with $d_T$ assuming infinitely many odd prime values, such that $a_F(T)\neq 0$. Moreover, the following stronger quantitative result holds:
\begin{align*}
\# \left( {\mk S}_F (X) \cap \mf P \right) \gg X/\log X,
\end{align*}
where the implied constant depends only on $F$.
\end{thm}

We now show how to use the above theorem (for $n=3$) along with the work of A. Pollack \cite{Pol} to obtain the standard analytic properties (meromorphic continuation, functional equation etc.) of the spinor $L$-function $Z_F(s)$ of a holomorphic Siegel cuspidal eigenform $F$ on $\spt$ unconditionally (cf. \thmref{poll} which was stated in the Introduction). We briefly discuss the background behind this result. Pollack used the correspondence between ternary quadratic forms and quaternion algebras to study a certain Rankin-Selberg integral (with respect to a suitable Eisenstein series) indexed by orders in quaternion algebras (or equivalently by some $T \in \Lambda_3^+$). This integral could be evaluated by unfolding using an expression for the spinor $L$-function as a Dirichlet series (essentially due to Evdokimov \cite{evdo}); with a factor $a_F(T)$ in the front: where $T$ corresponds to a maximal order in the quaternion algebra in question. The moment one knows that $a_F(T) \ne 0$, one can read off the analytic properties of $Z_F(s)$ from those of the Eisenstein series in question. This is what we are going to do in this section. But first, let us postpone the proof of \thmref{oddthm} and show how one can obtain \thmref{poll} from it.

\subsection{Proof of \thmref{poll}}
In view of \cite[Theorem~1.2]{Pol} and \thmref{oddthm}, it is enough to check that $T \in \Lambda^+_3$ with $d_T=p$ ($p$ odd prime) defines a maximal order in a quaternion algebra (necessarily) ramified at $\infty$ since $T$ is positive-definite.

Since the correspondence between $\Lambda^+_3$ and orders in quaternion algebras (see e.g. \cite[Proposition~3.3]{Pol} or \cite[Chapter~22]{voi}) preserves discriminants (cf. \cite[Corollary~3.4]{Pol}); if $T$ corresponds to an order $\mc O_T$ in some quaternion algebra $Q$ over $\mbb Q$ ramified at $\infty$, then $d_T=p=|rd(\mc O_T)|$. Here $rd(\mc O_T)$ denotes the reduced discriminant of $\mc O_T$. This implies that $\mc O_T$ is a maximal order. Indeed, if $\mc O_T \subset \mc O$ for some \textsl{maximal} order $\mc O$ of $Q$, then $|rd(\mc O)| \big{|} |rd(\mc O_T)|=p$. However $rd(\mc O)$ can not be $1$ as it must be ramified at another finite place (viz. $p$) since the number of ramified places is even. Thus $|rd(\mc O)|=|rd(\mc O_T)|$, i.e., $\mc O = \mc O_T$ and $\mc O_T$ is maximal. \qed

In order to prove \thmref{oddthm}, we need a lemma.

Let us denote the set of normalised newforms on $\Gamma_1(M)$ of weight $k$ by $\mc F^1_M$ and those on $\Gamma_0(M)$ with nebentypus $\chi$ (with $m_\chi \mid M$) by $\mc F_{M,\chi}$. We begin with a lemma on integral weight cusp forms. For $\ell \ge 1$, $f \in S_k(\Gamma_1(N))$, let $f \mid_k V_\ell (\tau):= f(\ell \tau)$. In this section we allow $k \ge 1$.

\begin{lem} \label{appln}
Let $f \in S_k(\Gamma_1(N))$ be non-zero. Suppose that $f$ does not belong to the $\mbb C$-span of $\mc F^1_M \mid_k V_\ell$ (where $M \mid N$ and $\ell \mid N/M$) with $\ell>1$. Then there exist infinitely many primes $p$ such that $a_f(p) \ne 0$. More precisely, 
\[  \# \{ p \le x, \, (p,N)=1 \mid a_f(p) \ne 0  \} \gg_{f} x/\log x.  \]
\end{lem}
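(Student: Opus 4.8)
The plan is to reduce the assertion to a second-moment estimate over primes and then feed in the prime number theorem for Rankin--Selberg $L$-functions. First I would exploit the oldform structure of $S_k(\Gamma_1(N))$: in the decomposition $S_k(\Gamma_1(N))=\bigoplus_{M\mid N}\bigoplus_{g\in\mathcal{F}^1_M}\bigoplus_{\ell\mid N/M}\mathbb{C}\,(g\mid_k V_\ell)$, write $f=\sum_{g,\ell}c_{g,\ell}\,(g\mid_k V_\ell)$. For a prime $p$ with $(p,N)=1$ one has $a_{g\mid_k V_\ell}(p)=0$ whenever $\ell>1$, since $\ell\mid N$ forces $\ell\nmid p$; hence $a_f(p)=a_F(p)$ for all such $p$, where $F:=\sum_g c_{g,1}\,g$ is a finite $\mathbb{C}$-linear combination of pairwise distinct newforms of levels dividing $N$. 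The hypothesis that $f$ lies outside the span of the $g\mid_k V_\ell$ with $\ell>1$ is, by linear independence of newforms (Atkin--Lehner--Li), exactly the statement that $F\neq 0$, i.e.\ that $c_{g_0,1}\neq 0$ for some newform $g_0$.

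Next I would analyse the second moment over primes
\begin{align*}
S(x) &:= \sum_{p\le x,\,(p,N)=1}\Bigl|\sum_g c_{g,1}\,\lambda_g(p)\Bigr|^2 \\
&= \sum_{g,g'} c_{g,1}\,\overline{c_{g',1}}\sum_{p\le x,\,(p,N)=1}\lambda_g(p)\,\overline{\lambda_{g'}(p)},
\end{align*}
where $\lambda_g(p):=a_g(p)\,p^{-(k-1)/2}$, so that $|\lambda_g(p)|\le 2$ by Deligne's bound (Deligne--Serre if $k=1$). For the diagonal terms $g=g'$, the Rankin--Selberg convolution $L(s,g\times\bar g)=\zeta(s)L(s,\mathrm{Sym}^2 g)$ has a simple pole at $s=1$, so $\sum_{p\le x}|\lambda_g(p)|^2\sim\pi(x)$. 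For $g\neq g'$ the two newforms correspond to distinct cuspidal automorphic representations, hence $L(s,g\times\bar{g'})$ is holomorphic and non-vanishing at $s=1$; together with the standard zero-free region this gives $\sum_{p\le x}\lambda_g(p)\,\overline{\lambda_{g'}(p)}=o(\pi(x))$. Consequently $S(x)\sim\bigl(\sum_g|c_{g,1}|^2\bigr)\pi(x)\gg_f x/\log x$, the positivity being guaranteed by $c_{g_0,1}\neq 0$.

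Finally I would combine this lower bound with the trivial upper bound
\[
S(x)\ \le\ \Bigl(2\sum_g|c_{g,1}|\Bigr)^{2}\cdot\#\{\,p\le x:\ (p,N)=1,\ a_f(p)\neq 0\,\},
\]
which again uses $|\lambda_g(p)|\le 2$ together with $a_f(p)=a_F(p)$ for $(p,N)=1$; this yields $\#\{\,p\le x:\ a_f(p)\neq 0\,\}\ \ge\ \#\{\,p\le x:\ (p,N)=1,\ a_f(p)\neq 0\,\}\ \gg_f\ x/\log x$, as claimed. The main technical input — and the step to be cited with care — is the off-diagonal estimate: one must know that the finitely many newforms occurring in $F$ really do generate pairwise non-isomorphic automorphic representations (so that no spurious pole of $L(s,g\times\bar{g'})$ appears, even when $g$ and $g'$ have different levels or are Galois or complex conjugates of one another), and that the prime number theorem for these Rankin--Selberg $L$-functions applies with enough uniformity across the bounded list of forms; both are classical, but the $\Gamma_1$-setting and the bookkeeping over levels are what need to be written out.
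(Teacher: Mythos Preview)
Your proposal is correct and follows essentially the same route as the paper: newform decomposition, reduction to the second moment $\sum_{p\le x}|a_f(p)|^2$ over primes coprime to $N$, and evaluation via Rankin--Selberg (diagonal gives the main term, off-diagonal is $o$-small). The only cosmetic differences are that the paper weights the prime sum by $\log p$ and appeals explicitly to the Ikehara--Wiener Tauberian theorem applied to $L'/L(\mk f\otimes\overline{\mk g},s)$ (together with Brumley's non-vanishing of $L(\mk f\otimes\overline{\mk g},s)$ on $\Re(s)=1$) rather than citing a packaged prime number theorem for Rankin--Selberg $L$-functions; your final step invoking Deligne's bound to pass from the second moment to the count is in fact more explicit than the paper's ``immediately implies''.
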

The proof of the above lemma is based on the celebrated Ikehara-Wiener theorem on Dirichlet series with non-negative coefficients, which we recall next.

\begin{thm}[Ikehara-Wiener, \cite{MM}] \label{ikwe}
Let $A(s)= \sum_{n=1}^\infty a_n n^{-s}$ be a Dirichlet series. Suppose there exists another Dirichlet series $B(s)= \sum_{n=1}^\infty b_n n^{-s}$ with $b_n \ge 0$ such that

(a) $|a_n| \le b_n$,

(b) $B(s)$ converges in $\Re(s)>1$,

(c) $B(s)$ (respectively $A(s)$) can be extended meromorphically to $\Re(s) \ge 1$ having no poles except (resp. except possibly) for a simple pole at $s=1$ with residue $R \ge 0$ (resp. $r \ge 0$). Then,
\[ \sum_{n \le x} a_n = rx + o(x); \q (ii) \sum_{n \le x} b_n = Rx + o(x). \]
\end{thm}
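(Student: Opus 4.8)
The plan is to derive both assertions from the classical Wiener--Ikehara Tauberian theorem in its one-series form, which I shall call the \emph{core statement}: if $C(s)=\sum_{n\ge 1}c_n n^{-s}$ with $c_n\ge 0$ converges for $\Re(s)>1$ and continues meromorphically to $\Re(s)\ge 1$ with no pole there except possibly a simple pole at $s=1$ of residue $\varrho\ge 0$, then $\sum_{n\le x}c_n=\varrho x+o(x)$. Granting the core statement, the conclusion $\sum_{n\le x}b_n=Rx+o(x)$ follows at once with $c_n=b_n$ and $\varrho=R$. For the conclusion about $\sum_{n\le x}a_n$, write $a_n=\alpha_n+i\beta_n$ with $\alpha_n,\beta_n$ real; since $|\alpha_n|,|\beta_n|\le|a_n|\le b_n$, the four sequences $b_n\pm\alpha_n$ and $b_n\pm\beta_n$ are non-negative, and their Dirichlet series are $B(s)\pm\tfrac12\big(A(s)+\widetilde A(s)\big)$ and $B(s)\pm\tfrac1{2i}\big(A(s)-\widetilde A(s)\big)$, where $\widetilde A(s)=\sum\bar a_n n^{-s}$. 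Each of these converges for $\Re(s)>1$ (its coefficients are bounded by $2b_n$) and continues meromorphically to $\Re(s)\ge 1$ with at most a simple pole at $s=1$; and from $|A(\sigma)|\le B(\sigma)$ as $\sigma\to 1^+$ one gets $|r|\le R$, so the residues $R\pm\Re r$, $R\pm\Im r$ are all $\ge 0$. Applying the core statement to each and subtracting in pairs gives $\sum_{n\le x}\alpha_n=(\Re r)x+o(x)$ and $\sum_{n\le x}\beta_n=(\Im r)x+o(x)$, hence $\sum_{n\le x}a_n=rx+o(x)$.

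It remains to sketch the core statement, for which I would use the standard Fourier-analytic argument. Put $T(x)=\sum_{n\le x}c_n$ and $F(u)=e^{-u}T(e^u)$, so that $e^uF(u)=T(e^u)$ is non-decreasing. Abel summation gives $\int_0^\infty F(u)e^{-(s-1)u}\,du=C(s)/s$ for $\Re(s)>1$; hence $G(s):=C(s)/s-\varrho/(s-1)$ continues holomorphically to a neighbourhood of $\{\Re(s)\ge 1\}$ and $\int_0^\infty\big(F(u)-\varrho\big)e^{-(s-1)u}\,du=G(s)$ there, the integral being absolutely convergent for each fixed $s$ with $\Re(s)>1$ because $C(\sigma)<\infty$ for $\sigma>1$ forces $T(x)\le C(\sigma)x^\sigma$, i.e.\ $F(u)=O(e^{\epsilon u})$ for every $\epsilon>0$.

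Next I would, for each $\lambda>0$, integrate $G(1+\epsilon+it)$ against the Fej\'er weight $1-|t|/(2\lambda)$ over $|t|\le 2\lambda$. For $\epsilon>0$, Fubini and Fourier inversion transform this into $\int_0^\infty\big(F(u)-\varrho\big)e^{-\epsilon u}\,\lambda H(\lambda(v-u))\,du$, where $H(\eta)=\pi^{-1}(\sin\eta/\eta)^2\ge 0$ with $\int_{\mbb R}H=1$. Letting $\epsilon\downarrow 0$: the left side converges (by continuity of $G$ on $\Re(s)=1$ and compactness of the $t$-range) to a finite quantity $J_\lambda(v)$, which tends to $0$ as $v\to\infty$ by the Riemann--Lebesgue lemma; on the right side the $\varrho$-term tends to $\varrho\int_0^\infty\lambda H(\lambda(v-u))\,du$ and the $F$-term, since $F\ge 0$, increases by monotone convergence to $\int_0^\infty F(u)\,\lambda H(\lambda(v-u))\,du$. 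Rearranging and letting $v\to\infty$ shows $\int_0^\infty F(u)\,\lambda H(\lambda(v-u))\,du\to\varrho$ for each fixed $\lambda$. Finally I would invoke the monotonicity of $e^uF(u)$ as the Tauberian side condition: for small $\delta>0$ one has $F(u)\le e^{\delta}F(w)$ on $[w-\delta,w]$ and $F(u)\ge e^{-\delta}F(w)$ on $[w,w+\delta]$, so centring the kernel at $w\pm\delta/2$ and using $\int_{|\eta|>\lambda\delta/2}H\to 0$ as $\lambda\to\infty$ yields first $\limsup_{v\to\infty}F(v)\le\varrho$ (whence $F$ is bounded), then $\liminf_{v\to\infty}F(v)\ge\varrho$, and on letting $\delta\to 0$ we conclude $F(v)\to\varrho$, i.e.\ $T(x)=\varrho x+o(x)$.

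The hard part is the double limit $\epsilon\downarrow 0$ then $v\to\infty$ in the Fourier step: one must exchange these limits knowing only one-sided information about $F$ (non-negativity, plus monotonicity of $e^uF(u)$), and this is exactly where the Fej\'er kernel, monotone convergence, and the Riemann--Lebesgue lemma are indispensable. The rest --- the reduction to the four auxiliary series, the inequality $|r|\le R$, and the final localisation estimate --- is routine bookkeeping.
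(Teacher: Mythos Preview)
The paper does not prove this theorem at all: it is stated with attribution to \cite{MM} (Murty--Murty, \emph{Non-vanishing of $L$-functions and applications}) and then simply \emph{used} in the proof of \lemref{appln}. So there is no ``paper's own proof'' to compare against; the authors treat Ikehara--Wiener as a black box from the literature.

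Your sketch is nonetheless a correct rendition of the standard argument. The reduction of the relative (two-series) version to the one-series core via the four auxiliary non-negative series $b_n\pm\alpha_n$, $b_n\pm\beta_n$ is exactly how the extension is usually handled, and the inequality $|r|\le R$ ensuring non-negativity of the residues is right. Your outline of the core statement is the classical Wiener--Ikehara proof via the Fej\'er kernel: Laplace representation of $C(s)/s$, smoothing against a compactly supported weight in $t$, the $\epsilon\downarrow 0$ passage by monotone convergence on the positive part and continuity of $G$ on the line, Riemann--Lebesgue for the $v\to\infty$ limit, and the final Tauberian localisation using monotonicity of $e^uF(u)$. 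One small point worth tightening if you were to write this out in full: you assert that $G(s)=C(s)/s-\varrho/(s-1)$ continues holomorphically to a \emph{neighbourhood} of $\{\Re(s)\ge 1\}$, but the hypothesis only gives continuation to the closed half-plane $\Re(s)\ge 1$ itself; the standard proof only needs continuity of $G(1+it)$ on compact $t$-intervals, which you do in fact use, so this is a wording issue rather than a gap.
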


\begin{proof}[Proof of \lemref{appln}]
Since $\mc F^1_M = \cup_\chi \mc F_{M,\chi}$ where $\chi$ runs over Dirichlet characters $\bmod M$ such that $m_\chi | M$, we can write, 
\begin{equation} \label{expan}
f(z)= \sum_\chi \sum_M \sum_\ell c_{\chi,M,\ell} \, f_{M}(\ell z),
\end{equation}
where $\chi$ runs over Dirichlet characters $\bmod N$, $M$ runs over divisors of $N$ such that $m_\chi \mid M$, $f_M$ runs over $\mc F_{M,\chi}$, $\ell$ runs over the divisors of $N/M$; and $c_{\chi,M,\ell} $ are scalars. By our assumption, there exist an $\chi,M$ such that $c_{\chi,M,1} \neq 0$.

From \eqref{expan}, it follows immediately that for primes $(p,N)=1$ we can write
\begin{equation}
a_f(p) = \sum_{\mk f} c_{\mk f} \lambda_{\mk f}(p)
\end{equation}
where $\mk f$ runs over the set $\cup_{\chi ,M} \mc F_{M, \chi}$, with $\chi$ and $M$ varying as above; and not all the $c_{\mk f}$ are zero. The point to note here is that all the $\mk f$ are newforms of level dividing $N$.

By a theorem of Shahidi \cite{Sha}, we know that the Rankin-Selberg convolution (in the sense of Langlands) $L(\mk f \otimes \overline{\mk g}, s) \ne 0$ on the line $\Re(s)=1$, if $\mk f \ne {\mk g}$. Moreover, it is classical (see e.g., \cite{Iwa}) that $L(\mk f \otimes \overline{\mk g}, s)$ is analytic in $\mf C$ except for a simple pole with positive residue at $s=1$ if and only if $\mk f = {\mk g}$.

Then we compute
\begin{equation}
\begin{aligned}
\sum_{p \le x, \, p \nmid N} |a_{\mk f}(p)|^2 \log p &= \sum_{\mk f} |c_{\mk f}|^2\sum_{p \le x, \, p \nmid N} |\lambda_{\mk f} (p)|^2 \log p + \sum_{\mk f \ne \mk g} c_{\mk f} \overline{c_{\mk g} }\sum_{p \le x, \, p \nmid N} \lambda_{\mk f} (p) \overline{\lambda_{\mk g} (p)}  \log p \n \\
&= \sum_{\mk f} |c_{\mk f}|^2\sum_{p \le x,\, p \nmid N} |\lambda_{\mk f \otimes \overline{\mk f}} (p)| \log p + \sum_{\mk f \ne \mk g} c_{\mk f} \overline{c_{\mk g} } \sum_{p \le x,\, p \nmid N} \lambda_{\mk f \otimes \overline{\mk g}} (p) \log p  \label{lincom}
\end{aligned}
\end{equation}

We would now appeal to (a relative version of) the Ikehara-Wiener theorem (see \thmref{ikwe}) applied to the Dirichlet series defined by the logarithmic derivatives 
$\frac{L'}{L}(\mk f \otimes \overline{\mk f},s)$ and then to $\frac{L'}{L}(\mk f \otimes \overline{\mk g},s)$. Let us denote the Satake parameters of $\mk f$ and $\mk g$ at a prime $p$ (which we suppress mostly) by $\{a_{i,p}\}$ and $\{b_{j,p}\}$ respectively. We drop the suffix $p$ from $\{a_{i,p}\}, \{b_{i,p}\}$ for convenience.

Further let us put
\begin{equation}
\frac{L'}{L}(\mk f \otimes \overline{\mk g},s) = \sum_{n=1}^\infty \Lambda_{\mk f \otimes \overline{\mk g}} (n) n^{-s};
\end{equation}
where
\[  \Lambda_{\mk f \otimes \overline{\mk g}} (n) = \begin{cases} (\sum_{i,j} a_i^m \overline{b_j}^m) \log p, \text{  if  } n = p^m \\ 0 \text{  otherwise}.    \end{cases}   \]

Thus by the Cauchy-Schwarz inequality, we see that
\begin{equation}
|\Lambda_{\mk f \otimes \overline{\mk g}} (n)| \le \frac{1}{2} \left(  \Lambda_{\mk f \otimes \overline{\mk f}} (n) + \Lambda_{\mk g \otimes \overline{\mk g}} (n)   \right),
\end{equation}
and moreover $\Lambda_{\mk f \otimes \overline{\mk f}} (n) = | \sum_i a_i^m|^2 \ge 0$ for all $n$. 

Thus all the conditions of the Ikehara-Wiener theorem (\thmref{ikwe}) for $\frac{L'}{L}(\mk f \otimes \overline{\mk f},s)$ and $\frac{L'}{L}(\mk f \otimes \overline{\mk g},s)$ are satisfied, and we have
\begin{equation} \label{asympIK}
(i) \,  \sum_{n \le x} \Lambda_{\mk f \otimes \overline{\mk f}} (n) = x + o(x), \q (ii) \, \sum_{n \le x} \Lambda_{\mk f \otimes \overline{\mk g}} (n) =  o(x)
\end{equation}
as $x \to \infty$.

It is then easy to finish the proof by noting that
\begin{equation} \label{pdom}
\sum_{n \le x} \Lambda_{\mk f \otimes \overline{\mk g}} (n) = \sum_{p \le x} \Lambda_{\mk f \otimes \overline{\mk g}} (p) +O(x^{1/2} \log x)
\end{equation}
and $\Lambda_{\mk f \otimes \overline{\mk g}} (p) = \lambda_{\mk f \otimes \overline{\mk g}} (p) \log p$. Indeed combining \eqref{lincom}, \eqref{asympIK} and \eqref{pdom} we get
\begin{equation} \label{asympgen}
\sum_{p \le x, (p,N)=1} |a_{\mk f}(p)|^2 \log p= \sum_{\mk f} |c_{\mk f}|^2 x + o_{\mk f}(x) ,
\end{equation}
as $x \to \infty$, and where $\sum_{\mk f} |c_{\mk f}|^2 >0$.
This immediately implies the assertion of \lemref{appln}.
\end{proof}

\begin{rmk} One may look for a better error term in \eqref{asympgen} from the point of view of analytic number theory. This maybe obtained (using the same arguments as above) by using the prime number theorem for the Rankin-Selberg $L$-functions $L(\mk f \otimes \overline{\mk g},s)$. However one has to be careful about Siegel zeros in case $L(\mk f \otimes \overline{\mk g},s)$ has a quadratic Dirichlet $L$-function as a factor.
\end{rmk}

\subsection{Proof of \thmref{oddthm}} First we note that in the statement of \propref{parta}, we can replace the condition: ``\textsl{odd and square-free}" in the second part of the proposition ``\textsl{by odd and prime}".

We appeal to \corref{nton1} (which is now unconditional, as we have proved \thmref{mainthm}) to conclude that our $F \in S^n_\rho$ has infinitely many non-zero Fourier-Jacobi coefficients $\phi_T$ ($T \in \Lambda^+_{n-1}$) with $d_T$ odd and square-free. Then \propref{parta} gives us the non-zero cusp form $H_\mu$ ($\mu$ primitive) as defined just before \propref{parta}. Let us keep the notation used there.

From now on we assume that $n$ is \textsl{odd}. Now let us observe that \lemref{appln} can be applied to $H_\mu \in S_{k'-\frac{n-1}{2}}(\Gamma_1(d'^2))$ (recall that $d' = d_T$ in the present case and $k'-\frac{n-1}{2} \in \mf N$) because its Fourier expansion is supported on indices which are co-prime to $d_T$ by the primitiveness of $\mu$. This in turn implies by old-form theory that it can not entirely lie in the old-space (see \cite[Theorem~4.6.8]{miyake}), and hence must have a non-zero new component.

Thus by the first paragraph of this section, we conclude that when $n$ is odd, there exist infinitely many odd primes $p$ such that for each such $p$,
$F$ has at least one non-zero Fourier coefficient $a_F(\mc T)$ with $d_{\mc T}=p$. The quantitative version follows immediately from the corresponding statement of \lemref{appln}. {\qed}

\begin{rmk}
It is desirable to prove an analogue of \thmref{oddthm} for $n$ even; however, the necessary properties of elliptic modular forms of half-integral weight (i.e., a suitable version of \lemref{appln}) seems not to be available.
\end{rmk}

\section{Appendix} \label{appn}
The reader may have noticed that while treating the analytic part, we have used two methods. As mentioned in the introduction, both have their advantages and limitations. The purpose of the appendix is to lay down the first steps of an asymptotic analysis by the extended Rankin-Selberg method which is adapted to treat non-cusp forms for both integral and half-integral weights. We show that the residue at the \textsl{rightmost} pole of Rankin-Selberg $L$-series of two not necessarily cusp forms defines an inner product on the space of Eisenstein series (!), see Definition~\ref{innp}. This is new to our knowledge; and has the potential to generalise to higher degrees and may have other applications.
Perhaps equally interesting is the intermediate result about detecting cuspidality from the growth of Fourier coefficients of modular forms of half-integral weights on any congruence subgroup, see \lemref{half-hecke}.

\textsl{We let $\kappa \in \tfrac{1}{2} \mf Z$ unless stated otherwise}. 

We aim to be a completely general in our statements, and so we start with $\mk f \in M_{\kappa}(\Gamma(N'))$ ($N'$ as in \eqref{n'}). By the theory of invariant differential operators and the Rankin-Selberg method, for $\mk f, \mk g$ as above, one can obtain a meromorphic continuation of the Rankin-Selberg convolution series (defined initially for $\Re(s) \gg 1$)
\begin{displaymath} \label{rfg}
 \mc R(\mk f, \mk g;s) = \sum_{n \geq 1} a_{\mk f}(n) \overline{a_{\mk g}(n) } n^{-s}.
\end{displaymath}
In particular, one obtains by the usual unfolding method the following integral representation (see eg. \cite[(3.5)]{Bo-Ch})
\begin{align}
\int_{\mc F_{N'}} E_{N'}(\tau, s+ 1 - \kappa)  R(\Im(\tau)^\kappa \mk f(\tau)  \overline{\mk g(\tau)}  ) d^*\tau
= \frac{2^{1-s} }{\pi^{ns} {N'}^{s+2} } s (s+1-2 \kappa) \, \Gamma(s)  \mc R (\mk f,\mk g;s) , \label{int}
\end{align}
where $E_{N'}$ is the real analytic Eisenstein series at $\infty$ for the group $\Gamma(N')$ (see e.g., \cite{Iw1}, \cite[(5.1.8)]{Ran}), $R$ denotes a `growth-killing' $\mrm{SL}(2, \mf R)$-invariant differential operator defined on two real analytic functions $\mf f$ and $\mf g$ by
\begin{displaymath} \label{Rdiffop}
{R}(v^{\kappa} \mf f \cdot\mf g)= 4 \mf f'\cdot \bar{ \mf g'}v^{\kappa+2} +
2i \kappa(\mf f'\cdot\bar{\mf g}- \mf f \cdot \bar{\mf g'})v^{\kappa+1} \qq \qq (v = \Im(\tau))
\end{displaymath}
and $d^*\tau$ denotes the invariant measure on $\mf H$.

From \cite{Iw1} we note that the only pole of $E_{N'}$ is at $s=1$ and is simple with residue a constant function of $\tau$. Now \eqref{int} suggests that $\mc R(\mk f, \mk f;s)$ has a (simple) pole at $s=2\kappa-1$. We confirm this in \propref{2k-1} below.

The assertion about the pole is not clear immediately from the integral representation, and we take an indirect approach to prove it. The following lemma comes in handy for that. It seems this lemma is not available in the literature in the generality that we want, especially when the weight is half-integral, see e.g., \cite{ch-ko}.

\begin{lem} \label{half-hecke}
Let $N \ge 1, \kappa \geq 3/2$ and $\mk f \in \mN$ be non-zero. If $|a_{\mk f}(n)| \ll_{\mk f} n^{c}$ for any fixed $c < \kappa-1$. Then $\mk f$ must be a cusp form.
\end{lem}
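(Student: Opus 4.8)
The plan is to prove the contrapositive: if $\mk f \in \mN$ is not a cusp form, then its Fourier coefficients cannot satisfy $|a_{\mk f}(n)| \ll_{\mk f} n^c$ for any $c < \kappa - 1$. Equivalently, I want to show that the growth exponent of the coefficients of a non-cuspidal form is at least $\kappa - 1$. The natural tool is the Rankin--Selberg machinery already set up in \eqref{int}: the Dirichlet series $\mc R(\mk f, \mk f; s) = \sum_{n \ge 1} |a_{\mk f}(n)|^2 n^{-s}$ must have a pole at $s = 2\kappa - 1$ whenever $\mk f \notin S_\kappa(\Gamma(N'))$ (this is exactly \propref{2k-1}, which the excerpt promises and which I may invoke). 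Indeed, if $\mk f$ is non-cuspidal, then $\Im(\tau)^\kappa |\mk f(\tau)|^2$ does not decay rapidly at some cusp, so when we unfold against the Eisenstein series $E_{N'}(\tau, s+1-\kappa)$, the pole of $E_{N'}$ at its argument $=1$, i.e. at $s = 2\kappa - 1$, is not cancelled by the differential operator $R$ (the operator $R$ kills the constant terms of $\mk f$ only up to order zero, so the relevant boundary contribution survives). Hence the right-hand side of \eqref{int}, and therefore $\mc R(\mk f, \mk f; s)$, genuinely has a (simple) pole at $s = 2\kappa - 1$.

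Granting that pole, I would argue as follows. Suppose for contradiction that $|a_{\mk f}(n)| \ll_{\mk f} n^c$ with $c < \kappa - 1$. Then $|a_{\mk f}(n)|^2 \ll n^{2c}$ with $2c < 2\kappa - 2 < 2\kappa - 1$, so the Dirichlet series $\sum_{n} |a_{\mk f}(n)|^2 n^{-s}$ converges absolutely (and locally uniformly) in the half-plane $\Re(s) > 2c + 1$. Since $2c + 1 < 2\kappa - 1$, this region of absolute convergence contains the point $s = 2\kappa - 1$; in particular $\mc R(\mk f, \mk f; s)$ is holomorphic at $s = 2\kappa - 1$. This contradicts the existence of the pole there, and the contradiction forces $\mk f$ to be a cusp form.

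A couple of technical points require care. First, I must make sure the various analytic factors multiplying $\mc R(\mk f, \mk f; s)$ on the right-hand side of \eqref{int} — namely $2^{1-s}\pi^{-ns}{N'}^{-s-2} s(s+1-2\kappa)\Gamma(s)$ — are finite and nonzero at $s = 2\kappa - 1$, so that the pole of the integral side really is inherited by $\mc R$ and not swallowed or introduced by these factors; since $\kappa \ge 3/2$ we have $2\kappa - 1 \ge 2$, so $\Gamma(s)$ is finite and nonzero, and $s(s+1-2\kappa) = (2\kappa-1)\cdot 0$ vanishes, meaning in fact I should locate the pole one step more carefully — the factor $(s + 1 - 2\kappa)$ has a simple zero at $s = 2\kappa - 1$, so for the identity to produce a pole of $\mc R$ there, the integral on the left must have a \emph{double} pole, or else $\mc R$ has no pole at $2\kappa-1$ but rather the analysis shows non-rapid growth differently; this is precisely the subtlety that \propref{2k-1} is stated to resolve, and I would simply quote its conclusion that $\mc R(\mk f, \mk f; s)$ has a pole at $s = 2\kappa - 1$ for non-cuspidal $\mk f$. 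Second, the case $\mk f \in \mN$ rather than on $\Gamma_0(N)$ with nebentypus means one should work with the full principal congruence subgroup, but the Eisenstein series $E_{N'}$ and the unfolding identity \eqref{int} are already stated in that generality, so no extra work is needed. \textbf{The main obstacle} is establishing \propref{2k-1} itself — i.e., showing rigorously that the pole at $s = 2\kappa - 1$ is present (not cancelled) for an arbitrary non-cuspidal form of half-integral weight on $\Gamma(N')$ — but since the excerpt defers that to a separate proposition which I am entitled to assume, the proof of this lemma proper reduces to the short convergence-versus-pole contradiction sketched above.
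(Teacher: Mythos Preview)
Your proposal is circular. You plan to invoke \propref{2k-1} (the pole of $\mc R(\mk f,\mk f;s)$ at $s=2\kappa-1$ for non-cuspidal $\mk f$) to derive the lemma, but in the paper \propref{2k-1} is proved \emph{using} \lemref{half-hecke}: its argument is that if there were no pole at $2\kappa-1$, Landau's theorem would give convergence for $\Re(s)>\kappa$, hence a coefficient bound with exponent $c<\kappa-1$, which \lemref{half-hecke} would then convert into cuspidality, a contradiction. So you are not ``entitled to assume'' \propref{2k-1}; it sits logically downstream of the lemma you are trying to prove. You yourself noticed the real obstruction here: in \eqref{int} the factor $(s+1-2\kappa)$ on the right has a simple zero at $s=2\kappa-1$, while the integral on the left (after applying the growth-killing operator $R$, which makes the integrand decay) only picks up a \emph{simple} pole from $E_{N'}$. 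So \eqref{int} by itself does not yield a pole of $\mc R$ at $2\kappa-1$; establishing that pole requires an independent input, and in the paper that input is precisely \lemref{half-hecke}.

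The paper's actual proof is completely different and avoids the Rankin--Selberg machinery. For integral $\kappa$ it quotes \cite{BD1}. For half-integral $\kappa=k+\tfrac12$ it reduces to the integral case by multiplying by the classical theta series: set $\mk g(\tau)=\mk f(4N\tau)\cdot\theta(\tau)\in M_{k+1}(\Gamma_1(16N^2))$. The convolution formula $a_{\mk g}(n)=\sum_{\ell+m^2=n}a_{\mk f}(\ell)$ together with the count $\#\{(\ell,m):\ell+m^2=n\}\ll n^{1/2}$ gives $|a_{\mk g}(n)|\ll n^{c+1/2}$, and $c+\tfrac12<\kappa-\tfrac12=k=(k+1)-1$, so the integral-weight result forces $\mk g$ to be cuspidal. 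Finally one checks, by inspecting the explicit $q$-expansions of $\theta$ at the three cusps of $\Gamma_0(4)$, that cuspidality of $\mk g$ forces cuspidality of $\mk f(4N\tau)$ and hence of $\mk f$.
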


\begin{proof}
If $\kappa \in \mf N$, then this follows directly from \cite[Theorem~1.2]{BD1}. So assume $\kappa = k+1/2 \in \frac{1}{2} \mf Z \setminus \mf Z$ and let $\mk f$ be as in the statement of the lemma. Consider the modular form (recall that $N'=4N$ in this case)
\[  \mk g(\tau):= \mk f(4N \tau) \cdot \theta(\tau) \in M_{k+1}(\Gamma_1(16N^2)),  \]
where $\theta(\tau) = \sum_{n \in \mf Z} q^{n^2}$. Thus if $\mk f(\tau) = \sum_{n \geq 1} a_{\mk f}(n)q^{n/4N}$, then from the definition of $\mk g$,
\[ a_{\mk g}(n) = \sum_{ \ell +m^2=n} a_{\mk f}(\ell) . \]
Since $\{ (\ell,m) | \ell +m^2=n \} \ll n^{1/2}$, by using the bound $|a_{\mk f}(n)| \ll_f n^{c}$ we easily get that
\[  |a_{\mk g}(n)| \ll_{\mk g} n^{c+1/2}. \]

That $\mk g$ is a cusp form now follows again from \cite[Theorem~1.2]{BD1} noting that $k+1 \geq 2$. We claim  this implies that ${\mk f}_{4N} (\tau):= \mk f(4N \tau) \in M_{k+1/2}(\Gamma_1(16N^2))$ is also a cusp form. It is enough to show that the order of vanishing $\mrm{ord}_{s} ({\mk f}_{4N})$ at any cusp $s$ of $\Gamma_1(16N^2)$ is positive.

To see this, first note that at the cusps $\infty, 0, 1/2$ of $\Gamma_0(4)$,
the theta function looks like $\sum_{n \in \mf Z} q^{n^2}$, $\sum_{n \in \mf Z} q^{n^2/4}$, $q^{1/4} \sum_{n \in \mf Z} q^{n^2+n}$ respectively, possibly up to some non-zero constants. 
Since any cusp of $\Gamma_1(16N^2)$ is $\Gamma_0(4)$ equivalent to one of the cusps of $\Gamma_0(4)$, and $\theta$ is automorphic under $\Gamma_0(4)$, we can write
\[ 1 \leq \mrm{ord}_{s} (\mk g) =  \mrm{ord}_{s} ({\mk f}_{4N})  + \mrm{ord}_{l(s)} (\theta) ,\]
where $l(s) \in \{ \infty, 0, 1/2 \}$. When $l(s)=\infty, 0$, we see that $\mrm{ord}_{l(s)} (\theta)=0$
and the                                                                                                                                                                                                                                                                                                                                                                                                                                                                                                                                                                                                                                                                                                                                                                                                                                                                                                                                                                                                                                                                                                                                                                                                                                                                                                                                                                                                                                                                                                                                                                                                                                                                                                                                                                                                                                                                                                                                                                                                                                                                                                                                                                                                                                                                                                                                                                                                                                                                                                                                                                                   assertion about $\mk f_{4N}$ is clear.  When $l(s) =1/2$, we see that 
\[    \mrm{ord}_{s} ({\mk f}_{4N})  \ge 1- \mrm{ord}_{1/2} (\theta) =3/4.\]
This implies that $\mk f_{4N}$ is cuspidal and hence so is $\mk f$. 
\end{proof}

\begin{prop} \label{2k-1}
Let $\mk f \in \mN$ be non-zero. Assume that $\mk f$ is not a cusp form and that $\kappa \ge 3/2$. Then the Rankin-Selberg series $\mc R(\mk f,\mk f;s)$ has a simple pole with positive residue at $s=2 \kappa-1$ and 
\begin{equation} \label{asymp}
\sum_{n \leq x} |a_{\mk f}(n)|^2 = c'_{\mk f} x^{2\kappa-1} +O_{\mk f}(x^\eta) 
\end{equation}
for some constant $c'_{\mk f}>0$ not depending on $x$.
\end{prop}

\begin{proof}
First of all from \eqref{int}, it is clear that $\mc R(\mk f,\mk f;s)$ has meromorphic continuation to $\mf C$ with the exception of possible simple poles at $s=\kappa,2\kappa-1$. We claim that there must be a pole at $s=2 \kappa-1$. If not, then by Landau's theorem on Dirichlet series with non-negative coefficients, the Dirichlet series representing $\mc R(\mk f,\mk f;s)$ would actually converge for all $\sigma > \kappa$. Thus $|a_{\mk g}(n)|= O(n^{c})$ for any $c$ such that $\kappa/2<c$. In particular considering such a $c$ satisfying $c< \kappa-1$ violates \lemref{half-hecke}. 

Therefore by a version of Landau's theorem we would get an asymptotic formula of the kind provided we knew a `generalized' functional equation for $\mc R^*(\mk f,\mk f;s)$, where $\mc R^*(\mk f,\mk f;s)$
denotes the completion of $\mc R(\mk f,\mk f;s)$ defined by (cf. \cite[Rmk.~3.3]{Bo-Ch} taking $l=0$ there, also note that $\kappa \in \mf N$ there, but the same argument works in our case, see below)
\begin{equation*}
{\mc R}^*(\mk f,\mk f;s) = 2^{1-s} \pi^{-s} \Gamma(s) \xi(2s+2-2\kappa) {\mc R}(\mk f,\mk f;s) .
\end{equation*}
Here $\xi(s) = \pi^{-s/2} \Gamma(s/2) \zeta(s)$. If we further put 
\begin{equation} \label{phidef}
 \phi(s):= \zeta(2s+2-2\kappa)  {\mc R}(\mk f,\mk f;s) = \sum_{n \ge 1} b_n n^{-s},  
\end{equation}
then clearly $\phi(s)$ has non-negative Dirichlet coefficients and 
\[  {\mc R}^*(\mk f,\mk f;s) = 2^{1-s} \pi^{1-\kappa}  \Gamma(s) \Gamma(s+1-\kappa) \phi(s). \]
$\phi(s)$ satisfies a functional equation relating ${\mc R}^*(\mk f,\mk f; 2 \kappa-1- s)z$ with a linear combination of similar objects arising from Rankin-Selberg series of $\mk f$ at different cusps. See \cite[Thm.~4~part~(iv)]{Ran1} for the case of cusp forms, and the comments below.

We now invoke the asymptotic result from Chandrasekharan-Narashimhan \cite[Theorem~4.1]{chandra} to get
\begin{equation} \label{bn}
\sum_{n \le x} b_n =  \sum_{\text{all poles}} \mathrm{Res} \left( \frac{\phi(s) x^s}{s} \right) +O_{\mk f}(x^\eta)  ,
\end{equation}
where `all poles' means contributions from all the poles of $\phi(s)$, and $\eta<2 \kappa-1$.
Note that in the notation of \cite{chandra}, we have the central point of the functional equation to be $ \kappa-1/2$ and the rightmost abscissa of a pole of $\phi(s)$ to be $2\kappa-1$.

Indeed by Selberg's remarks (see \cite[comment after~(1.15)]{Selb}) and \cite[Remark~B.]{Ran1}, their method would apply to the function $R(\Im(\tau)^\kappa |\mk f(\tau)|^2)$ since it has exponential decay and it boils down to the functional equation of $E_{N'}$ (which comes from the scattering matrix for $\Gamma(N')$); which fortunately has been worked out by Rankin (see \cite[Thm.~4~part~(iv)]{Ran1}). This gives the analytic continuation and functional equation of $\mc R^*(\mk f,\mk f;s)$ via \eqref{int} (see also \cite{Bo-Ch}). Finally in order to use the result from \cite{chandra}, we require that $\phi(s)$ has finitely many poles: this is clear. For the asymptotic property, one could also consult \cite[Section~5.3]{Ran1} and the treatment in our case would be the same.

Clearly the `leading term' in \eqref{bn} is of the form $c_{\mk f} x^{2\kappa-1}$ and $c_{\mk f}$ is necessarily non-zero since there is a simple pole at $s=2\kappa-1$. It must be positive, since the L.H.S. of \eqref{bn} is strictly positive for large $x$. Finally it is standard (see the calculation on \cite[p.364-365]{Ran1}) that one can deduce the following asymptotic (for some $c'_{\mk f}>0$) for the second moment of the Fourier coefficients of $\mk f$ using the relation between $b_n$ and $a_{\mk f}(n)$ from \eqref{phidef}. This gives us
\begin{equation*} 
\sum_{n \leq x} |a_{\mk f}(n)|^2 = c'_{\mk f} \cdot x^{2\kappa-1} +O_{\mk f}(x^\eta) 
\end{equation*} 
as desired. The proposition is now proved.
\end{proof}

\begin{defi} \label{innp}
For $\mk f,\mk g \in \mN$, let us define a pairing $\{\mk f,\mk g\}$ by putting
\begin{displaymath}
\{\mk f, \mk g\} := \mrm{res}_{s=2\kappa-1} \mc R(\mk f, \mk g,s),
\end{displaymath}
\end{defi}
\noindent where we again denote by $\mc R(\mk f, \mk g,s)$ the analytic continuation (via \eqref{int}) of the Dirichlet series defined in \eqref{rfg}. Clearly the pairing $\{\mk f,\mk g\}$ is Hermitian and by \propref{2k-1}, is positive definite on the space $\mc E_\kappa(N')$ of Eisenstein series, which we can define as the orthogonal complement of the space of cusp forms in $\mN$. Thus it defines an inner product on $\mc E_\kappa(N')$. In fact $\{ \mk f, \mk f\}>0$ for any non-cusp form.

Moreover let us note that if $\{\mk f, \mk g \}=0$ if at least one of $\mk f, \mk g$ is cuspidal, this follows (see eg. \cite[p.~804]{Bo-Ch}) from the fact that the associated Rankin-Selberg $L$-series atmost a simple pole at $s=k$. In the passing let us also mention that $\{\mk f, \mk g \}$ should not be confused with the extended bilinear pairing $(\mk f, \mk g )=\mrm{res}_{s=\kappa} \mc R(\mk f, \mk g,s)$. It is well-known that $(\mk f, \mk g )$ must have a non-trivial signature in general (see eg. \cite[Rmk.~5.7]{Bo-Ch}).

The inner product $\{ \cdot , \cdot \}$ is not $\sltwor$-invariant, unfortunately. Nor is the classical Hecke-basis (as in \cite{weis}) orthogonal with respect to it (cf. \cite[p.~820]{Bo-Ch}). However for each $m \ge 1$, there must exist an element $\mk E_m \in \mc E_\kappa(N')$ which is dual to the linear functional $\mk f \mapsto a_{\mk f}(m)$ -- with properties analogous those of the classical Poincar\'{e} series in the space of cusp forms. In particular it may enable one to prove an analogue of the asymptotic result in \cite{DI}, which would then be sufficient to treat the case $n$ even in \thmref{mainthm}. It will be interesting to investigate this further.

\end{document}